\newtheorem{thm}{Theorem}[section]
\newtheorem{cor}[thm]{Corollary}
\newtheorem{lem}[thm]{Lemma}
\newtheorem{prop}[thm]{Proposition}
\theoremstyle{definition}
\newtheorem{defn}[thm]{Definition}
\newtheorem{conj}[thm]{Conjecture}
\newtheorem{ex}[thm]{Examples}
\newtheorem{example}[thm]{Example}
\theoremstyle{remark}
\newtheorem{rem}[thm]{Remark}
\numberwithin{equation}{section}
\font \rus= wncyr10
\newcommand{\Z}{\mathbb Z}
\newcommand{\C}{\mathbb C}
\newcommand{\Pro}{\mathbb P}
\newcommand{\R}{\mathbb R}
\newcommand{\N}{\mathbb N}
\newcommand{\Q}{\mathbb Q}
\newcommand{\Hom}{\hbox{Hom}}
\newcommand{\I}{\mathbb{I}}
\newcommand{\To}{\longrightarrow}
\newcommand{\sha}{\, \hbox{\rus x} \,}
\newcommand{\Mod}{\mathfrak{M}}\newcommand{\Modf}{\mathfrak{M}^\delta}
\newcommand{\Sym}{\mathfrak{S}}
\newcommand{\ord}{\mathrm{ord}}
\newcommand{\A}{\mathbb{A}}
\newcommand{\Res}{\mathrm{Res}}
\newcommand{\PSL}{\mathrm{PSL}}
\newcommand{\gr}{\mathrm{gr}}
\newcommand{\MT}{\mathcal{MT}}
\newcommand{\FMT}{\mathcal{M}}
\newcommand{\F}{\mathfrak{F}}
\newcommand{\cfl}{\langle}
\newcommand{\cfr}{\rangle}
\newcommand{\Lie}{\mathfrak{L}}
\newcommand{\spc}{\, \, ; \,\,}
\begin{document}
\title{The algebra of cell-zeta values}
\author{Francis Brown, Sarah Carr, Leila Schneps}
\maketitle

\begin{abstract}
In this paper, we introduce {\it cell-forms} on $\Mod_{0,n}$,
which are top-dimensional differential forms diverging along the 
boundary of exactly one cell (connected component) of the real moduli space
$\Mod_{0,n}(\R)$. We show that the cell-forms generate the top-dimensional
cohomology group of $\Mod_{0,n}$, so that there is a natural duality
between cells and cell-forms. In the heart of the paper, we determine
an explicit basis for the subspace of differential forms which 
converge along a given cell $X$.   The elements of this basis are
called {\it insertion forms}, their integrals over $X$ are real
numbers, called {\it cell-zeta values}, which generate a $\Q$-algebra
called the cell-zeta algebra.  By a result of F. Brown,
the cell-zeta algebra is equal to the algebra of multizeta values.
The cell-zeta values satisfy a family of simple quadratic relations coming
from the geometry of moduli spaces, which leads to a natural definition
of a formal version of the cell-zeta algebra, conjecturally isomorphic to
the formal multizeta algebra defined by the much-studied double shuffle
relations.
\end{abstract}

\noindent Mathematics Subject Index 2000: 11Y40, 14Q15, 68W30

\noindent Keywords and phrases: Multiple zeta values, Lie algebras, cohomology, moduli spaces,
polygons

\section{Introduction}

Let $n_1,\ldots, n_r\in \N$ and suppose that $n_r\geq 2$. The
multiple zeta values (MZV's)
\begin{equation}\label{MZV}
\zeta(n_1,\ldots, n_r) = \sum_{0<k_1<\ldots<k_r} {1 \over k_1^{n_1}
\ldots k_r^{n_r}}\in \R\ ,\end{equation} were first defined by
Euler, and have recently acquired much importance in their relation
to mixed Tate motives. It is conjectured that the periods of all
mixed Tate motives over $\Z$ are expressible in terms of such
numbers. By an observation due to Kontsevich, every multiple zeta value
can be written as an iterated integral:
\begin{equation}\label{itint}
\int_{0\leq t_1\leq \ldots \leq t_\ell \leq 1} {dt_1 \ldots dt_\ell
\over (\varepsilon_{1}-t_1)\ldots(\varepsilon_\ell-t_\ell) }\ ,
\end{equation} where $\varepsilon_i \in \{0,1\}$, and
$\varepsilon_1=1$ and $\varepsilon_\ell=0$ to ensure convergence,
and $\ell=n_1+\cdots +n_r$. The iterated integral $(\ref{itint})$ can
be considered as a period on $\Mod_{0,n}$ (with $n=\ell+3$), or 
a period of the motivic fundamental group of
$\Mod_{0,4}=\Pro^1\backslash\{0,1,\infty\}$, whose de Rham
cohomology $H^1(\Mod_{0,4})$ is spanned by the forms ${dt \over t}$
and ${dt \over 1-t}$ \cite{De1}, \cite{D-G}. One proves that the multiple
zeta values satisfy two sets of quadratic relations \cite{Ch1}, \cite{Ho},
known as the regularised double shuffle relations, and it has been
conjectured that these generate all algebraic relations between
MZV's \cite{Ca2}, \cite{W}.  This is the  traditional point
of view on multiple zeta values.

On the other hand, by  a general construction due to Beilinson, one
can view the iterated integral $(\ref{itint})$ as a period integral
in the ordinary sense, but this time  of the $\ell$-dimensional
affine scheme
$$\Mod_{0,n} \simeq (\Mod_{0,4})^\ell \backslash \{\hbox{diagonals}\} = \{(t_1,\ldots, t_\ell): t_i\neq 0,1\ , t_i\neq t_j\}\ ,$$
where $n=\ell+3$. This is the moduli space of curves of genus $0$
with $n$ ordered marked points. Indeed, the open domain of
integration $X=\{0< t_1< \ldots < t_\ell < 1\}$ is one of the
connected components of the set of real points $\Mod_{0,n}(\R)$, and
the integrand of $(\ref{itint})$ is a regular algebraic form in
$H^\ell(\Mod_{0,n})$ which converges on $X$.  Thus, the study of
multiple zeta values leads naturally to the study of all periods on
$\Mod_{0,n}$, which was initiated by Goncharov and Manin
\cite{Br2}, \cite{G-M}. These periods can be written
\begin{equation}\label{introint}
\int_X \omega\ , \quad \hbox{ where } \omega \in H^\ell(\Mod_{0,n})
\hbox{ has no poles along } \overline{X}\ .\end{equation}
  The
general philosophy of motives and their periods \cite{Ko-Za}
indicates that one should study relations between all such
integrals.  This leads to the following problems:
\begin{enumerate}
  \item Construct a good basis of all logarithmic
  $\ell$-forms $\omega$ in $H^\ell(\Mod_{0,n})$ whose integral over the cell
  $X$ converges.
  \item Find all relations between the integrals $\int_{X} \omega$
  which arise from natural geometric considerations on the moduli spaces
  $\Mod_{0,n}$.
\end{enumerate}
In this paper, we give an explicit solution to $(1)$, and a family
of relations which conjecturally answers $(2)$. Firstly, we give an explicit
description of a basis of the subspace of $H^\ell(\Mod_{0,n})$ of
forms convergent on the standard cell, in terms of the combinatorics of 
polygons.  (Note that the idea of connecting differential forms with 
combinatorial structures has previously been explored from different aspects,
in \cite{G-G-L} and \cite{Te2} for example.)  The corresponding integrals are 
more general than 
(\ref{itint}), although Brown's theorem \cite{Br2} proves that they do occur 
as $\Q$-linear combinations of multiple zeta values of the form (\ref{itint}).

For $(2)$, we explore a new family of quadratic relations, which we call 
{\it product map relations}, because they arise from products of forgetful 
maps between moduli spaces.   To this family we add two other simpler
families; one arising from the dihedral subgroup of automorphisms of 
$\Mod_{0,n}$ which stabilise $X$, and the other from a basic identity
in the combinatorics of polygons.  These families are sufficiently 
intrinsic and general to motivate the following conjecture, which we have
verified computationally up through $n=9$.

\vspace{.3cm}
\noindent {\bf Conjecture.} {\it The three families of relations between 
integrals (given explicitly in definition \ref{threerels}) 
generate the complete set of 
relations between periods of the moduli spaces $\Mod_{0,n}$.}
\vskip .3cm
\noindent {\it Acknowledgements:} The authors wish to extend warm thanks
to the referee, who made a remarkable effort and provided numerous helpful
observations.

\subsection{Main results}\label{thing}
We give a brief presentation of the main objects introduced in this
paper, and the results obtained using them. \vskip .2cm

Recall that Deligne-Mumford constructed a stable compactification $\overline\Mod_{0,n}$ of
$\Mod_{0,n}$, such that $\overline\Mod_{0,n}\setminus\Mod_{0,n}$ is
a smooth normal crossing divisor whose irreducible components
correspond bijectively to partitions of the set of $n$ marked points
into two subsets of cardinal $\ge 2$ \cite{D-M}, \cite{Kn}. The real part
$\Mod_{0,n}(\R)$ of $\Mod_{0,n}$ is not connected, but has $n!/2n$
connected components (open cells) corresponding to the different
cyclic orders of the real points $0,t_1,\ldots,t_\ell,1,\infty\in
{\mathbb P}^1(\R)$, up to dihedral permutation \cite{Dev1}. 
Thus, we can identify cells with $n$-sided polygons with edges
labeled by $\{0,t_1,\ldots, t_\ell, 1, \infty\}$.  In the
compactification $\overline{\Mod}_{0,n}(\R)$, the closed cells have
the structure of associahedra or Stasheff polytopes; the boundary of
a given cell is a union of irreducible divisors corresponding to
partitions given by the chords (cf. definition \ref{defchords})
in the  associated polygon. The
standard cell is the cell corresponding to the standard order we
denote $\delta$, given by $0<t_1<\ldots<t_\ell<1$.   We write
$\Mod_{0,n}^\delta$ for the space 
$$\overline\Mod_{0,n}\setminus \{\mbox{all boundary
divisors of }\Mod_{0,n}\mbox{ except those bounding the standard cell}\}.$$
This is a smooth affine scheme introduced in \cite{Br2}.

\subsubsection{Polygons.} Since a cell of $\Mod_{0,n}(\R)$ is given by
an ordering of $\{0,t_1,\ldots,t_\ell,1,\infty\}$ up to dihedral
permutation, we can identify it as above with an unoriented
$n$-sided polygon with edges indexed by the set
$\{0,t_1,\ldots,t_\ell,1,\infty\}$.  

\subsubsection{Cell-forms.}  A cell-form is a holomorphic differential
$\ell$-form on $\Mod_{0,n}$ with logarithmic singularities along the
boundary components of the stable compactification, having the
property that its singular locus forms the boundary of a single cell
in  the real moduli space $\Mod_{0,n}(\R)$.

Up to sign, the cell-form
diverging on a given cell is obtained by taking the successive
differences of the edges of the polygon representing that cell
(ignoring $\infty$) as factors in the denominator.  For example
the cell corresponding to the cyclic order $(0,1,t_1,t_3,\infty,t_2)$
is represented by the polygon on the left of the following figure,
and the cell-form diverging along it is given on the right:\vskip 1.5cm
$\displaystyle{\qquad\qquad\qquad\qquad \qquad\qquad\qquad
\longleftrightarrow \qquad\qquad \pm\,{{dt_1dt_2dt_3}\over
{(t_1-1)(t_3-t_1)(-t_2)}}}$

\vspace{-2cm}
\ \ \ \ \epsfxsize=3cm\epsfbox{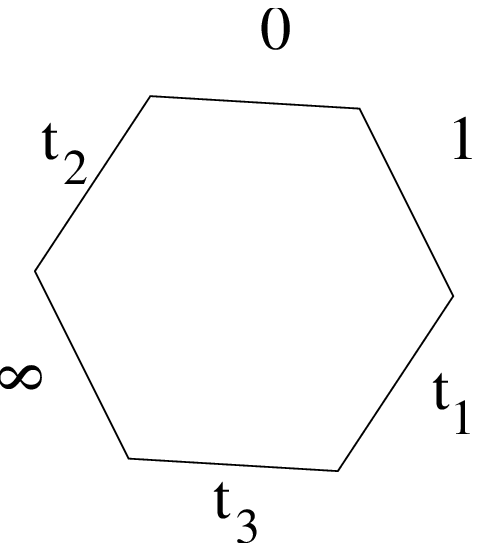}

\vspace{.1cm} 
Let ${\cal P}$ denote the $\Q$-vector space
generated by \emph{oriented} $n$-gons indexed by
$\{0,1,t_1,$ $\ldots,t_\ell,1,\infty\}$. The orientation fixes the sign
of the  corresponding cell form, and this  gives a map
\begin{equation}\label{introrhodef}
\rho:{\cal P}\rightarrow H^\ell(\Mod_{0,n}).\end{equation} In
proposition \ref{prop41} of section \ref{sec41} we prove that this map is surjective and
identify its kernel.  Chapter 3 is entirely devoted to a purely
combinatorial reformulation, in terms of polygons which simultaneously
represent both cells and cell-forms on moduli space, of the familiar
notions of convergence, divergence and residues of differential forms
along divisors.
\subsubsection{Cell-form cohomology basis.} 
We show that cell-forms provide a good framework for studying the logarithmic 
differential forms on $\Mod_{0,n}$, starting with the following result
(theorem \ref{thm01cellsspan}), whose proof is based
on Arnol'd's well-known construction of a different basis for the
cohomology group $H^\ell(\Mod_{0,n})$.

\vspace{.3cm}
\noindent {\bf Theorem.} {\it The set of $01$ cell-forms (those 
corresponding to polygons in which $0$ appears next to $1$ in the indexing 
of the edges) forms a basis for the cohomology group 
$H^\ell(\Mod_{0,n})$ of top-dimensional differential forms on the moduli space.}

\vspace{.3cm}
In particular, this shows that the cohomology group $H^\ell(\Mod_{0,n})$
is canonically isomorphic to the subspace of ${\cal P}$ of polygons having 
$0$ adjacent to $1$, providing a new approach.
\subsubsection{Insertion forms.} Insertion forms (definition \ref{definsforms})
are particular linear combinations of $01$ cell-forms having the 
property given in the following theorem (theorem \ref{insformsspan}), 
one of the main results of this paper.

\vspace{.3cm}
\noindent {\bf Theorem.} {\it The insertion forms 
form a basis for the space of top-dimensional logarithmic differential forms 
which converge on the closure of the standard cell of $\Mod_{0,n}(\R)$.}

\vspace{.3cm}In other words, insertion forms give a basis for the cohomology group 
$H^\ell(\Mod_{0,n}^\delta)$ of (classes of) forms having no poles along the 
boundary of the standard cell of $\Mod_{0,n}(\R)$, so that
the integral $(\ref{introint})$ converges, yielding a period.  

The insertion forms are defined in definition \ref{definsforms}, but the 
definition is based on the essential construction of Lyndon insertion words 
given in definition \ref{insw} and studied throughout section \ref{Lyndins}. 
The proof of this theorem uses all the polygon machinery developed in chapter 3.

\vspace{.3cm}

\subsubsection{Cell-zeta values.}  These are real numbers obtained by  
integrating insertion forms over the standard cell as in $(\ref{introint})$.
They are a generalization of multiple zeta values to a larger set of
periods on $\Mod_{0,n}$, such as
$$\int_{0<t_1<t_2<t_3<1} {{dt_1dt_2dt_3}\over{(1-t_1)(t_3-t_1)t_2}}.$$
Note that unlike the multiple zeta values, this is not an iterated integral
as in (\ref{itint}).
\subsubsection{Product map relations between cell-zeta values.} Via the pullback, the maps
$f:\Mod_{0,n}\rightarrow \Mod_{0,r}\times \Mod_{0,s}$ obtained by
forgetting disjoint complementary subsets of the marked points
$t_1,\ldots,t_\ell$ yield expressions for products of cell-zeta
values on $\Mod_{0,r}$ and $\Mod_{0,s}$ as linear combinations of
cell-zeta values on $\Mod_{0,n}$:
\begin{equation}\label{introcellprodmap}
\int_{X_1} \omega_1\int_{X_2}\omega_2=\int_{f^{-1}(X_1\times X_2)}
f^*(\omega_1\wedge\omega_2).\end{equation} There is a simple
combinatorial algorithm to compute the multiplication law in terms
of cell-forms. This is a geometric analog of the familiar quadratic
relations for multiple zeta values, and is  explained in section
\ref{prodmaps}.
\subsubsection{Dihedral relations between cell-zeta values} These relations between
cell-zeta values are given by \begin{equation}
\label{introdihedralrel}\int_X\omega=\int_X \sigma^*(\omega)\ ,
\end{equation} where $\sigma$ is an automorphism of $\Mod_{0,n}$ which
maps the standard cell to itself: $\sigma(X)=X$, and thus $\sigma$
is a dihedral permutation of the marked points $\{0,1,t_1,\ldots,
t_\ell, \infty\}$.

\subsubsection{The cell-zeta value algebra ${\cal C}$.} The multiplication
laws associated to product maps $(\ref{introcellprodmap})$ make the
space of all cell-zeta values on $\Mod_{0,n}$, $n\ge 5$, into a
$\Q$-algebra which we denote by ${\cal C}$. By Brown's theorem
\cite{Br2}, which states essentially that all periods on
$\Mod_{0,n}$ are linear combinations of multiple zeta values,
together with Kontsevitch's expression (\ref{itint}) of multiple
zeta values, we obtain the following result (theorem \ref{brownsthesis}).

\vspace{.3cm}
\noindent {\bf Theorem.} {\it The cell-zeta value algebra ${\cal C}$ is 
equal to the algebra of multiple zeta values ${\cal Z}$.}

\vspace{.1cm}
\subsubsection{The formal cell-zeta value algebra ${\cal FC}$.}
By lifting the previous constructions to the level of polygons along
the map $(\ref{introrhodef})$, we define in section \ref{cellalg} an
algebra of  formal cell-zeta values which we denote by ${\cal FC}$.
It is   generated by the {\it Lyndon insertion words} (see definition
\ref{insw}), which are formal
sums of polygons corresponding to the insertion forms introduced
above, subject to combinatorial versions of the product map
relations $(\ref{introcellprodmap})$ and the dihedral relations
$(\ref{introdihedralrel})$.  We consider this analogous
to the formal multizeta algebra ${\cal FZ}$, generated by formal symbols
representing convergent multiple zeta values, subject only to the 
convergent double shuffle and Hoffmann relations (\cite{Ho}).  The computer calculations 
in low weight described in chapter 4 motivated us to make the following 
conjecture, which essentially says that the product map and dihedral
relations (plus another simple family coming from combinatorial identities
on polygons, see definition \ref{threerels} for the complete definition of
the three families of relations) generate all relations between periods of
the moduli space.
 
\vspace {.3cm}
\noindent {\bf Conjecture.} {\it The formal cell-zeta algebra ${\cal FC}$
is isomorphic to the formal multizeta algebra ${\cal FZ}$.}

\vspace{.5cm} The paper is organized as follows. In $\S2$,  we
introduce cell forms and polygons and define the three familes of relations. 
In  $\S3$, we  define Lyndon insertion words of polygons, which may be of 
independent combinatorial interest.  These are used to construct the insertion 
basis of convergent forms in $\S4$. In $\S\ref{calculations}$, we give 
complete computations of  this basis and the  corresponding product map
relations for $\Mod_{0,n}$, where $n=5,6,7$.

In the remainder of this introduction we sketch the connections
between the formal cell-zeta value algebra and standard results and
conjectures in the theory of multiple zeta values and mixed Tate
motives.

\vspace{0.2cm}
\subsection{Relation to mixed Tate motives and conjectures}
Let  $\MT(\Z)$ denote the category of mixed Tate motives which are
unramified over $\Z$ \cite{D-G}.  Let  $\delta$ denote the standard
cyclic structure on $S=\{1,\ldots,n\}$, and let $B_{\delta}$ denote
the divisor which bounds the standard cell $X$. Let
$A_\delta$ denote the set of all remaining divisors on
$\overline{\Mod}_{0,S}\backslash \Mod_{0,S}$, so that
$\Mod_{0,S}^{\delta}=\overline\Mod_{0,S}\setminus A_{\delta}$ (\cite{Br2}).
We write:
\begin{equation} \label{Mdelta}
M_\delta= H^\ell(\overline{\Mod}_{0,n}\backslash A_\delta, B_\delta
\backslash (B_\delta \cap A_\delta))\ .
\end{equation}
By a result due to Goncharov and Manin  \cite{G-M}, $M_\delta$
defines an element in $\MT(\Z)$, and therefore  is equipped with an
increasing weight filtration $W$.  They show that $\gr^W_\ell
M_\delta$ is isomorphic to the de Rham cohomology
$H^\ell(\Mod^\delta_{0,n})$, and that $\gr^W_0 M_\delta$ is
isomorphic to the dual of the relative Betti homology
$H_\ell(\overline{\Mod}_{0,n}, B_\delta )$.

Let $M$ be any element in  $\MT(\Z)$. A framing for $M$ consists of
an integer $n$ and non-zero maps
\begin{equation}
v  \in  \Hom( \Q(-n), \gr^W_{2n} M) \quad \hbox{ and } \quad f\in
 \Hom( \gr^W_{0} M, \Q(0)) \ .
\end{equation}
Two framed motives $(M,v,f)$ and $(M',v',f')$ are said to be
equivalent if there is a morphism $\phi:M\rightarrow M'$ such that
$\phi\circ v= v'$ and $f'\circ\phi =f$. This generates an
equivalence relation whose equivalence classes are denoted
$[M,v,f]$. Let $\FMT(\Z)$ denote the set of equivalence classes
of framed mixed Tate motives which are unramified over $\Z$, as
defined in \cite{Go1}. It is a commutative, graded Hopf algebra over $\Q$.

To every convergent cohomology class $\omega \in
H^\ell(\Mod^\delta_{0,n})$, we associate the following $\ell$-framed mixed
Tate motive:
\begin{equation} \label{momegadef}
m(\omega) = \big[M_\delta, \omega,[X]\big]\ ,
\end{equation}
where $[X]$ denotes the relative homology class of the
standard cell. This defines a map ${\cal FC} \rightarrow \FMT(\Z)$.
 The maximal period of $m(\omega)$ is exactly the
cell-zeta value
$$\int_{X} \omega\ .$$

\begin{prop} The dihedral symmetry relation and product map relations
are motivic. In other words,
\begin{eqnarray}
m(\sigma^*(\omega))&=& m(\omega) \nonumber \ ,\\
m(\omega_1\cdot\omega_2) & = & m(\omega_1)\otimes m(\omega_2)\ , \nonumber
\end{eqnarray}
for every dihedral symmetry $\sigma$ of $X$, and for every
modular shuffle product $\omega_1\cdot\omega_2$ of convergent forms
$\omega_1,\omega_2$ on $\Mod_{0,r}$, $\Mod_{0,s}$ respectively.
\end{prop}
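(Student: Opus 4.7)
The plan is to exhibit, for each family of relations, an explicit morphism in the category of mixed Tate motives $\MT(\Z)$ which witnesses the required equivalence of framed motives. In both cases, the geometric datum (a dihedral symmetry in one case, a forgetful map in the other) is defined at the level of varieties and is compatible with the stratification of $\overline{\Mod}_{0,n}\setminus\Mod_{0,n}$ by boundary divisors, so it induces a morphism between the relative motives of Goncharov--Manin defined in $(\ref{Mdelta})$.

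First I would dispose of the dihedral case, which is essentially automatic. A dihedral symmetry $\sigma$ that stabilizes $X$ extends to an automorphism of $\overline{\Mod}_{0,n}$, preserves the boundary divisor $B_\delta$ of $X$, and permutes the remaining divisors in $A_\delta$. It therefore restricts to an automorphism of $\overline{\Mod}_{0,n}\setminus A_\delta$ fixing $B_\delta\setminus(B_\delta\cap A_\delta)$ setwise, and induces a motivic automorphism $\phi:M_\delta\to M_\delta$. Using $\phi$ as the witnessing morphism in the definition of equivalence of framed motives, one has $\phi\circ\omega=\sigma^*(\omega)$ at the top-weight graded piece and $[X]\circ\phi=[X]$ at the bottom-weight piece (since $\sigma_*[X]=[X]$ as relative Betti homology classes), which is precisely the equivalence $m(\sigma^*(\omega))=m(\omega)$.

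Next I would handle the product map case. The forgetful map $f:\Mod_{0,n}\to \Mod_{0,r}\times \Mod_{0,s}$ extends to a morphism $\bar f:\overline{\Mod}_{0,n}\to \overline{\Mod}_{0,r}\times \overline{\Mod}_{0,s}$ of the Deligne--Mumford compactifications, and one verifies using the chord/polygon description of the boundary developed in $\S3$ that $\bar f$ sends the divisors of $A^{(n)}_\delta$ into the union $(A^{(r)}_\delta\times\overline{\Mod}_{0,s})\cup(\overline{\Mod}_{0,r}\times A^{(s)}_\delta)$, and restricts to a diffeomorphism of the standard cell $X$ onto $X_1\times X_2$. Consequently, $f^*$ induces a motivic morphism
$$\phi:M^{(r)}_\delta\otimes M^{(s)}_\delta \longrightarrow M^{(n)}_\delta$$
via the K\"unneth isomorphism in relative cohomology. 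The top framing $\omega_1\otimes\omega_2$ is sent by $\phi$ to $f^*(\omega_1\wedge\omega_2)$, which is by definition (after projecting onto the standard-cell component in $f^{-1}(X_1\times X_2)$) the modular shuffle product $\omega_1\cdot\omega_2$; and on the bottom-weight piece, the dual action of $\phi$ sends the homology framing $[X]$ to $[X_1]\otimes[X_2]$, reflecting the integration identity $(\ref{introcellprodmap})$. Taking $\phi$ as the witness in the definition of framed equivalence then yields $m(\omega_1\cdot\omega_2)=m(\omega_1)\otimes m(\omega_2)$ in $\FMT(\Z)$.

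The main obstacle is the verification, in the product map case, that $\bar f$ behaves correctly with respect to the boundary divisors defining $A_\delta$, so that $M^{(n)}_\delta$ receives a map from $M^{(r)}_\delta\otimes M^{(s)}_\delta$ identifying the framings as described; this is essentially a combinatorial statement about how the chords of an $n$-gon pull back to chords in the $r$-gon and $s$-gon under the forgetful map, together with the bookkeeping needed to identify $f^*(\omega_1\wedge\omega_2)|_X$ with $\omega_1\cdot\omega_2$ (possibly after using dihedral symmetries to transport the contributions from other components of $f^{-1}(X_1\times X_2)$ back to the standard cell). Once this combinatorial geometry is in place, the rest follows formally from the functoriality of the assignment $m(\cdot)$ and the fact that tensor product of framed motives realizes the multiplication in the Hopf algebra $\FMT(\Z)$.
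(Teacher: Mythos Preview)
The paper does not actually supply a proof of this proposition: immediately after its statement, the text simply says ``The motivic nature of our constructions will be clear from the definitions,'' and moves on. So your proposal is already more detailed than what the paper offers, and the overall strategy you outline---exhibit in each case a geometric morphism that induces a morphism of the relative motives $M_\delta$ and check that it intertwines the framings---is the correct one.

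That said, there is one imprecision in your product map argument that you should fix. You assert that $\bar f$ ``restricts to a diffeomorphism of the standard cell $X$ onto $X_1\times X_2$.'' This is false: $f|_X$ is only an open embedding, and $f^{-1}(X_1\times X_2)$ is the disjoint union of \emph{all} cells $X_\gamma$ with $\gamma\in\gamma_1\sha\gamma_2$ (cf.\ the proof of proposition~\ref{prodmaprel}). Consequently, the dual of your morphism $\phi$ does not send $[X]$ to $[X_1]\otimes[X_2]$; rather, it sends the sum $\sum_{\gamma}[X_\gamma]$ to $[X_1]\otimes[X_2]$. The way out is exactly what you gesture at in your final paragraph, but the automorphisms you need are not dihedral symmetries of the standard cell (those fix $X$); they are the general permutations $\tau_\gamma\in\Sym(n)$ of $(\ref{permaction})$ carrying $X_\gamma$ back to $X$. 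Each such $\tau_\gamma$ induces a motivic isomorphism $M_\gamma\cong M_\delta$ identifying $[X_\gamma]$ with $[X]$ and $\omega_1\sha\omega_2$ with $\tau_\gamma^{-1}(\omega_1\sha\omega_2)$, which is precisely how the product map relation of definition~\ref{prodmaprel2} is built. Once you make that correction, your argument goes through and in fact supplies the details the paper omits.
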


The motivic nature of our constructions will be clear from the
definitions. We therefore obtain a well-defined map $m$ from the
algebra of formal cell-zeta numbers ${\cal FC}$ to $\FMT(\Z)$.  
On $\Mod_{0,5}$, there is a unique element $\zeta_2\in {\cal FC}$ whose period 
is $\zeta(2)$, which maps to $0$ in $\FMT(\Z)$.

\begin{conj} ${\cal FC}$ is a free $\Q[\zeta_2]$-module, and the induced map
$$m:\mathcal{FC}/\zeta_2{\cal FC}\To \widehat\FMT(\Z)$$  
is an isomorphism.
\end{conj}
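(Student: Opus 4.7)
The plan is to reduce the conjecture to a graded dimension count matching ${\cal FC}/\zeta_2{\cal FC}$ against $\widehat\FMT(\Z)$. First I would verify that $m$ descends to a well-defined map on the quotient. The preceding proposition already handles the dihedral and product-map families of definition \ref{threerels}; the remaining combinatorial polygon family is inherited from identities that hold at the level of cell-forms in $H^\ell(\Mod_{0,n})$ under the surjection $\rho$ of (\ref{introrhodef}), and so maps to zero in $\FMT(\Z)$ for free. Passage to the quotient by $\zeta_2$ is legitimate because, as noted in the text, $\zeta_2$ itself maps to $0$ in $\FMT(\Z)$.

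Surjectivity of the induced $m$ is then essentially formal. The insertion-form basis theorem provides a representative in ${\cal FC}$ for every convergent class on $\Mod^\delta_{0,n}$. On the motivic side, Goncharov-Manin realize $\widehat\FMT(\Z)$ via framed motivic classes of the form $m(\omega)$ built from the $M_\delta$ of (\ref{Mdelta}); Brown's theorem identifies the corresponding period algebra with ${\cal Z}$, and Deligne-Goncharov show that ${\cal Z}$ exhausts the periods of $\widehat\FMT(\Z)$. Chasing through, every framed motive lies in the image of $m$.

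The hard step is injectivity. Write $c_n=\dim_\Q({\cal FC}/\zeta_2{\cal FC})_n$ and $d_n=\dim_\Q\widehat\FMT(\Z)_n$, the latter satisfying the Deligne-Goncharov recursion $d_n=d_{n-2}+d_{n-3}$, $d_0=1$, $d_1=d_2=0$. Combined with surjectivity, the isomorphism reduces to the upper bound $c_n\le d_n$. My approach would be to present ${\cal FC}/\zeta_2{\cal FC}$ as a graded commutative algebra on the Lyndon insertion words modulo the three relation families, dualize to extract a Lie-coalgebra structure, and attempt to match this with the known presentation of the motivic Lie coalgebra having one cogenerator in each odd weight $\ge 3$. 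The freeness of ${\cal FC}$ over $\Q[\zeta_2]$ is then a Hilbert-series consequence: once the quotient dimensions are pinned down, comparing graded Poincar\'e series forces multiplication by $\zeta_2$ to be injective, which for graded modules over $\Q[\zeta_2]$ with $\zeta_2$ in positive degree is equivalent to freeness.

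The principal obstacle is exactly the bound $c_n\le d_n$, which is the assertion of \emph{completeness} of the three relation families and is a geometric analog of the still-open double-shuffle completeness conjecture for ${\cal FZ}$. The computational verification in the paper only reaches $n=9$, and a structural proof would likely require genuinely new external input, such as a compatible motivic Galois action on the space of Lyndon insertion words, or a Grothendieck-Teichm\"uller-style Lie-algebraic comparison between ${\cal FC}$ and the double-shuffle Lie algebra — neither of which is currently available without developing substantial new machinery.
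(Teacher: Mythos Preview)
The statement you are attempting is labeled a \emph{Conjecture} in the paper, not a theorem; the paper offers no proof at all, only the computational verification through $n\leq 9$ mentioned immediately afterward. You recognize this yourself in your final paragraph: the completeness of the three relation families (equivalently, the upper bound $c_n\leq d_n$) is precisely the open content of the conjecture. Your outline --- well-definedness of $m$ on the quotient, surjectivity via insertion forms and Brown's theorem, then reduction of injectivity to a graded dimension count --- is a reasonable roadmap, and your honest acknowledgment that the injectivity step would require ``substantial new machinery'' is accurate. There is simply nothing in the paper to compare your proposal against.

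One minor correction: you give the initial conditions as $d_0=1$, $d_1=d_2=0$, but the paper's $d_n$ for $\widehat\FMT(\Z)$ have $d_2=1$ (see the line after (\ref{ZagierRec})). Your values would be appropriate for $\FMT(\Z)$ rather than $\widehat\FMT(\Z)$; since the conjecture as stated maps ${\cal FC}/\zeta_2{\cal FC}$ to $\widehat\FMT(\Z)$, some care with the $\zeta_2$ bookkeeping is needed on both sides --- though this may reflect an imprecision in the paper's own formulation of the target as much as anything in your write-up.
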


Since the structure of $\FMT(\Z)$ is known, we are led to more precise
conjectures on the structure of the formal cell-zeta algebra. To
motivate this, let $\Lie=\Q[e_3,e_5,\ldots, ]$ denote the free Lie
algebra generated by one element $e_{2n+1}$ in each odd degree. Set
$$\F = \Q[e_2] \oplus \Lie\ .   $$
The underlying graded vector space is generated by, in increasing
weight:
$$e_2 \spc e_3 \spc e_5 \spc e_7 \spc [e_3,e_5] \spc e_9 \spc [e_3,
e_7] \spc [e_3,[e_5,e_3]]\,
, \, e_{11} \spc [e_3,e_9]\, , \, [e_5,e_7] \spc \ldots \ .$$
Let  ${\cal U}\F$  denote the universal enveloping algebra of the
Lie algebra ${\F}$. Then, setting $\widehat\FMT(\Z)=\FMT(\Z)\otimes_\Q
\Q[\zeta_2]$, it is known that $\widehat\FMT(\Z)$ is dual to
${\cal U}\F$. From the explicit description of $\F$ given above, one
can deduce that the graded dimensions $d_k= \dim_\Q \gr^W_k
\widehat\FMT(\Z)$ satisfy Zagier's recurrence relation
\begin{equation} \label{ZagierRec}
d_k=d_{k-2}+d_{k-3}\ ,
\end{equation}
with the initial conditions $d_0=1$, $d_1=0$, $d_2=1$.


\begin{conj} The dimension of the $\Q$-vector space of formal cell-zeta
values on $\Mod_{0,n}$, modulo all linear relations obtained from
the dihedral and modular shuffle relations, is equal to $d_\ell$,
where $n=\ell+3$. 
\end{conj}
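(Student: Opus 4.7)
The plan is to prove the conjecture by matching upper and lower bounds on the dimension. Denoting by $\mathcal{FC}^{(n)}$ the quotient of the space of formal cell-zeta values on $\Mod_{0,n}$ by the dihedral and modular shuffle relations, I would show separately that $\dim \mathcal{FC}^{(n)} \leq d_\ell$ and $\dim \mathcal{FC}^{(n)} \geq d_\ell$, with $\ell = n-3$.

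For the lower bound, I would invoke the motivic framing map $m:\mathcal{FC}\to\widehat{\FMT}(\Z)$ provided by the preceding proposition, which is well-defined precisely because the dihedral and modular shuffle relations are motivic. Any linear relation holding in $\mathcal{FC}^{(n)}$ descends to one in the weight-$\ell$ graded piece of $\widehat{\FMT}(\Z)$, so $\dim\mathcal{FC}^{(n)}$ is bounded below by the dimension of the image of $m$ in that weight. Combining Brown's theorem $\mathcal{C}=\mathcal{Z}$ with the known surjectivity of the motivic MZV realisation onto $\widehat{\FMT}(\Z)$ (Deligne--Goncharov) yields surjectivity of $m$, whence $\dim\mathcal{FC}^{(n)} \geq \dim_\Q \gr^W_\ell \widehat{\FMT}(\Z) = d_\ell$.

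For the upper bound, I would start from the insertion-form basis of theorem \ref{insformsspan}, giving an explicit spanning set of Lyndon insertion words for the space of formal cell-zeta values, and construct a reduction algorithm using the three families in definition \ref{threerels} that rewrites any such word as a $\Q$-linear combination of a distinguished subset of size $d_\ell$. Motivated by the recurrence $d_k=d_{k-2}+d_{k-3}$, a natural route is induction on $n$: the product-map relations already express elements on $\Mod_{0,n}$ in terms of products coming from $\Mod_{0,r}\times\Mod_{0,s}$ with $r,s<n$, and one hopes that the dihedral relations together with the polygon identities of definition \ref{threerels} suffice to collapse the remaining count into the two-term Zagier recurrence. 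Matching the low-weight verifications through $n=9$ to such a recursive combinatorial template is, I expect, the most plausible avenue.

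The main obstacle lies squarely in the upper bound. Establishing that these three combinatorial families exhaust all relations between insertion forms is essentially equivalent to the companion conjecture $\mathcal{FC}/\zeta_2\mathcal{FC}\cong\widehat{\FMT}(\Z)$, and is of the same depth as (and in spirit implies) the dimension conjecture $\dim\mathcal{Z}_\ell = d_\ell$ for ordinary multiple zeta values, of which only the upper bound is currently known in the motivic setting. Any unconditional proof must therefore either bypass the motivic side by a purely combinatorial construction of an explicit $d_\ell$-element basis together with a termination argument for the reduction procedure, or match the three families to a provably complete set of motivic period relations on $\Mod_{0,n}$ --- a problem on a par with the double-shuffle conjecture for MZVs.
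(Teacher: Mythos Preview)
The statement you are attempting to prove is labelled a \emph{conjecture} in the paper, and the paper offers no proof: the only evidence given is direct computational verification for $n\leq 9$. So there is no ``paper's own proof'' to compare against; the task of proving this statement is genuinely open.

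Your proposal correctly recognises, in its final paragraph, that the upper bound $\dim\mathcal{FC}^{(n)}\leq d_\ell$ is essentially equivalent to the companion conjecture and lies at the depth of the double-shuffle conjecture for multiple zeta values. That diagnosis is right, and it is why the paper calls this a conjecture.

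Your lower-bound argument, however, also has a gap. You assert that ``the known surjectivity of the motivic MZV realisation onto $\widehat{\FMT}(\Z)$ (Deligne--Goncharov)'' combined with $\mathcal{C}=\mathcal{Z}$ gives surjectivity of $m$. But Deligne--Goncharov did \emph{not} prove that motivic multiple zeta values span $\FMT(\Z)$; this was an open problem at the time the paper was written and was only established by Brown's later theorem on mixed Tate motives over $\Z$ (2012). Moreover, even granting that later result, the identity $\mathcal{C}=\mathcal{Z}$ is a statement about real periods, not about framed motives, so it does not directly imply that the image of $m$ on formal cell-zeta values coincides with the image on motivic MZVs. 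Thus neither inequality can be established by the arguments you sketch, and the statement remains conjectural.
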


 We verified this  conjecture for $\Mod_{0,n}$ for
$n\leq 9$ by direct calculation (see $\S\ref{calculations}$).  When
$n=9$, the dimension of the convergent cohomology
$H^6(\Mod_{0,9}^\delta)$ is 1089, and after taking into account all
linear relations coming from dihedral and modular shuffle products,
this reduces to a vector space of dimension $d_6=2$.

To compare this picture with the classical picture of multiple zeta
values, let ${\cal FZ}$  denote the formal multizeta algebra. This
is the quotient of the free $\Q$-algebra generated by formal symbols
$(\ref{itint})$ modulo the regularised double shuffle relations. It
has been  conjectured that ${\cal FZ}$ is isomorphic to $\widehat\FMT(\Z)$,
and proved (cf. \cite{Te1}) that the dimensions $d_\ell$ are actually upper
bounds for the dimensions of the weight $\ell$ parts  of ${\cal FZ}$.  This
leads us to the second main conjecture.

\begin{conj}\label{wewish} The formal algebras ${\cal FC}$ and ${\cal FZ}$ are
isomorphic.
\end{conj}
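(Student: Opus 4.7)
The plan is to construct explicit algebra homomorphisms $\phi:\mathcal{FZ}\to \mathcal{FC}$ and $\psi:\mathcal{FC}\to \mathcal{FZ}$ and to show that they descend to mutually inverse isomorphisms on the quotient algebras. For $\phi$, I would send a convergent formal multizeta symbol $\zeta(n_1,\ldots,n_r)$ of weight $\ell=n_1+\cdots+n_r$ to the class in $\mathcal{FC}$ of the Kontsevich integrand (\ref{itint}), viewed as a combination of insertion forms on $\Mod_{0,\ell+3}$ integrated over the standard cell $X$. For $\psi$, I would promote Brown's theorem (Theorem~\ref{brownsthesis}) to a combinatorially explicit reduction algorithm: for each insertion form, apply an iterated fibration $\Mod_{0,n}\to\Mod_{0,n-1}\to\cdots\to\Mod_{0,4}$ and integrate one variable at a time, producing a canonical $\Q$-linear expansion of $\int_X\omega$ in terms of convergent iterated integrals of the form (\ref{itint}).

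The heart of the argument is then verifying that each defining family on one side maps into the ideal generated by the defining families on the other. The shuffle relations on $\mathcal{FZ}$ should correspond to the product map relations (\ref{introcellprodmap}) for the forgetful maps $\Mod_{0,n}\to\Mod_{0,r}\times \Mod_{0,s}$ that partition the marked points $t_i$ into two disjoint groups: pulling back a wedge of two Kontsevich-type forms produces exactly a sum over shuffles of their integrands, and this should recover the classical shuffle product. The stuffle and Hoffman relations would have to emerge on the $\mathcal{FC}$ side from the dihedral symmetries (\ref{introdihedralrel}) of $X$ together with the polygon identities listed in definition~\ref{threerels}; in particular the dihedral reversal stabilising $X$ matches the multizeta duality relation. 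Conversely, one must check that $\psi$ sends each of the three $\mathcal{FC}$-families into the regularised double shuffle ideal, which should follow from the motivic interpretation of the map $m$ sketched in the introduction.

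The main obstacle is the completeness step: proving either direction of injectivity on the quotient is tantamount to showing that the three families in definition~\ref{threerels} generate \emph{every} identity among cell-zeta values (and, dually, that regularised double shuffle generates every relation on $\mathcal{FZ}$). No direct combinatorial argument for this is available, and both statements contain the expected algebraic independence properties of multiple zeta values as sub-problems. A realistic intermediate step is to prove surjectivity in both directions: for $\phi$ this is immediate from Kontsevich's formula, and for $\psi$ it reduces to the explicit expansion of insertion forms via the polygon and fibration machinery of sections~3 and~4. Beyond that, the cleanest strategy I see is to route both sides through $\widehat\FMT(\Z)$: if one could independently prove that both $\mathcal{FC}/\zeta_2\mathcal{FC}$ and $\mathcal{FZ}$ map isomorphically onto $\widehat\FMT(\Z)$, as conjectured, the desired isomorphism would follow formally. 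In practice each of these intermediate isomorphisms appears at least as hard as the conjecture itself, which is why the statement is justly formulated as a conjecture and has so far only been verified computationally up through $n=9$.
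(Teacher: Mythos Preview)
The statement is a conjecture, and the paper does not prove it; there is nothing to compare your argument against. Your proposal is not a proof either, and to your credit you say so explicitly in the final paragraph: you correctly identify that the injectivity step on either side contains, as a subproblem, the conjectured completeness of the regularised double shuffle relations (respectively the three families of definition~\ref{threerels}), and that routing through $\widehat\FMT(\Z)$ merely trades one open conjecture for two others.

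Your sketch is in fact very close to the paper's own discussion surrounding the conjecture, particularly Remark~\ref{laterremark}. The candidate map $\phi$ you describe is exactly the map given there via formula~(\ref{Kontstoins}); the paper confirms that the shuffle relation follows from the product map relations for the partitions $(0,t_1,\ldots,t_m,1,\infty)$ and $(0,t_{m+1},\ldots,t_\ell,1,\infty)$, just as you say. Where you are slightly optimistic is in suggesting that the stuffle and Hoffman relations ``would have to emerge'' from the dihedral symmetries and polygon identities: the paper states outright that the authors are unable to deduce the regularised stuffle relations from the defining relations of $\mathcal{FC}$, and this is singled out as the principal obstruction even to showing that $\phi$ is an algebra homomorphism, let alone an isomorphism. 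Your observation that dihedral reversal yields the duality relation is correct (the paper notes this too), but duality is much weaker than stuffle. So the genuine gap is not in your reasoning but in the state of the art: no one currently knows how to produce stuffle inside $\mathcal{FC}$, and without it the map $\mathcal{FZ}\to\mathcal{FC}$ is not even known to be well-defined as an algebra map.
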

Put more prosaically, this states that
the formal ring of periods of
$\Mod_{0,n}$ modulo dihedral and modular shuffle relations, is
isomorphic to the formal ring of periods of the motivic fundamental
group of $\Mod_{0,4}$ modulo the regularised double shuffle
relations.

By (\ref{itint}), we have a natural linear map ${\cal FZ}\rightarrow
{\cal FC}$. However, at present we cannot show that it is an algebra
homomorphism. Indeed, although it is easy to deduce the regularised
shuffle relation for the image of ${\cal FZ}$ in ${\cal FC}$ from
the dihedral and modular shuffle relations, we are unable to deduce
the regularised stuffle relations.  For further detail on this question,
see remark \ref{laterremark} below.
\begin{rem}
The motivic nature of the regularised double shuffle relations
proved to be somewhat difficult to establish \cite{Go1}, \cite{Go2}, \cite{Te1}. It
is interesting that the motivic nature of the dihedral and modular
shuffle relations we define here is immediate.
\end{rem}

\vspace{.3cm}
\section{The cell-zeta value algebra associated to moduli spaces of curves}

Let $\Mod_{0,n}$, $n\geq 4$ denote the moduli space of genus zero
curves (Riemann
spheres) with $n$ ordered marked points $(z_1,\ldots,z_n)$.  This
space is described by the set of $n$-tuples of distinct points
$(z_1,\ldots,z_n)$ modulo the equivalence relation given by the
action of ${\PSL}_2$.  Because this action is triply transitive,
there is a unique representative of each equivalence class such that
$z_1=0$, $z_{n-1}=1$, $z_n=\infty$.  We define simplicial coordinates
$t_1,\ldots,t_\ell$ on $\Mod_{0,n}$ by setting
\begin{equation} \label{zsimp}
t_1=z_2 \ ,\quad  t_2=z_3 \ , \quad \ldots\ ,\quad  t_\ell=z_{n-2},
\end{equation}
where $\ell=n-3$ is the dimension of $\Mod_{0,n}(\C)$.
This gives the familiar identification
\begin{equation}\label{simplicialisom}
\Mod_{0,n} \cong \{ (t_1,\ldots, t_\ell) \in
(\Pro^1-\{0,1,\infty\})^\ell\mid
t_i\neq t_j \hbox{ for all } i\neq j \}\
.\end{equation}

\vspace{.3cm}
\subsection{Cell forms}

\begin{defn} Let $S=\{1,\ldots,n\}$.  A {\it cyclic structure}
  $\gamma$ on $S$ is
a cyclic ordering  of the elements of $S$ or equivalently, an
identification of the elements of $S$ with the edges of an oriented
$n$-gon modulo rotations. A {\it dihedral structure} $\delta$ on $S$
is an identification with the edges of an unoriented $n$-gon modulo
dihedral symmetries.
\end{defn}
We can write a cyclic structure as an ordered $n$-tuple
$\gamma=(\gamma(1), \gamma(2),...,\gamma(n))$ considered up to
cyclic rotations.



\begin{defn}\label{cellform} Let
  $(z_1,\ldots,z_n)=(0,t_1,\ldots,t_\ell,1,\infty)$
be a representative of a point on $\Mod_{0,n}$ as above. Let
$\gamma$ be a cyclic structure on $S$, and let $\sigma$ be the
unique ordering of $z_1,\ldots, z_n$ compatible with $\gamma$ such
that $\sigma(n)=n$. The \emph{cell-form} corresponding to $\gamma$
is defined to be the differential $\ell$-form
\begin{equation}\label{omegadef}
\omega_{\gamma} = [z_{\sigma(1)},z_{\sigma(2)},\ldots,z_{\sigma(n)}]=
\frac{dt_1\cdots dt_\ell}
{(z_{\sigma(2)}-z_{\sigma(1)}) (z_{\sigma(3)}-z_{\sigma(2)})\cdots
  (z_{\sigma(n-1)} -z_{\sigma(n-2)})}.
\end{equation}
In other words, by writing the terms of $\omega_\gamma =
[z_{\sigma(1)}, ... ,z_{\sigma(n)}]$ clockwise around a polygon, the
denominator of a cell form is just
the product of
successive differences $(z_{\sigma(i)} -z_{\sigma(i-1)})$
 with the two
factors containing $\infty$ simply left out.
\end{defn}

\begin{rem}\label{grlem}
To every dihedral structure there correspond two opposite cyclic
structures. If these are given by $\gamma$ and $\tau$, then we have
\begin{equation} \omega_{\gamma} = (-1)^n
  \omega_{\tau}. \end{equation}
\end{rem}

\begin{example} Let $n=7$, and $S=\{1,\ldots, 7\}$. Consider the cyclic
structure $\gamma$ on $S$ given by the order $1635724$.  The unique
ordering $\sigma$ of $S$ compatible with $\gamma$ and having
$\sigma(n)=n$, is the ordering $2416357$, which can be depicted by
writing the elements $z_{\sigma(1)},\ldots,z_{\sigma(7)}$, or $0$,
$1$, $t_2$, $t_4$, $\infty$, $t_1$, $t_3$ clockwise around a circle:
$$\gamma = (z_{\sigma(1)},\ldots,z_{\sigma(7)})=
(t_1,t_3,0,1,t_2,t_4,\infty).$$
The corresponding cell-form on $\Mod_{0,7}$ is
$$\omega_{\gamma}=[t_1,t_3,0,1,t_2,t_4,\infty]= {dt_1 dt_2 dt_3 dt_4
  \over (t_3-t_1) (-t_3)(t_2-1)(t_4-t_2)}\ .$$
\end{example}

The symmetric group $\Sym(S)$ acts on $\Mod_{0,n}$ by permutation of
the marked points. It therefore acts both on the set of cyclic
structures $\gamma$, and also on the ring of differential forms on
$\Mod_{0,n}$. These actions coincide for cell forms.

 For any cyclic structure $\gamma$ on $S$, let $
D_{\gamma}\subset \Sym(S)$ denote the group of automorphisms of the
dihedral structure which underlies $\gamma$, which is a dihedral
group of order $2n$. 

\begin{lem} \label{lemsymaction}
For every cyclic structure $\gamma$ on $S$, we have the formula:
\begin{equation}\label{symaction}
\sigma^*(\omega_\gamma) = \omega_{\sigma(\gamma)}\qquad \hbox{ for
all } \sigma \in \Sym(S)\ .\end{equation}
\end{lem}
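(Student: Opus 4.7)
The plan is to lift $\omega_\gamma$ to a $\PSL_2$-invariant rational top form on the configuration space $(\Pro^1)^n \setminus \Delta$, where the $\Sym(S)$-equivariance becomes manifest. For a cyclic structure $\gamma = (\gamma_1,\ldots,\gamma_n)$, introduce the rational $n$-form
\[
\Omega_\gamma \;=\; \frac{dz_1 \wedge \cdots \wedge dz_n}{\prod_{i=1}^n (z_{\gamma_i} - z_{\gamma_{i+1}})},
\]
with indices in the denominator taken cyclically modulo $n$. The first step is the direct verification that $\Omega_\gamma$ is invariant under the diagonal $\PSL_2$-action: under $z \mapsto (az+b)/(cz+d)$ with $ad-bc=1$, each factor $dz_i$ picks up $(cz_i+d)^{-2}$, and since each index appears as an endpoint of exactly two cyclically-adjacent difference factors $z_{\gamma_j}-z_{\gamma_{j+1}}$, the denominator picks up the same factor; the two cancel.

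Next I would descend $\Omega_\gamma$ to a rational $(n-3)$-form on $\Mod_{0,n}$ by contracting with the three vector fields generating the $\PSL_2$-action and restricting to the standard section $(z_1,z_{n-1},z_n) = (0,1,\infty)$. A direct calculation in simplicial coordinates recovers $\omega_\gamma$ as in Definition \ref{cellform}: the three cyclic-difference factors in the denominator involving $z_n = \infty$ become trivial, leaving precisely the $n-2$ successive-difference factors from the formula, while the numerator reduces to $dt_1\wedge\cdots\wedge dt_\ell$.

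With $\omega_\gamma$ realised as the descent of the $\PSL_2$-invariant form $\Omega_\gamma$, the lemma becomes nearly tautological. The $\Sym(S)$-action on $(\Pro^1)^n$ by coordinate permutation satisfies $\sigma^*(z_i) = z_{\sigma(i)}$, so direct substitution yields $\sigma^*(\Omega_\gamma) = \Omega_{\sigma(\gamma)}$: the denominator is sent to $\prod_i(z_{\sigma(\gamma_i)}-z_{\sigma(\gamma_{i+1})})$, and the numerator $dz_1\wedge\cdots\wedge dz_n$ is permuted into itself. Because the $\PSL_2$-descent is $\Sym(S)$-equivariant, the identity $\sigma^*(\omega_\gamma) = \omega_{\sigma(\gamma)}$ descends to $\Mod_{0,n}$. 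An alternative, more pedestrian route is to verify the identity on a generating set of transpositions: for $(i\,j)$ with $\{i,j\}\cap\{1,n-1,n\}=\emptyset$ it is immediate because $\sigma$ merely swaps two simplicial coordinates; transpositions moving the distinguished indices require a Möbius transformation to restore the standard section, and one checks that the resulting rational coordinate change is precisely the one that substitutes $z_i \mapsto z_{\sigma(i)}$ in the formula for $\omega_\gamma$.

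The main technical obstacle is the sign bookkeeping. Several sources of signs must conspire: the sign $\mathrm{sgn}(\sigma)$ from reordering the factors of $dz_1\wedge\cdots\wedge dz_n$ under $\sigma$; the Jacobian sign from re-choosing the $\PSL_2$-section when $\sigma$ fails to fix the distinguished set $\{1,n-1,n\}$; and the sign from cyclically rotating the representative of $\sigma(\gamma)$ to place $n$ at the end of the canonical ordering. Checking that these signs combine with no net contribution, consistently with Remark \ref{grlem}, is where most of the care is needed; the $\PSL_2$-invariant argument above provides the scaffolding that reduces the proof to this bookkeeping.
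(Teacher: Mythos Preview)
Your proposal is correct and follows essentially the same strategy as the paper: lift $\omega_\gamma$ to the $\PSL_2$-invariant form $\widetilde{\omega}_\gamma = \Omega_\gamma$ on configuration space, where the $\Sym(S)$-equivariance is manifest, and then descend. The only cosmetic difference is in how the descent is phrased: the paper characterises $\omega_\gamma$ as the unique $\ell$-form with $\omega_\gamma \wedge v = \widetilde{\omega}_\gamma$ for $v$ the invariant logarithmic $3$-form on $\PSL_2$, whereas you contract with the three generating vector fields of the $\PSL_2$-action --- these are dual descriptions of the same reduction, and your discussion of the sign bookkeeping is more explicit than the paper's.
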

\begin{proof}Consider the logarithmic $n$-form on
$(\Pro^1)^S_*$ defined by the formula:
\begin{equation}\label{omegalift}
\widetilde{\omega}_\gamma = {dz_1 \wedge \ldots \wedge dz_n \over
(z_{\gamma(1)}-z_{\gamma(2)})\ldots (z_{\gamma(n)}- z_{\gamma(1)})}
\ .\end{equation}
 It clearly satisfies
 $\sigma^*(\widetilde{\omega}_{\gamma})=
 \widetilde{\omega}_{\sigma(\gamma)}$ for all $\sigma \in D_{\gamma}$.
 A simple calculation shows that $\widetilde{\omega}_\gamma$
is invariant under the action of $\PSL_2$ by M\"obius
transformations. Let $\pi:(\Pro^1)^S_* \rightarrow \Mod_{0,S}$
denote the projection map with fibres isomorphic to  $\PSL_2$. There
is a unique (up to scalar multiple in $\Q^\times$) non-zero
invariant logarithmic 3-form $v$ on $\PSL_2(\C)$ which is defined over
$\Q$. Then, by renormalising $v$ if necessary, we have
$\omega_{\gamma}\wedge v = \widetilde{\omega}_{\gamma}\ .$
 In fact,
 $\omega_{\gamma}$ is the unique
$\ell$-form on $\Mod_{0,S}$ satisfying  this equation. We deduce
that $\sigma^*(\omega_{\gamma})= \omega_{\sigma(\gamma)}$ for all
$\sigma \in D_{\gamma}$.
\end{proof}


Each dihedral structure $\eta$ on $S$ corresponds to a unique
connected component of the real locus $\Mod_{0,n}(\R)$, namely the
component associated to the set of Riemann spheres with real marked
points $(z_1,\ldots,z_n)$ whose real ordering is given by $\eta$. We
denote this component by $X_{S,\eta}$ or $X_{n,\eta}$.  It is an
algebraic manifold with corners with the combinatorial structure of
a Stasheff polytope, so we often refer to it as a \emph{cell}. A
cyclic structure compatible with $\eta$ corresponds to a choice of
orientation of this cell.
 
\begin{defn}Let $\delta$ once and for all denote the
cyclic order corresponding to the ordering $(1,2,\ldots,n)$. We call
$X_{S,\delta}=X_{n,\delta}$ the \emph{standard cell}.  It is the set
of points on $\Mod_{0,n}$ given by real marked points
$(0,t_1,\ldots,t_\ell,1,\infty)$ in that cyclic order; in simplicial
coordinates it is given by  the standard real simplex
$0<t_1<\ldots<t_\ell<1$.
\end{defn}

The distinguishing feature of cell-forms, from which they derive their name,
is given in the following proposition.

\vspace{.2cm}
\begin{prop}\label{CORomegapoles} Let $\eta$ be a dihedral structure
on $S$, and let $\gamma$ be either of the two cyclic substructures
of $\eta$.  Then the cell form $\omega_\gamma$  has  simple poles along the
boundary of the cell $X_{S,\eta}$ and no poles anywhere else.
\end{prop}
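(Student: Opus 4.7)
For each boundary divisor $D_{A|B}$ of $\overline\Mod_{0,S}$, indexed by a partition $S = A \sqcup B$ with $|A|, |B| \ge 2$, I would determine the order of $\omega_\gamma$ along $D_{A|B}$. The boundary divisors of $X_{S,\eta}$ are exactly those for which both $A$ and $B$ are cyclically consecutive in $\eta$, equivalently, the chord divisors of $\eta$. Since $\gamma$ and its opposite share the same dihedral structure, the chord condition on $\gamma$ coincides with that on $\eta$, so the claim reduces to showing that $\omega_\gamma$ has a simple pole along $D_{A|B}$ precisely when $A|B$ is a chord of $\gamma$, and is regular otherwise.

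The easy case is the two-element chord divisors $D_{\{a,b\}|S\setminus\{a,b\}}$, which are directly visible in simplicial coordinates. The denominator of $\omega_\gamma$ contains the factor $(z_b - z_a)$ if and only if $a,b$ are cyclically adjacent in $\gamma$, and then only to first order. Equivalently, this follows immediately from the lift $\widetilde\omega_\gamma$ of $(\ref{omegalift})$, whose polar divisor on $(\Pro^1)^S_*$ manifestly consists of the hyperplanes $\{z_a = z_b\}$ over the $\gamma$-adjacent pairs $a,b$.

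For the deeper divisors $D_{A|B}$ with $|A|, |B| \ge 3$, I would invoke Keel's local model. Near a generic point, $D_{A|B}$ is cut out by a single parameter $\epsilon$, and one can choose coordinates in which the $z$-coordinates labelled by $A$ are rescaled as $\epsilon u_j$ (after translation to a chosen base point on the bubble), while those labelled by $B$ remain finite. Expanding in $\epsilon$, each denominator factor $(z_{\gamma(i+1)} - z_{\gamma(i)})$ contributes $\epsilon^1$ if both indices lie in $A$ and $\epsilon^0$ otherwise, giving a total of $\epsilon^{k_A}$, where $k_A$ counts the $\gamma$-consecutive pairs entirely contained in $A$, after discounting the dropped $\infty$-pairs. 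Meanwhile the top form $dt_1 \cdots dt_\ell$ contributes $\epsilon^{|A|-1}$ in its polynomial part and $\epsilon^{|A|-2}\,d\epsilon$ in its $d\epsilon$-coefficient. Combining these, $\omega_\gamma$ acquires a simple pole along $\{\epsilon = 0\}$ if and only if $k_A = |A| - 1$.

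The combinatorial punchline is the identity $k_A = |A| - m$, where $m$ is the number of cyclic arcs of $A$ in $\gamma$ (with appropriate adjustment for the restored $\infty$-pairs). Hence $k_A = |A| - 1$ precisely when $A$ forms a single cyclic arc, that is, precisely when $A|B$ is a chord of $\gamma$. In the chord case, one can additionally verify that the residue equals the product of the induced cell forms on $\overline\Mod_{0, A\cup\{\ast\}}$ and $\overline\Mod_{0, B\cup\{\ast\}}$, confirming that the pole is genuine. The principal obstacle is the bookkeeping of the two dropped $\infty$-pairs and the resulting case analysis depending on which side of the chord the marked points $0, 1, \infty$ lie; the cleanest dispatch is by induction on $n$, exploiting the fact that the residue along $D_{A|B}$ factors as a product of cell forms on smaller moduli spaces.
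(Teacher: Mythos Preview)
Your approach is different from the paper's and, while the strategy is sound, the execution has a genuine gap where you yourself flag the bookkeeping as ``the principal obstacle.''

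\textbf{Comparison.} The paper does not perform a direct blow-up computation. Instead it writes $\omega_\gamma = f_\gamma\,\omega_0$, where $\omega_0$ is a canonical volume form with no zeros or poles on $\overline{\Mod}_{0,n}$, and then invokes a pre-packaged formula (proposition~7.5 of \cite{Br2}) for the order of a product of cross-ratio factors along any boundary divisor. This yields directly
\[
2\,\ord_D(\omega_\gamma) = (\ell - 1) - \sum_{i \in \Z/n\Z} \I_D(\gamma(i),\gamma(i+1)),
\]
from which one reads off $\ord_D(\omega_\gamma) = m - 2$, where $m$ is the number of cyclic arcs of $A$ in $\gamma$. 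The formula is symmetric in all marked points, so no case analysis on the location of $0,1,\infty$ is needed. Your approach, by contrast, chooses explicit local coordinates on the blow-up and counts $\epsilon$-orders by hand; this is a legitimate alternative, and your combinatorial identity $k_A = |A| - m$ is exactly right.

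\textbf{The gap.} Your claimed orders are only correct in the case where the colliding set (the side $C$ not containing $\infty$) satisfies $|C\cap\{0,1\}|\leq 1$. When $0,1\in A$ and $\infty\in B$, the $t_i$'s in $B$ must go to infinity, and the picture changes: writing $t_j = v_j/\epsilon$ for $t_j\in B$, the top form $dt_1\cdots dt_\ell$ acquires order $\epsilon^{-|B|}$ (not $\epsilon^{|A|-2}$), and the denominator factors with one or both endpoints in $B$ each pick up $\epsilon^{-1}$. The final answer is still $\ord_D = m-2$, but the intermediate accounting is entirely different from what you wrote. Your proposed induction via residue factorization does not resolve this: the residue computation presupposes the order, so the argument is circular. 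The clean fix is either to carry out all cases explicitly, or better, to pass to the $\PSL_2$-invariant lift $\widetilde\omega_\gamma$ of $(\ref{omegalift})$, which treats all marked points on an equal footing and removes the special role of $0,1,\infty$ entirely; this is essentially what the formula from \cite{Br2} encodes.
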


\begin{proof}
Let $D\subset \overline{\Mod}_{0,S}\backslash \Mod_{0,S}$ be a
divisor given by a partition $S=S_1\coprod S_2$ such that 
$|S_i|>1$ for $i=1,2$.  In \cite{Br2}, the following
notation was introduced:
$$\I_D(i,j) = \I(\{i,j\}\subset S_1) + \I(\{i,j\}\subset S_2)\ ,$$
where $\I(A\subset B)$ is the indicator function which takes the
value $1$ if $A$ is contained in $B$ and $0$ otherwise. Therefore
$\I_D(i,j)\in  \{0,1\}. $  Then we have
\begin{equation} \label{ordcell}
2\,\ord_D (\omega_\gamma) = (\ell-1)- \I_D(\gamma(1),\gamma(2)) -
\I_D(\gamma(2),\gamma(3)) - \ldots- \I_D(\gamma(n),\gamma(1))\ .
\end{equation}
To prove this, observe that $\omega_\gamma=f_\gamma \omega_0$, where
$$f_\gamma= \prod_{i\in \Z/n\Z} {(z_i-z_{i+2}) \over (z_{\gamma(i)}-z_{\gamma(i+1)})}
\ ,$$ and
$$\omega_0 = {dt_1\ldots dt_\ell \over t_2(t_3-t_1)(t_4-t_2)\ldots
  (t_\ell-t_{\ell-2}) (1-t_{\ell})}$$
is the canonical volume form with no zeros or poles along the
standard cell  defined in \cite{Br2}. The proof of (\ref{ordcell})
follows on applying proposition 7.5 from \cite{Br2}.

Now, $(\ref{ordcell})$ shows that $\omega_\gamma$ has the worst singularities
when the most possible $\I_D(\gamma(i),\gamma(i+1))$ are equal to 1.  This
happens when only two of them are equal to zero, namely
$$S_1=\{\gamma(1),\gamma(2),\ldots,\gamma(k)\}\quad \hbox{ and } \quad
S_2= \{\gamma(k+1),\gamma(k+2),\ldots,\gamma(n)\},\ \ 2\le k\le n-2.$$
In this case, $(\ref{ordcell})$ yields $2\ord_D\omega_\gamma=(\ell-1)-(n-2)=
-2$, so $\ord_D \omega_{\gamma}=-1$. In all other cases we must therefore
have $\ord_D \omega_{\gamma} \geq 0$. Thus the  singular locus of
$\omega_{\gamma}$ is precisely given by the set of divisors bounding
the cell $X_{S,\eta}$.
\end{proof}

\vspace{.3cm}
\subsection{01 cell-forms and a basis of the cohomology of
  $\Mod_{0,n}$}
We first derive some useful identities between certain rational
functions. Let $S=\{1,\ldots, n\}$ and let  $v_1,\ldots,v_n$ denote
coordinates on $\A^n$.
 For every cyclic
structure $\gamma$ on $S$, let $\cfl \gamma \cfr=\cfl
v_{\gamma(1)},\ldots,v_{\gamma(n)} \cfr$ denote the rational
function
\begin{equation}\label{cellfuncdef}
{{1}\over{(v_{\gamma(2)}-v_{\gamma(1)})\cdots
(v_{\gamma(n)}-v_{\gamma(n-1)})(v_{\gamma(1)}-v_{\gamma(n)})}}\in
\Z\Big[v_i, {1\over v_i-v_j}\Big]\ .\end{equation} We refer to such
a function as a cell-function.
 We can
extend its definition linearly to $\Q$-linear combinations of cyclic
structures. Let $X=\{x_1,\ldots, x_n\}$ denote any alphabet on $n$
symbols. Recall that the shuffle product \cite{Re} is defined on
linear combinations of words on $X$ by the inductive formula
\begin{equation}\label{shufflerec}
w \sha e = e\sha w \quad \hbox{ and }\quad   a w \sha a'w' = a(w\sha
a'w') + a'(aw\sha w')\ , \end{equation} where $w,w'$ are any words
in $X$ and $e$ denotes the empty or trivial word.

\begin{defn}\label{defshufprod}  Let $A, B\subset S$  such that
$A\cap B=C=\{c_1,\ldots,c_r\}$ with $r\ge 1$.  Let $\gamma_A$ be a cyclic order on
$A$ such that the elements $c_1,\ldots,c_r$ appear in their standard
cyclic order, and let $\gamma_B$ be a cyclic order on $B$ with the
same property.  We write
$\gamma_A=(c_1,A_{1,2},c_2,A_{2,3},\ldots,c_r,A_{r,1})$ and
$\gamma_B=(c_1,B_{1,2},c_2,B_{2,3},\ldots,c_r,B_{r,1})$, where the
$A_{i,i+1}$,  (resp. the $B_{i,i+1}$) together with $C$, form a
partition of $A$ (resp. $B$). We denote the {\it shuffle
  product} of
the two cell-functions $\cfl \gamma_A \cfr$ and $\cfl \gamma_B \cfr
$ with respect to $c_1,\ldots,c_r$ by
$$\cfl \gamma_A \cfr \sha_{c_1,\ldots,c_r}
\cfl\gamma_B\cfr$$
which is defined to be the sum of cell functions
\begin{equation}\label{formalshufprod}\cfl c_1,A_{1,2}\sha
  B_{1,2},c_2,A_{2,3}\sha B_{2,3},
\ldots,c_r,A_{r,1}\sha B_{r,1}\cfr \ .\end{equation}
\end{defn}

The shuffle product of two cell-functions is related to their actual
product by the following lemma.

\begin{prop}\label{shufprod} Let $A,B \subset S$, such that $|A\cap
B|\geq 2$.  Let $\gamma_A$, $\gamma_B$ be cyclic structures on $A,B$
such that the  cyclic structures on $A\cap B$ induced by  $\gamma_A$
and $\gamma_B$ coincide. If $\gamma_{A\cap B}$ denotes the induced
cyclic structure on $A\cap B$, we have:
\begin{equation}\label{cellfunctionshufflerel}
{{\cfl\gamma_A\cfr \cdot \cfl \gamma_B\cfr }\over{ \cfl
\gamma_{A\cap B}\cfr }}= \cfl \gamma_A\cfr \sha_{\gamma_{A\cap B}}
\cfl \gamma_B\cfr \ .
\end{equation}
\end{prop}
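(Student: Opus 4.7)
The plan is to reduce (\ref{cellfunctionshufflerel}) to a \emph{local identity} along each of the $r$ arcs of the common cyclic structure on $C = A\cap B$, and then to prove the local identity by induction on the total length of the shuffled sequences. Set
$$D(a, x_1, \ldots, x_k, b) := (v_{x_1}-v_a)(v_{x_2}-v_{x_1})\cdots(v_b-v_{x_k}),$$
with the convention $D(a,b) = v_b - v_a$. Then the cell-function $\cfl\gamma_A\cfr$ factors arc by arc as $\prod_{i=1}^r 1/D(c_i, A_{i,i+1}, c_{i+1})$, and analogously $\cfl\gamma_B\cfr$ and $\cfl\gamma_{A\cap B}\cfr = \prod_i 1/(v_{c_{i+1}} - v_{c_i})$ decompose over arcs. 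On the right-hand side, the shuffle product of (\ref{formalshufprod}) interleaves the arcs $A_{i,i+1}$ and $B_{i,i+1}$ independently for each $i$, so the resulting sum of cell-functions also factors as a product over $i$ of per-arc sums. Thus the proposition reduces, arc by arc, to the local identity
$$\frac{v_{c_{i+1}} - v_{c_i}}{D(c_i, A_{i,i+1}, c_{i+1})\,D(c_i, B_{i,i+1}, c_{i+1})} = \sum_{\sigma\in A_{i,i+1}\sha B_{i,i+1}} \frac{1}{D(c_i, \sigma, c_{i+1})}.$$

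Abstracting endpoints to $x,y$ and sequences to $u=(u_1,\ldots,u_p)$, $w=(w_1,\ldots,w_q)$, this local identity becomes the classical shuffle relation
$$\frac{y-x}{D(x,u,y)\,D(x,w,y)} = \sum_{\sigma\in u\sha w}\frac{1}{D(x,\sigma,y)},$$
familiar from the theory of iterated integrals with kernels $dt/(t-a)$. I would prove it by induction on $p+q$; the cases $p=0$ or $q=0$ are immediate. For the inductive step, apply the shuffle recursion $u\sha w = u_1(u'\sha w) + w_1(u\sha w')$ to the right-hand side and factor out the leading $1/(u_1-x)$ and $1/(w_1-x)$ from the two summands, then invoke the induction hypothesis on the shorter sequences with new left endpoint $u_1$, respectively $w_1$. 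Using $D(x,u,y)=(u_1-x)\,D(u_1,u',y)$ (and its analogue for $w$) to restore the common denominator $D(x,u,y)\,D(x,w,y)$, the required equation collapses to the algebraic identity
$$(y-u_1)(w_1-x) - (y-w_1)(u_1-x) = (y-x)(w_1-u_1),$$
which holds by direct expansion.

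The main obstacle is the bookkeeping in this inductive step: after the shuffle recursion is applied and the induction hypothesis invoked with \emph{different} left endpoints in the two summands, one must verify that the two resulting fractions recombine over the correct common denominator $D(x,u,y)\,D(x,w,y)$, and the algebraic identity above encodes precisely this compatibility. Once the local identity is established, multiplying it over all $r$ arcs of $C$ yields (\ref{cellfunctionshufflerel}).
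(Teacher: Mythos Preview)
Your proposal is correct and follows essentially the same approach as the paper: both reduce to the single-arc identity (the paper's equation (\ref{r=2})) via the arc-by-arc factorization, prove that identity by induction on the total length using the shuffle recursion with a shifted left endpoint, and close the induction with the same quadratic identity (which the paper phrases as the Pl\"ucker relation $\Delta_{ar_1}\Delta_{bp_1}-\Delta_{br_1}\Delta_{ap_1}=\Delta_{p_1r_1}\Delta_{ba}$). The only cosmetic difference is that you place the arc decomposition at the start while the paper invokes it at the end.
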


\begin{proof}
Write the cell functions $\cfl \gamma_A \cfr $ and $\cfl \gamma_B
\cfr$ as $\cfl a_{i_1}, P_1, a_{i_2}, P_2,\ldots ,a_{i_r}, P_{r}\cfr
$ and\break $\cfl a_{i_1}, R_1, a_{i_2}, R_2, \ldots, a_{i_r}
,R_{r}\cfr$, where $P_i, R_i$ for $1\leq i\leq r$ are tuples of
elements in $S$. Let $\Delta_{ab}= (b-a)$. We will first prove the
result for $r=2$ and $P_2,R_2=\emptyset$:
\begin{equation}\label{r=2}
\Delta_{ab}\Delta_{ba}\cfl a ,p_1,\ldots ,p_{k_1} ,b\cfr  \cfl a
,r_1,\ldots,
  r_{k_2},b\cfr   =\cfl a ,(p_1,\ldots ,p_{k_1})\sha (r_1,\ldots, r_{k_2}),b\cfr  .
\end{equation}
We prove this case by induction on $k_1+k_2$.
Trivially, for $k_1+k_2=0$ we have
\begin{equation*}
\Delta_{ab}\Delta_{ba}\cfl a, b\cfr  \cfl a, b\cfr   = \cfl a, b\cfr
.
\end{equation*}
Now assume the induction hypothesis that
\begin{align*} & \Delta_{ab}\Delta_{ba}\cfl a,
  p_2 ,\ldots, p_{k_1},
  b\cfr  \cfl a ,r_1,\ldots ,r_{k_2}, b\cfr   = \cfl a, \bigl( (p_2,\ldots, p_{k_1})\sha
  (r_1, \ldots
  ,r_{k_2})\bigr),b\cfr
\hbox{ and } \\ & \Delta_{ab}\Delta_{ba}\cfl a ,p_1,\ldots, p_{k_1},
    b\cfr  \cfl a, r_2,\ldots
  ,r_{k_2},b\cfr   = \cfl a ,\bigl( (p_1,\ldots ,p_{k_1})\sha (r_2,\ldots
  ,r_{k_2})\bigr),b\cfr  .\end{align*}
To lighten the notation, let $p_2,\ldots ,p_{k_1}=\underline{p}$ and
$r_2,\ldots, r_{k_2}=\underline{r}$. By the  shuffle recurrence
formula $(\ref{shufflerec})$ and  the induction hypothesis:
\begin{align*}
\cfl a, \bigl( (p_1,\underline{p})\sha (r_1,\underline{r})\bigr) ,
b\cfr   & = \cfl a ,p_1, \bigl( (\underline{p} )\sha
(r_1,\underline{r})\bigr) , b\cfr   +
\cfl a ,r_1 ,\bigl( (p_1, \underline{p}) \sha (\underline{r})\bigr), b\cfr   \\
&= \frac{\Delta_{p_1b}\cfl p_1, \bigl( (\underline{p})
    \sha (r_1, \underline{r})\bigr), b\cfr  }{\Delta_{ab}\Delta_{ap_1}} +
  \frac{ \Delta_{r_1b}\cfl r_1, \bigl( (p_1,\underline{p})
    \sha \underline{r}) \bigr), b\cfr  } {\Delta_{ab}\Delta_{ar_1}}\\
&=\frac{\Delta_{p_1b}\Delta_{bp_1}\Delta_{p_1b}\cfl p_1
,\underline{p}, b\cfr  \cfl
  p_1 , r_1, \underline{r}, b\cfr  }{\Delta_{ab}\Delta_{ap_1} }+
\frac{\Delta_{r_1b}\Delta_{br_1}\Delta_{r _1b}\cfl r_1 ,p_1,
  \underline{p},
  b\cfr  \cfl  r_1 , \underline{r}, b\cfr  }{\Delta_{ab}\Delta_{ar_1} }
\end{align*}
Using identities such as $\cfl p_1 ,\underline{p},b\cfr
={\Delta_{ap_1}\Delta_{ba} \over \Delta_{bp_1}}\cfl a,
p_1,\underline{p}, b\cfr$, this is
$$\Big[{\Delta^2_{p_1b}\Delta_{bp_1} \over \Delta_{ab}
\Delta_{ap_1}}\, {\Delta_{ap_1}\Delta_{ba}\over \Delta_{bp_1}}
\,{\Delta_{ba}\Delta_{ar_1} \over \Delta_{bp_1}\Delta_{p_1r_1}} +
{\Delta^2_{r_1 b} \Delta_{br_1}  \over \Delta_{ab}\Delta_{ar_1}}
{\Delta_{ap_1}\Delta_{ba} \over \Delta_{r_1p_1}\Delta_{br_1} }
{\Delta_{ba}\Delta_{ar_1} \over \Delta_{br_1}}\Big]\cfl a , p_1,
\underline{p}, b\cfr \cfl a, r_1, \underline{r}, b\cfr  $$
$$=\Delta_{ab} \Big[{\Delta_{ar_1}\Delta_{bp_1} \over \Delta_{p_1r_1}} + {\Delta_{br_1}\Delta_{ap_1}\over \Delta_{r_1p_1}}\Big]\cfl a , p_1, \underline{p},  b\cfr  \cfl
a, r_1, \underline{r}, b\cfr = \Delta_{ab}\Delta_{ba}\cfl a , p_1,
\underline{p},  b\cfr  \cfl a, r_1, \underline{r}, b\cfr .$$
 The last equality is the Pl\"ucker relation $\Delta_{ar_1}\Delta_{bp_1}- \Delta_{br_1}\Delta_{ap_1}=\Delta_{p_1r_1}\Delta_{ba}$. This proves the identity \eqref{r=2}.
Now,  using the identity
\begin{align*}
\cfl a_{i_1} P_1 a_{i_2} P_2 \ldots a_{i_r} P_r \cfr  =
\Delta_{a_{i_2}a_{i_1}} \cfl a_{i_1} P_1 a_{i_2}\cfr   \times
\Delta_{a_{i_3}a_{i_2}} \cfl a_{i_2} P_2 a_{i_3}\cfr  \times \cdots
\times \Delta_{a_{i_r} a_{i_1}} \cfl a_{i_r} P_r a_{i_1}\cfr  ,
\end{align*}
the general case follows   from \eqref{r=2}.
\end{proof}

\begin{cor} Let $X$ and $Y$ be disjoint sequences of indeterminates and
let $e$ be an indeterminate not appearing in either $X$ or $Y$.  We
have the following identity on cell functions:
\begin{equation} \label{cellfunction1shuffsare0}
\cfl (X,e)\sha_e (Y,e) \cfr= \cfl X\sha Y ,e\cfr =0.
\end{equation}
\end{cor}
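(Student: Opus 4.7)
The first equality in the corollary is immediate: Definition \ref{defshufprod} applied with $r=1$, $c_1 = e$, $A = X \cup \{e\}$, $B = Y \cup \{e\}$ gives $\cfl (X, e) \sha_e (Y, e)\cfr = \cfl e, X \sha Y\cfr$, which equals $\cfl X \sha Y, e \cfr$ by the cyclic invariance of cell-functions. So the substantive task is to show that $F(e) := \cfl X \sha Y, e \cfr = 0$ as a rational function.

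The plan is to treat $F(e)$ as a rational function in the single variable $e$, with the indeterminates in $X$ and $Y$ held fixed, and to show that (i) $F(e)$ has no finite poles in $e$, and (ii) $F(e) \to 0$ as $e \to \infty$; any rational function with these two properties vanishes identically. Statement (ii) is immediate: each term $\cfl \sigma, e \cfr$ contains both $(e - \sigma_n)$ and $(\sigma_1 - e)$ among its denominator factors, so it decays like $e^{-2}$. For (i), $\cfl \sigma, e \cfr$ has simple poles in $e$ only at $e = \sigma_1$ and $e = \sigma_n$; writing $X = (x_1, \ldots, x_p)$ and $Y = (y_1, \ldots, y_q)$, the first and last entries of any shuffle $\sigma \in X \sha Y$ must lie in $\{x_1, y_1\}$ and $\{x_p, y_q\}$ respectively, so the only candidate poles of $F$ are the four points $x_1, y_1, x_p, y_q$.

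I will show by induction on $m = p + q \ge 2$ that all four residues vanish (with $X$ and $Y$ assumed non-empty). A short calculation gives $\mathrm{Res}_{e = \sigma_1} \cfl \sigma, e \cfr = -\cfl \sigma \cfr$ (from the factor $(\sigma_1 - e)$) and $\mathrm{Res}_{e = \sigma_n} \cfl \sigma, e \cfr = +\cfl \sigma \cfr$. Consider the residue at $e = x_1$. When $p \ge 2$, $x_1$ cannot be the last entry of a shuffle, so only shuffles of the form $(x_1, \sigma')$ with $\sigma' \in X' \sha Y$ and $X' = (x_2, \ldots, x_p)$ contribute, giving
$$\mathrm{Res}_{e = x_1} F(e) = -\sum_{\sigma' \in X' \sha Y} \cfl x_1, \sigma' \cfr = -\cfl X' \sha Y, x_1 \cfr,$$
which vanishes by the inductive hypothesis since $|X'| + |Y| = m-1$ and both sequences remain non-empty. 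When $p = 1$, there is a unique shuffle beginning with $x_1$ and a unique one ending with $x_1$; their contributions $-\cfl x_1, Y \cfr$ and $+\cfl Y, x_1 \cfr$ are the same cyclic cell-function with opposite signs, hence cancel. The remaining three residues are treated by symmetric arguments, and the base case $m = 2$ reduces to a direct two-term Pl\"ucker-style cancellation.

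The main subtlety in executing this plan will be purely bookkeeping: tracking the asymmetric signs coming from $(\sigma_1 - e)$ versus $(e - \sigma_n)$ in the denominators, and carefully separating the edge cases $|X| = 1$ or $|Y| = 1$, in which the induction would otherwise bottom out prematurely and must be closed off instead by the direct cyclic-invariance cancellation described above.
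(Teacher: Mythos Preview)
Your argument is correct and takes a genuinely different route from the paper's own proof. The paper derives the vanishing directly from proposition~\ref{shufprod}: applying the shuffle recurrence $X\sha Y = x_1(X'\sha Y) + y_1(X\sha Y')$ and then the product formula $(\ref{cellfunctionshufflerel})$ with the two-element common set $\{x_1,e\}$ (respectively $\{y_1,e\}$), one gets
\[
\cfl X\sha Y,e\cfr = \cfl X,e\cfr\,\cfl x_1,Y,e\cfr\,(e-x_1)(x_1-e) + \cfl y_1,X,e\cfr\,\cfl Y,e\cfr\,(y_1-e)(e-y_1),
\]
and a two-line explicit expansion shows the two terms cancel. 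So the paper's proof is a one-step, non-inductive application of the preceding proposition, which is why the statement is labelled a corollary.

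Your approach, by contrast, avoids proposition~\ref{shufprod} entirely: you view $F(e)$ as a rational function of $e$, bound its degree at infinity, and kill the four candidate poles by induction on $|X|+|Y|$ (with the boundary cases $|X|=1$ or $|Y|=1$ closed off by the cyclic-invariance cancellation $\cfl x_1,Y\cfr = \cfl Y,x_1\cfr$). This is self-contained and conceptually clean; it would stand even without the somewhat intricate proof of proposition~\ref{shufprod}. The trade-off is that you need the inductive machinery and the case split on $p=1$, whereas the paper's method is a direct specialization of a result already in hand. Both are short; yours is arguably the more robust of the two, since the residue-and-decay template generalizes readily to other such identities.
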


\begin{proof} Write $X=x_1,x_2,...,x_n$ and $Y=y_1,y_2,...,y_m$.
By the recurrence formula for the shuffle product and proposition
\ref{shufprod}, we have
\begin{align*} \cfl X\sha Y ,e\cfr   & = \cfl x_1, (x_2,...,x_n\sha y_1,...,y_m) , e\cfr   +
\cfl y_1, (x_1,...,x_n \sha y_2,...,y_m) , e\cfr   \\
&= \cfl X,e\cfr  \cfl x_1, Y, e\cfr  (e-x_1)(x_1-e) + \cfl y_1, X,
e\cfr   \cfl Y,e\cfr
(y_1-e)(e-y_1) \\
&= \frac{(e-x_1)(x_1-e)}{(x_2-x_1)\cdots (e-x_n)
   (x_1-e)\ (y_1-x_1)(y_2-y_1)\cdots (e-y_m)(x_1-e)} \\
&\qquad + \frac{(y_1-e)(e-y_1)} {
   (x_1-y_1)(x_2-x_1)\cdots (e-x_n) (y_1-e) \ (y_2-y_1)\cdots
   (e-y_m)(y_1-e)} \\
&= \frac{ (-1) + (-1)^2 }{ (x_2-x_1) \cdots (e-x_n)
   \ (y_1-x_1)(y_2-y_1)\cdots (e-y_m)} =0 \ .\\
\end{align*}
\end{proof}
\noindent By specialization, we  can formally extend the definition
of a cell function to the case where some of the terms $v_i$ are
constant, or one of the  $v_i$ is infinite, by setting
$$\cfl v_1,\ldots, v_{i-1}, \infty, v_{i+1},\ldots, v_n\cfr =
\lim_{x\rightarrow \infty} x^2 \cfl v_1,\ldots,
v_{i-1},x,v_{i+1},\ldots, v_n\cfr$$ $$= {1 \over (v_2-v_1)\ldots
(v_{i-1}-v_{i-2})(v_{i+2}-v_{i+1}) \ldots(v_{n}-v_{n-1})(v_1-v_n) }\
.$$ \noindent This is the rational function obtained by omitting all
terms containing $\infty$. By taking the appropriate limit, it is
clear that $(\ref{cellfunctionshufflerel})$ and
$(\ref{cellfunction1shuffsare0})$ are valid in this case too. In the
case where $\{v_1,\ldots, v_n\} = \{0,1,t_1,\ldots, t_\ell,
\infty\}$ we have the formula
\begin{equation}
[v_1,\ldots, v_n]= \cfl v_1,\ldots, v_n \cfr \, dt_1dt_2\ldots
dt_\ell\ .
\end{equation}

\vspace{.1cm}
\begin{defn} A {\it $01$ cyclic (resp. dihedral) structure} is a cyclic 
(resp. dihedral)
structure on $S$ in which the numbers $1$ and $n-1$ are consecutive.
Since $z_1=0$ and $z_{n-1}=1$, a $01$ cyclic (or dihedral) structure
is a set of orderings of the set
$\{z_1,\ldots,z_n\}=\{0,t_1,\ldots,t_\ell,1, \infty\}$, in which the
elements $0$ and $1$ are consecutive.  In these terms, each dihedral
structure can be written as an ordering $(0,1,\pi)$ where $\pi$ is
some ordering of $\{t_1,\ldots,t_\ell,\infty\}$.  To each such
ordering we associate a cell-function $\cfl 0,1,\pi \cfr$, which is
called a $01$ cell-function.
\end{defn}
Since $01$ cell-functions corresponding to different $\pi$ are
clearly different, it follows that there exist exactly $(n-2)!$
distinct $01$ cell-functions $\cfl 0,1,\pi \cfr$.  To these
correspond $(n-2)!$ distinct $01$ cell-forms
$\omega_{(0,1,\pi)}=\cfl 0,1,\pi\cfr \, dt_1\ldots dt_\ell$.

\begin{thm} \label{thm01cellsspan} The set of $01$ cell-forms
$\omega_{(0,1,\pi)}$, where $\pi$ denotes any ordering of $\{t_1,\ldots,t_\ell,
\infty\}$, has cardinal $(n-2)!$ and forms a basis of $H^\ell(\Mod_{0,n},\Q)$.
\end{thm}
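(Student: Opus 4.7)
\emph{Counting.} Since $\ell+1=n-2$, there are $(n-2)!$ orderings $\pi$ of $\{t_1,\ldots,t_\ell,\infty\}$, and distinct orderings produce cell-functions with distinct denominators, so the $01$ cell-forms $\omega_{(0,1,\pi)}$ are pairwise distinct. Combined with the classical formula $\dim_\Q H^\ell(\Mod_{0,n},\Q)=(n-2)!$ (obtainable, for instance, from the Poincar\'e polynomial $\prod_{k=2}^{n-2}(1+kt)$ of $\Mod_{0,n}$, which can be derived inductively from the forgetful fibration $\Mod_{0,n+1}\to\Mod_{0,n}$ with fiber $\Pro^1$ minus $n$ points), the cardinalities match, so it suffices to prove linear independence.

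\emph{Independence via iterated residues.} I would construct, for each ordering $\pi$, a linear functional $R_\pi\colon H^\ell(\Mod_{0,n},\Q)\to\Q$ that is dual to $\omega_{(0,1,\pi)}$ in a triangular sense. By Proposition \ref{CORomegapoles} and the order formula \eqref{ordcell}, $\omega_{(0,1,\pi)}$ has a simple pole along a boundary divisor $D=D_{S_1,S_2}$ of $\overline{\Mod}_{0,n}$ exactly when the partition $S=S_1\sqcup S_2$ corresponds to a chord of the polygon for $(0,1,\pi)$, i.e., when $S_1$ and $S_2$ are two contiguous cyclic arcs. Iterating residues along a maximal nested chain of such chords (a triangulation of the polygon) yields $R_\pi$, and a judicious choice of triangulation compatible with a total order on orderings $\pi$ (for instance the triangulation obtained by successively peeling off the leftmost entry of $\pi$, combined with the lexicographic order) ensures that $R_\pi(\omega_{(0,1,\pi)})=\pm 1$ while $R_\pi(\omega_{(0,1,\pi')})=0$ for every $\pi'$ lexicographically preceding $\pi$. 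The resulting pairing matrix is upper triangular with nonzero diagonal, giving linear independence.

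\emph{Main obstacle.} The hard part is the combinatorial bookkeeping that guarantees the off-diagonal vanishings $R_\pi(\omega_{(0,1,\pi')})=0$: for any $\pi'$ preceding $\pi$ one must exhibit at least one chord in the chain associated to $\pi$ that fails to be a chord of the polygon for $(0,1,\pi')$, so that taking its residue annihilates $\omega_{(0,1,\pi')}$. This is the direct analogue of the no-broken-circuit condition in Arnol'd's basis construction for arrangement cohomology, and I would handle it by lining up each $01$ cell-form with the Arnol'd basis element indexed by the same triangulation, inheriting the unitriangularity of the transition matrix from the Orlik--Solomon side.
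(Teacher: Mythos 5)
Your counting step is correct and coincides with the paper's. For independence, though, you take a genuinely different route: the paper never proves independence directly, but instead proves that the $01$ cell-forms \emph{span} $H^\ell(\Mod_{0,n},\Q)$, by writing each Arnol'd generator $\Omega(\underline{\varepsilon})$ as a product of elementary cell-functions $F_i$, each beginning with $0,1$, and converting that product into a sum of $01$ cell-functions via the shuffle formula of proposition \ref{shufprod}; the dimension count then upgrades spanning to a basis. Your plan goes the dual way (independence plus dimension count) via iterated residues, which is viable and arguably more geometric, as it realizes directly the duality between cells and cell-forms.

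As written, however, there is a genuine gap exactly at the step you flag as the main obstacle. The off-diagonal vanishing $R_\pi(\omega_{(0,1,\pi')})=0$ is never established, and the proposed fix --- ``lining up each $01$ cell-form with the Arnol'd basis element indexed by the same triangulation, inheriting the unitriangularity of the transition matrix from the Orlik--Solomon side'' --- does not make sense as stated: the Arnol'd forms $\Omega(\underline{\varepsilon})$ are indexed by tuples $(\varepsilon_1,\ldots,\varepsilon_\ell)$, not by triangulations of labelled polygons, no correspondence between the two index sets is specified, and the unitriangularity (hence invertibility) of the transition matrix between the two families is essentially the content of the theorem, so invoking it is circular. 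The gap is fillable, and more cheaply than your plan suggests: take for $R_\pi$ the iterated residue along the fan triangulation based at the vertex separating the edges $0$ and $1$, i.e.\ along the nested chords $C_k=\{1,\pi_1,\ldots,\pi_k\}$ for $1\le k\le n-3$. A short induction shows that the only $01$ cyclic order in which every $C_k$ is contiguous is $(0,1,\pi)$ itself: contiguity of $\{1,\pi_1\}$ in $(0,1,\pi')$ forces $\pi'_1=\pi_1$ because $0$ occupies the other slot adjacent to $1$, contiguity of $\{1,\pi_1,\pi_2\}$ then forces $\pi'_2=\pi_2$ for the same reason, and so on. By proposition \ref{CORomegapoles} the residue along a non-chord vanishes, so the pairing matrix is in fact \emph{diagonal} with entries $\pm 1$, and no total order on the $\pi$'s is needed. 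You should also justify that the iterated residue descends to $H^\ell(\Mod_{0,n},\Q)$ (standard for logarithmic forms, and consistent with proposition \ref{propresformula}), but that is a minor point compared with the missing combinatorial lemma.
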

\begin{proof} The proof is based on the following well-known result due to
Arnol'd \cite{Ar}.
\begin{thm} 
A basis of $H^\ell(\Mod_{0,n},\Q)$ is given by the classes of the forms
\begin{equation}\label{Omegadefn}
\Omega(\underline{\varepsilon}) := {dt_1 \ldots dt_\ell \over
(t_1-\varepsilon_1)\ldots (t_\ell-\varepsilon_\ell)}\ ,\quad
\varepsilon_{i} \in E_i\ ,\end{equation} where $E_1=\{0,1\}$ and
$E_i =\{0,1,t_1,\ldots, t_{i-1}\}$ for $2\leq i\leq \ell$.
\end{thm}
It suffices to prove that each element
$\Omega(\underline{\varepsilon})$ in $(\ref{Omegadefn})$ can be
written as a linear combination of $01$ cell-forms.  We begin by
expressing a given rational function
${{1}\over{(t_1-\varepsilon_1)\cdots (t_\ell-\epsilon_\ell)}}$ as a
product of cell-functions and then apply proposition \ref{shufprod}.
 To every $t_i$, we associate its {\it type} $\tau(t_i)\in\{0,1\}$
 (which depends on $\varepsilon_1,\ldots, \varepsilon_\ell$)
as follows. If $\varepsilon_i=0$ then $\tau(t_i)=0$; if
$\varepsilon_i=1$, then $\tau(t_i)=1$, but if $\varepsilon_i\ne 0,1$
then $\varepsilon_i=t_j$ for some $j<i$, and the type of $t_i$ is
defined to be equal to the type of $t_j$. Since the indices
decrease,
 the type is
well-defined.

We associate a cell-function $F_i$ to each factor
$(t_i-\varepsilon_i)$ in the denominator of
$\Omega(\underline{\varepsilon})$ as follows:
\begin{equation} F_i=
\begin{cases}\ \ \cfl  0,1,t_i,\infty\cfr  &\hbox{if}\ \varepsilon_i=1\\
-\cfl  0,1,\infty,t_i\cfr  &\hbox{if}\ \varepsilon_i=0\\
\ \ \cfl  0,1,\varepsilon_i,t_i,\infty\cfr  &\hbox{if}\
\varepsilon_i\ne 1\ \hbox{and
the type}\ \tau(t_i)=1\\
-\cfl  0,1,\infty,t_i,\varepsilon_i\cfr  &\hbox{if }\varepsilon_i\ne
0\ \hbox{and the type}\ \tau(t_i)=0\ .
\end{cases}
\end{equation}
We have
$$\Omega(\underline{\varepsilon})=\Delta\prod_{i=1}^\ell F_i\ ,$$
where
$$\Delta=\prod_{j|\varepsilon_j\ne 0,1}(-1)^{\tau(\varepsilon_j)-1}
(\varepsilon_j-\tau(\varepsilon_j))$$
is exactly the factor occurring when multiplying cell-functions as in
proposition
\ref{shufprod}.  This product can be expressed as a shuffle
product, which is a sum of
cell-functions.  Furthermore each one corresponds to a cell beginning
$0,1,\ldots$ since this is the case for all of the $F_i$.
The $01$-cell forms thus span $H^\ell(\Mod_{0,n},\Q)$.
Since there are exactly $(n-2)!$ of them, and
since $\dim H^\ell(\Mod_{0,n},\Q)=(n-2)!$, they must form a basis.
\end{proof}

\vspace{.3cm}
\subsection{Pairs of polygons and multiplication}\label{prodmapsection}

\begin{defn}\label{polspace} 
Let $S=\{1,\ldots,n\}$, and let ${\cal P}_S$ denote the 
$\Q$-vector space generated by the set of cyclic structures $\gamma$ on $S$,
i.e. by planar polygons with $n$ sides indexed by $S$.  Let
$\tilde{\cal P}_S$ denote the $\Q$-vector
space generated by the set of cyclic structures $\gamma$ on $S$,
modulo the relation $\gamma=(-1)^n\overleftarrow{\gamma}$, where
$\overleftarrow{\gamma}$  denotes the cyclic structure with the
opposite orientation to $\gamma$.  Throughout this chapter we will study
$\tilde{\cal P}_S$, but the full vector space ${\cal P}_S$ will be studied
in chapter 3.
\end{defn}

\subsubsection{Shuffles of polygons}
Let $T_1,T_2$ denote two subsets of $Z=\{z_1,\ldots,z_n\}$
satisfying:
\begin{eqnarray}\label{Tprodconds1}
T_1\cup T_2 & =& Z \\
|T_1\cap T_2| & = & 3\ \nonumber.
\end{eqnarray}
Let $E=\{z_{i_1},z_{i_2},z_{i_3}\}$ denote the set of three points common to
$T_1$ and $T_2$.  

%
\begin{defn}
Consider elements $\gamma_1$ and $\gamma_2$ in $\tilde{\cal P}_S$ coming from
a choice of cyclic structure on $T_1$ and $T_2$ respectively.  For every such 
pair, define the {\it shuffle relative to the set $E$ of three points of
intersection}, $\gamma_1\sha_{\! E}\gamma_2$ by taking the unique liftings of 
$\gamma_1$ and $\gamma_2$ to elements $\bar\gamma_1$  and $\bar\gamma_2$ of ${\cal P}_S$ such 
that the cyclic order on $E$ obtained by restricting the cyclic order $\bar\gamma_1$ on $T_1$ 
(resp. $\bar\gamma_2$ on $T_2$) is equal to the standard cyclic order on $E$, and setting
\begin{equation}
\label{Gf}
\gamma_1\sha_{\! E}\gamma_2=\sum_{{{\bar\gamma\in{\cal P}_S}\atop{\bar\gamma|_{T_1}=\bar\gamma_1,
\bar\gamma|_{T_2}=\bar\gamma_2}}} \gamma,
\end{equation}
where $\gamma$ denotes the image in $\tilde{\cal P}_S$ of $\bar\gamma\in {\cal P}_S$.
\end{defn}

\vspace{.2cm}
We can write the shuffle with respect to three points using the following simple formula
(compare with (2.10)).  If $\{z_1,\ldots,z_n\}=\{0,1,\infty,t_1,\ldots,t_\ell\}$ with
$E=\{0,1,\infty\}$, we write $\gamma_1=(0, A_{1,2}, 1, A_{2,3},
\infty, A_{3,1})$ where $T_1$ is the disjoint union of
$A_{1,2}, A_{2,3}, A_{3,1}$ and $0,1,\infty$, and
$\gamma_2=(0, B_{1,2}, 1, B_{2,3}, \infty, B_{3,1})$, where $T_2$ is
the disjoint union of $B_{1,2}, B_{2,3}, B_{3,1}$ and $0,1,\infty$.
Then $\gamma_1\sha_{\! E} \gamma_2$ is the sum of polygons in $\tilde{\cal P}_S$ given by
$$\gamma= (0, A_{1,2}\sha B_{1,2}, 1, A_{2,3}\sha B_{2,3}, \infty, A_{3,1}\sha B_{3,1})\ .$$

\begin{example} \label{exsh1}  Let $T_1=\{0,1,\infty,t_1,t_3\}$ and
$T_2=\{0,1,\infty,t_2\}$.  Let $\gamma_1$ and $\gamma_2$ denote the elements of
$\tilde {\cal P}_S$ given by cyclic orders 
$(0,t_1,1,t_3,\infty)$ and $(0,\infty,t_2,1)$.  Then we take the liftings
$\bar\gamma_1=(0,t_1,1,t_3,\infty)$, $\bar \gamma_2=(-1)^4(0,1,t_2,\infty)$, 
and we find that 
$$\gamma_1\sha\gamma_2=(0,t_1,1,t_2,t_3,\infty)+(0,t_1,1,t_3,t_2,\infty)\in \tilde{\cal P}_S.$$
We will often write, for
example, $(0,t_1,1,t_2\sha t_3,\infty,t_4)$ for the right-hand side.
\end{example}

\vspace{.2cm}
\subsubsection{Multiplying pairs of polygons: the modular shuffle
  relation}
In this section, we consider elements of $\tilde{\cal P}_S \otimes 
\tilde{\cal P}_S$.  We use the notation $(\gamma,\eta)$ for 
$\gamma\otimes \eta$ where $\gamma,\eta\in \tilde{\cal P}_S$.  When $\gamma$
and $\eta$ are polygons (as opposed to linear combinations),
we can associate a geometric meaning to a pair
of polygons as follows. The left-hand polygon $\gamma$, which we
will write using round parentheses, for example
$(0,t_1,\ldots,t_\ell,1,\infty)$, is associated to the real cell
$X_\gamma$ of the moduli space $\Mod_{0,n}$ associated to the cyclic
structure. The right-hand polygon $\eta$, which we will write using
square parentheses, for example $[0,t_1,\ldots,t_\ell,1,\infty]$, is
associated to the cell-form $\omega_\eta$ associated to the cyclic
structure.  The pair of polygons will be associated to the (possibly
divergent) integral $\int_{X_\gamma}\omega_\eta$.  This geometric interpretation
extends in the obvious way to all pairs of elements $(\gamma,\eta)$.
In the following section we will investigate in detail the map from pairs of 
polygons to integrals.

\begin{defn}  Given sets $T_1, T_2$ as in (\ref{Tprodconds1}), the
{\it modular shuffle product} on the vector space $\tilde{\cal P}_S\otimes
\tilde{\cal P}_S$ is defined by
\begin{equation}\label{multpairs}
(\gamma_1,\eta_1)\sha (\gamma_2,\eta_2)=(\gamma_1\sha \gamma_2,
\eta_1\sha\eta_2),
\end{equation}
for pairs of polygons $(\gamma_1,\eta_1)\sha
(\gamma_2,\eta_2)$, where $\gamma_i$ and $\eta_i$ are cyclic structures
on $T_i$ for $i=1,2$.

\end{defn}

\begin{example} The following product of two polygon pairs is given by
\begin{align*}
\bigl(
(0,t_1,1,\infty, t_4), [0,\infty,t_1, t_4,1] \bigr) & \bigl( (
0,t_2,1,t_3,\infty), [0,t_3,t_2,\infty,1] \bigr)\\
& = - \bigl( (0,t_1\sha t_2, 1, t_3, \infty,t_4), [0,t_3,t_2,\infty,
t_1,t_4,1] \bigr).
\end{align*}
\end{example}

\vspace{.3cm} Let us now explain the geometric meaning of the modular shuffle
product (\ref{multpairs}), in terms of integrals of forms on moduli space.
Recall that a \emph{product map} between moduli spaces was defined
in\cite{Br2} as  follows. Let $T_1,T_2$ denote two subsets of
$Z=\{z_1,\ldots,z_n\}$ as in (\ref{Tprodconds1}), 
Then we can consider the product of forgetful maps:
\begin{equation}
f=f_{T_1}\times f_{T_2} : \Mod_{0,n} \To \Mod_{0,T_1} \times
\Mod_{0,T_2}\ .
\end{equation}
The map $f$ is a birational embedding
because $$\dim \Mod_{0,S} = |S|-3= |T_1|-3+|T_2|-3= \dim
\Mod_{0,T_1}\times \Mod_{0,T_2}\ .$$

If $f$ is a product map as above and
$z_i,z_j,z_k$ are the three common points of $T_1$ and $T_2$,
use an element $\alpha\in \PSL_2$ to map $z_i$ to $0$, $z_j$ to $1$
and $z_k$ to $\infty$.  Let $t_1,\ldots,t_\ell$ denote the images of
$z_1,\ldots,z_n$ (excluding $z_i,z_j,z_k$) under $\alpha$.
Given the indices $i$, $j$ and $k$, the product map is then determined by
specifying a partition of $\{t_1,\ldots,t_\ell\}$ into $S_1$ and $S_2$.
We use the notation $T_i=\{0,1,\infty\}
\cup S_i$ for $i=1,2$.

The shuffle product formula (\ref{multpairs}) on pairs of polygons
is motivated by the formula for multiplying integrals given in the following
proposition.

\begin{prop}\label{prodmaprel} Let $S=\{1,\ldots,n\}$, and let $T_1$ and $T_2$ be subsets
of $S$ as in (\ref{Tprodconds1}), of orders $r+3$ and $s+3$ respectively.
Let $\omega_1$ (resp. $\omega_2$) be a cell-form on
$\Mod_{0,r}$  (resp. on $\Mod_{0,s}$), and let $\gamma_1$ and $\gamma_2$
denote cyclic orderings on $T_1$ and $T_2$.  Then the product rule for
integrals is given by the following formula, called the modular shuffle
relation:
\begin{equation}\label{temp}\int_{X_{\gamma_1}} \omega_1\int_{X_{\gamma_2}}
\omega_2=\int_{X_{\gamma_1\sha\gamma_2}} \omega_1\sha \omega_2,
\end{equation}
where $\omega_1\sha\omega_2$ converges on the cell $X_\gamma$ for
each term $\gamma$ in $\gamma_1\sha\gamma_2$.

\end{prop}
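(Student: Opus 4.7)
The plan is to apply the change of variables formula along the birational embedding $f = f_{T_1} \times f_{T_2}\colon \Mod_{0,n} \to \Mod_{0,T_1} \times \Mod_{0,T_2}$ and match the resulting expression combinatorially with the shuffle product using Proposition \ref{shufprod}. Since both sides of $f$ have the same dimension $\ell = r + s$, Fubini's theorem applied to the absolutely convergent LHS, followed by change of variables, gives
$$\int_{X_{\gamma_1}} \omega_1 \cdot \int_{X_{\gamma_2}} \omega_2 \; = \; \int_{X_{\gamma_1} \times X_{\gamma_2}} \omega_1 \wedge \omega_2 \; = \; \int_{f^{-1}(X_{\gamma_1} \times X_{\gamma_2})} f^*(\omega_1 \wedge \omega_2).$$

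Next, I would identify both the domain and the integrand on the right-hand side combinatorially. Using the $\PSL_2$ action to normalize the three common points $E = T_1 \cap T_2$ to $\{0,1,\infty\}$, a real point of $\Mod_{0,n}$ maps into $X_{\gamma_1} \times X_{\gamma_2}$ iff its restriction to $T_i$ lies in the cell of order $\gamma_i$. This is precisely the combinatorial condition defining the shuffle $\gamma_1 \sha_E \gamma_2$: the cells of $\Mod_{0,n}(\R)$ whose cyclic structure restricts to $\gamma_1$ on $T_1$ and $\gamma_2$ on $T_2$ are exactly those interleaving $T_1 \setminus E$ and $T_2 \setminus E$ between $0,1,\infty$ in a way consistent with both $\gamma_1$ and $\gamma_2$. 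Hence
$$f^{-1}(X_{\gamma_1} \times X_{\gamma_2}) \; = \; \coprod_{\gamma \in \gamma_1 \sha_E \gamma_2} X_\gamma.$$

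For the integrand, write $\omega_i = \omega_{\eta_i}$ for cyclic structures $\eta_i$ on $T_i$. I would lift both forms to the $\PSL_2$-invariant counterparts $\widetilde{\omega}_{\eta_i}$ on $(\Pro^1)^{T_i}_*$ used in Lemma \ref{lemsymaction}, so that the forgetful maps act by restriction of variables. The wedge product $\widetilde{\omega}_{\eta_1} \wedge \widetilde{\omega}_{\eta_2}$ on $(\Pro^1)^S_*$ differs from the lifted cell-form on $\Mod_{0,n}$ by exactly the factor $\cfl \eta_E \cfr$ coming from the three common coordinates. After normalizing $E = \{0,1,\infty\}$ so that $\cfl \eta_E \cfr$ reduces to a sign, Proposition \ref{shufprod} yields
$$f^*(\omega_{\eta_1} \wedge \omega_{\eta_2}) \; = \; \sum_{\eta \in \eta_1 \sha_E \eta_2} \pm \, \omega_\eta,$$
which is precisely the form-shuffle appearing in the modular shuffle product $\omega_1 \sha \omega_2$. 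Combining with the decomposition of the preimage gives the formula \eqref{temp}.

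The main obstacle will be Step 3, which requires careful bookkeeping of signs. One must track the $(-1)^n$ ambiguity of Remark \ref{grlem} between the two orientations of each cyclic structure, the sign arising when using $\PSL_2$ to send the three common points of $E$ to $0,1,\infty$, and the sign in the specialization formula extending $\cfl \cdots \cfr$ to the point at infinity. The secondary technical point is the convergence claim: for each pair $(\gamma, \eta)$ in the modular shuffle, one must verify that $\omega_\eta$ has no poles along $\partial X_\gamma$. By Proposition \ref{CORomegapoles}, this reduces to showing that no boundary divisor of $X_\gamma$ corresponds to a chord of the polygon $\eta$, which follows directly from the shuffle construction, since any boundary divisor of a cell in $\gamma_1 \sha_E \gamma_2$ respects the partition of $S$ into $(T_1 \setminus E)$-elements and $(T_2 \setminus E)$-elements fixed by the three common markers, and the same holds for the chords of $\eta \in \eta_1 \sha_E \eta_2$.
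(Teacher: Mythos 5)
Your derivation of the identity itself follows the paper's route exactly: Fubini plus the change of variables along the product map $f$, the decomposition $f^{-1}(X_{\gamma_1}\times X_{\gamma_2})=\coprod_{\gamma\in\gamma_1\sha_E\gamma_2}X_\gamma$, and the identification $f^*(\omega_1\wedge\omega_2)=\omega_1\sha\omega_2$ via proposition \ref{shufprod} (the paper does this in coordinates, normalizing $E=\{0,1,\infty\}$ so that the sign issues you worry about are absorbed into a ``without loss of generality'' choice of the cyclic representatives restricting to the standard order on $E$; your detour through the lifts $\widetilde\omega$ is only a cosmetic variant). So the main computation is fine.

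The convergence part, however, contains a genuine error. You propose to verify that \emph{each individual term} $\omega_\eta$, $\eta\in\eta_1\sha_E\eta_2$, has no poles along $\partial X_\gamma$ for each $\gamma\in\gamma_1\sha_E\gamma_2$, claiming this ``follows directly from the shuffle construction.'' This is false. Take $T_1=\{0,1,\infty,t_1,t_2\}$, $T_2=\{0,1,\infty,t_3,t_4\}$, with the convergent $\zeta(2)$ pairs $\gamma_1=(0,t_1,t_2,1,\infty)$, $\omega_1=[0,1,t_1,\infty,t_2]$ and $\gamma_2=(0,t_3,t_4,1,\infty)$, $\omega_2=[0,1,t_3,\infty,t_4]$. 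Then $\gamma=(0,t_1,t_3,t_2,t_4,1,\infty)$ occurs in $\gamma_1\sha_E\gamma_2$ and $\eta=(0,1,t_1,t_3,\infty,t_2,t_4)$ occurs in $\eta_1\sha_E\eta_2=(0,1,t_1\sha t_3,\infty,t_2\sha t_4)$, and the stable partition $\{t_1,t_3\}\cup\{0,1,\infty,t_2,t_4\}$ is a chord of \emph{both} polygons: $t_1,t_3$ are adjacent in $\gamma$ and in $\eta$. By proposition \ref{CORomegapoles}, $\omega_\eta$ therefore has a simple pole along a facet of $\overline{X_\gamma}$; the pole only cancels against the companion term $(0,1,t_3,t_1,\infty,t_2,t_4)$ of the shuffle, whose residue combines with it into a one-element shuffle $(t_1\sha t_3)$, which vanishes. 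The obstruction to your argument is exactly chords of the form $\{x,y\}$ with $x\in T_1\setminus E$, $y\in T_2\setminus E$: convergence of the factors only forbids common chords whose intersection with each $T_i$ is again a chord of $T_i$, and says nothing about these mixed pairs. The statement to prove is only that the \emph{sum} $\omega_1\sha\omega_2$ converges on each $X_\gamma$, and the clean argument (the paper's) is geometric rather than term-by-term: since $\omega_1,\omega_2$ have no poles on $\overline{X}_{\gamma_1}$, $\overline{X}_{\gamma_2}$, the form $\omega_1\wedge\omega_2$ has none on $\overline{X}_{\gamma_1}\times\overline{X}_{\gamma_2}$, hence $f^*(\omega_1\wedge\omega_2)=\omega_1\sha\omega_2$ has no poles on the closure of $f^{-1}(X_{\gamma_1}\times X_{\gamma_2})$, which contains $\overline{X_\gamma}$ for every $\gamma$ in the cell shuffle.
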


\begin{proof} The subsets $T_1$ and $T_2$ correspond to a product  map
$$f:\Mod_{0,n}\rightarrow \Mod_{0,r}\times \Mod_{0,s}.$$
The pullback formula gives a multiplication law on the pair of integrals:
\begin{equation}\label{temp2}\int_{X_{\gamma_1}} \omega_1\int_{X_{\gamma_2}}
\omega_2= \int_{X_{\gamma_1}\times X_{\gamma_2}} \omega_1\wedge\omega_2=
\int_{f^{-1}(X_{\gamma_1}\times X_{\gamma_2})} f^*(\omega_1\wedge\omega_2).
\end{equation}
The preimage $f^{-1}(X_{\gamma_1}\times X_{\gamma_2})$ decomposes
into a disjoint union of cells of $\Mod_{0,n}$, which are precisely the cells
given by cyclic orders of $\gamma_1\sha \gamma_2$.
In other words,
$$f^{-1}(X_{\gamma_1}\times X_{\gamma_2})=\sum_{\gamma\in \gamma_1\sha\gamma_2}
X_{\gamma}\ ,$$ where the sum denotes a disjoint union. Now we can
assume without loss of generality that $T_1=\{0,1,\infty,
t_1,\ldots, t_k\}$, $T_2=\{0,1,\infty, t_{k+1},\ldots, t_\ell\}$ and
that $\delta_1,\delta_2$ are the cyclic structures on $T_1,T_2$
corresponding to $\omega_1,\omega_2$, respectively, where $\delta_1,
\delta_2$ restrict to the standard cyclic order  on $0,1,\infty$.
Then, in cell function notation,
$$f^*(\omega_{1}\wedge \omega_{2}) 
= \cfl \delta_1 \cfr \cfl \delta_2 \cfr \,dt_1\ldots dt_\ell= {\cfl
\delta_1 \sha_{\{0,1,\infty\}} \delta_2 \cfr \over \cfl
0,1,\infty\cfr} \,dt_1\ldots dt_\ell=\omega_1\sha \omega_2\ ,$$ by
proposition $\ref{shufprod}$. Since $\omega_1$ and $\omega_2$
 converge on the closed cells $\overline{X}_{\gamma_1}$ and
 $\overline{X}_{\gamma_2}$ respectively, $\omega_1\wedge \omega_2$
 has no poles on the contractible set $\overline{X}_{\gamma_1}\times \overline{X}_{\gamma_2},$
and therefore
 $\omega_1\sha\omega_2=f^*(\omega_1\wedge\omega_2)$ has no poles on the closure of
 $f^{-1}(X_{\gamma_1}\times X_{\gamma_2})$. But $\sum_{\gamma\in \gamma_1\sha\gamma_2}
X_{\gamma}$ is a cellular decomposition of
$f^{-1}(X_{\gamma_1}\times X_{\gamma_2})$, so, in particular,
$\omega_1 \sha \omega_2$ can have no poles along the closure of each
cell $ X_{\gamma},$ where $\gamma\in \gamma_1\sha\gamma_2$.
\end{proof}

\subsubsection{$\Sym(n)$ action on pairs of polygons}
The symmetric group $\Sym(n)$ acts on a pair of polygons by
permuting their labels in the obvious way, and this extends to the
vector space $\tilde{\cal P}_S\otimes \tilde{\cal P}_S$ by linearity. If
$\tau:\Mod_{0,n}\rightarrow \Mod_{0,n}$  is an element of $\Sym(n)$,
then the corresponding action on integrals is given by the pullback
formula:
\begin{equation}\label{permaction}
\int_{X_\gamma}\omega_\eta = \int_{\tau(X_\gamma)}
\tau^*(\omega_\eta) = \int_{X_{\tau(\gamma)}} \omega_{\tau(\eta)}\ .
\end{equation}
Suppose that $\tau$ belongs to the dihedral group which preserves
the dihedral structure underlying a cyclic structure $\gamma$. Let
$\epsilon=1$ if $\tau$ preserves $\gamma$, and $\epsilon=-1$ if
$\tau$ reverses its orientation. We have the following {\it dihedral
relation} between convergent integrals:
\begin{equation}\label{dihrel}
\int_{X_\gamma} \omega_\eta = (-1)^\epsilon\int_{X_\gamma}
\tau^*(\omega_\eta)=(-1)^\epsilon\int_{X_\gamma} \omega_{\tau(\eta)}.
\end{equation}

Both the formulas (\ref{permaction}) and (\ref{dihrel}) extend to linear 
combinations of integrals of cell-forms as long as the linear combination 
converges over the integration domain.  This convergence is not a consideration
when working with pairs of polygons rather than integrals.

\vspace{.2cm}
\begin{example}
The form corresponding to $\zeta(2,1)$ on $\Mod_{0,6}$ is
$$\frac{dt_1dt_2dt_3}{(1-t_1)(1-t_2)t_3} = [0,1,t_1,t_2,\infty,t_3]
+ [0,1,t_2,t_1,\infty,t_3],$$ which gives  $\zeta(2,1)$ after
integrating over the standard cell.  By applying the rotation
$(1,2,3,4,5,6)$, a dihedral rotation of the standard cell, to this
form, one obtains
\begin{align*}[t_1,\infty,t_2,t_3,0,1] +
  [t_1,\infty,t_3,t_2,0,1] & =
[0,1,t_1,\infty, t_2,t_3] +
[0,1,t_1,\infty,t_3,t_2]
\\ & ={{dt_1dt_2dt_3}\over{(1-t_1)t_2t_3}},\end{align*}
which gives $\zeta(3)$ after integrating over the standard cell.
Therefore, we have the following relation on linear combinations of
pairs of polygons:
\begin{equation}
\begin{split}
&\bigl( (0,t_1,t_2,t_3,1,\infty), [0,1,t_1,t_2,\infty,t_3] +
[0,1,t_2,t_1,\infty,t_3] \bigr)\\
&\qquad = \bigl( (0,t_1,t_2,t_3,1,\infty), [0,1,t_1,\infty, t_2,t_3] +
[0,1,t_1,\infty,t_3,t_2]\bigr)
\end{split}
\end{equation}
which on the level of integrals corresponds to
\begin{align*}
\zeta(2,1)=\int_{X_{3,\delta}}
\frac{dt_1dt_2dt_3}{t_3(1-t_2)(1-t_1)} &=
\int_{X_{3,\delta}} \frac{dt_1dt_2dt_3}{t_3t_2(1-t_1)}=\zeta(3). \\
\end{align*}
\end{example}
\begin{rem}This identity is an example of the well-known duality relation
between multiple zeta values given as follows.  Every tuple $(n_1,\ldots,n_r)$ 
of positive integers with $n_1>1$ is uniquely associated to a word 
$x^{n_1-1}y\cdots x^{n_r-1}y$ in non-commutative variables $x$, $y$.  Let
$(m_1,\ldots,m_s)$ be the tuple thus associated to the word 
$xy^{n_r-1}\cdots xy^{n_1-1}$.  The duality relation is
$$\zeta(n_1,\ldots,n_r)=\zeta(m_1,\ldots,m_s).$$
This relation follows from the dihedral relation above, using the reflection
permutation corresponding to the reflection of the polygon $(0,1,t_1,\ldots,
t_{n-3},\infty)$ over the symmetry axis through the side labeled $\infty$.
\end{rem}
\vspace{.3cm}
\subsubsection{Standard pairs and the product map relations}\label{prodmaps}
A standard pair of polygons is a pair $(\delta,\eta)$ where the
left-hand polygon is the standard cyclic structure.  Let
$S=\{1,\ldots,n\}$, and $T_1\cup T_2=S$ with $T_1\cap
T_2=\{0,1,\infty\}$  be as above, and let $\gamma_1$ and $\gamma_2$
be cyclic orders on $T_1$ and $T_2$. In the present section we show
how for each such $\gamma_1,\gamma_2$, we can modify the modular
shuffle relation to construct a multiplication law on standard
pairs.

\begin{defn}\label{prodmaprel2}
Let $\delta_1$ and $\delta_2$ denote the standard orders on $T_1$
and $T_2$.  Then there is a unique permutation $\tau_i$ mapping
$\delta_i$ to $\gamma_i$ such that $\tau_i(0)=0$, for $i=1,2$.  The
multiplication law, denoted by the symbol $\times$, and called the
{\it product map relation}, is defined by
\begin{equation}\label{pm}
\begin{split}
(\delta_1,\omega_1)\times(\delta_2,\omega_2)&=(\gamma_1,\tau_1(\omega_1))
\sha(\gamma_2,\tau_2(\omega_2))\\
&=(\gamma_1\sha\gamma_2,
\tau_1(\omega_1)\sha\tau_2(\omega_2))\\
&=\sum_{\gamma\in\gamma_1\sha\gamma_2}(\delta,\tau_\gamma^{-1}(
\tau_1(\omega_1)\sha\tau_2(\omega_2))),
\end{split}
\end{equation}
where for each $\gamma\in\gamma_1\sha\gamma_2$, $\tau_\gamma$ is
the unique permutation such that $\tau_\gamma(\delta)=\gamma$ and
$\tau_\gamma(0)=0$.
\end{defn}
\begin{example} Let $S=\{0,1,\infty,t_1,t_2,t_3,t_4\}$, $T_1=\{0,1,\infty,
t_1,t_4\}$ and $T_2=\{0,1,\infty,t_2,t_3\}$.  Let the cyclic orders
on $T_1$ and $T_2$ be given by $\gamma_1=(0,t_1,1,\infty,t_4)$ and
$\gamma_2=(0,t_2,1,t_3,\infty)$.  Applying the product map relation
to the pairs of polygons below yields
\begin{equation}
\begin{split}
\bigl((0,t_1,t_4,1,\infty),&[0,1,t_1,\infty,t_4]\bigr)\times
\bigl((0,t_2,t_3,1,\infty),[0,1,t_2,\infty,t_3]\bigr)\\
&=\bigl( (0,t_1,1,\infty, t_4), [0,\infty,t_1, t_4,1] \bigr) \sha\bigl( (
0,t_2,1,t_3,\infty), [0,t_3,t_2,\infty,1] \bigr)\\
& = - \bigl( (0,t_1,t_2, 1, t_3, \infty,t_4), [0,t_3,t_2,\infty,
t_1,t_4,1] \bigr)\\
 &\qquad\qquad\qquad - \bigl( (0,t_2,t_1, 1, t_3, \infty,t_4), [0,t_3,t_2,
\infty, t_1,t_4,1] \bigr)\\
&=\bigl((0,t_1,t_2,t_3,t_4,1,\infty),[0,t_3,\infty,t_1,1,t_2,t_4]+
[0,t_3,\infty,t_2,1,t_1,t_4].
\end{split}
\end{equation}
\end{example}

In terms of integrals, this corresponds to the relation
\begin{equation}
\begin{split}
\zeta(2)^2&=\int_{X_{5,\delta}} {{dt_1dt_4}\over{(1-t_1)t_4}}
\int_{X_{5,\delta}} {{dt_2dt_3}\over{(1-t_2)t_3}}\\
&=\int_{X_{7,\delta}} {{dt_1dt_2dt_3dt_4}\over{t_4(t_4-t_2)(1-t_2)(1-t_1) t_3}}
+{{dt_1dt_2dt_3dt_4}\over{t_4(t_4-t_1)(1-t_1)(1-t_2) t_3}}\\
\end{split}
\end{equation}
We will show in $\S\ref{calculations}$ that the last  two integrals
evaluate to  ${7\over 10}\zeta(2)^2$ and ${3\over 10}\zeta(2)^2$
respectively. \vspace{.3cm}
\subsection{The algebra of cell-zeta values} \label{cellalg}

\begin{defn}
Let ${\cal C}$ denote the $\Q$-subvector space of $\R$ generated by the
integrals $\int_{X_{n,\delta}} \omega$, where $X_{n,\delta}$ denotes
the standard cell of $\Mod_{0,n}$ for $n\ge 5$ and $\omega$ is a
holomorphic $\ell$-form on $\Mod_{0,n}$ with logarithmic
singularities at infinity (thus a linear combination of $01$
cell-forms) which converges on $X_{n,\delta}$.  We call these
numbers {\it cell-zeta values}.  The existence of product map
multiplication laws in proposition \ref{prodmaprel} imply that
${\cal C}$ is in fact a $\Q$-algebra.
\end{defn}

\begin{thm}\label{brownsthesis} The $\Q$-algebra ${\cal C}$ of cell-zeta values is isomorphic
to the $\Q$-algebra ${\cal Z}$ of multizeta values.
\end{thm}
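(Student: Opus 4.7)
The plan is to establish the equality ${\cal C}={\cal Z}$ by proving two inclusions, each of which reduces to a known ingredient already cited in the introduction.

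For the inclusion ${\cal Z}\subseteq {\cal C}$, I would start from Kontsevich's iterated integral representation (\ref{itint}) of any multiple zeta value $\zeta(n_1,\ldots,n_r)$ as an integral over the standard simplex $0<t_1<\cdots<t_\ell<1$ of the form $\prod_i dt_i/(\varepsilon_i-t_i)$ with $\varepsilon_i\in\{0,1\}$, $\varepsilon_1=1$, $\varepsilon_\ell=0$. Under the identification (\ref{simplicialisom}), the domain of integration is precisely the standard cell $X_{n,\delta}\subset \Mod_{0,n}(\R)$ with $n=\ell+3$, and the integrand is a top-dimensional regular logarithmic form on $\Mod_{0,n}$. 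The boundary conditions $\varepsilon_1=1$ and $\varepsilon_\ell=0$ are exactly what is needed to guarantee the absence of poles along the facets $t_1=0$ and $t_\ell=1$ of the standard cell, while all other factors $(\varepsilon_i-t_i)$ with $\varepsilon_i\in\{0,1\}$ are nonvanishing on the open cell and have the correct behaviour on its closure. Thus the integrand is a convergent form on $\overline X_{n,\delta}$, and Arnol'd's basis together with Theorem \ref{thm01cellsspan} lets us expand it as a $\Q$-linear combination of $01$ cell-forms. Hence the integral is a cell-zeta value, so $\zeta(n_1,\ldots,n_r)\in {\cal C}$.

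For the inclusion ${\cal C}\subseteq {\cal Z}$, I would quote Brown's theorem from \cite{Br2}: every period of $\Mod_{0,n}$, that is, every integral over $X_{n,\delta}$ of a logarithmic top form on $\Mod_{0,n}$ which converges on the closure of the standard cell, is a $\Q$-linear combination of multiple zeta values. By Definition \ref{cellform} and the discussion preceding Theorem \ref{brownsthesis}, an arbitrary generator of ${\cal C}$ is exactly an integral of this type, so it lies in ${\cal Z}$.

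Combining the two inclusions shows that ${\cal C}$ and ${\cal Z}$ coincide as $\Q$-subspaces of $\R$. Both are $\Q$-algebras: ${\cal Z}$ by the classical shuffle (or stuffle) relations applied to (\ref{itint}), and ${\cal C}$ by the product map relations of Proposition \ref{prodmaprel}. Since they are equal as subsets of $\R$, they are isomorphic as $\Q$-algebras.

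The main obstacle here is not in the present paper but in the cited Brown's theorem: it is precisely the hard input that promotes the purely combinatorial/geometric description of cell-zeta values into a statement about the familiar algebra of multizetas. The other direction, while conceptually straightforward, still requires one to verify that Kontsevich's integrand really does converge on the closure of the standard cell inside the compactification $\overline{\Mod}_{0,n}$, which one may check directly from (\ref{ordcell}) by decomposing $\prod dt_i/(\varepsilon_i - t_i)$ into cell-forms via Proposition \ref{shufprod} and confirming that every resulting $01$ cell-form vanishes to the correct order along each boundary divisor of $X_{n,\delta}$.
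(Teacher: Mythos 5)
Your proposal is correct and follows essentially the same route as the paper: the inclusion ${\cal Z}\subseteq{\cal C}$ via Kontsevich's representation (\ref{itint}) of multizetas as convergent integrals over the standard cell, and the inclusion ${\cal C}\subseteq{\cal Z}$ by quoting Brown's theorem from \cite{Br2}. The extra verifications you mention (expanding the Kontsevich integrand in $01$ cell-forms via theorem \ref{thm01cellsspan} and checking convergence via (\ref{ordcell})) are consistent with, though slightly more detailed than, the paper's brief argument.
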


\begin{proof} Multizeta values are real numbers which can all be
expressed as integrals $\int_{X_{n,\delta}} \omega$
where $\omega$ is an $\ell$-form of the form
\begin{equation}\label{Kont}
\omega=(-1)^d\prod_{i=1}^\ell {{d\underline{t}}\over{t_i-\epsilon_i}},
\end{equation}
where $\epsilon_1=0$, $\epsilon_i\in \{0,1\}$ for $2\le i\le
\ell-1$, $\epsilon_\ell=1$, and $d$ denotes the number of $i$ such
that $\epsilon_i=1$. Since each such form converges on
$X_{n,\delta}$, the multizeta algebra ${\cal Z}$ is a subalgebra of
${\cal C}$.  The converse is a consequence of the following theorem
due to F. Brown \cite{Br2}.
\begin{thm} If $\omega$ is a holomorphic $\ell$-form on $\Mod_{0,n}$ with
logarithmic singularities at infinity and convergent on
$X_{n,\delta}$, then $\int_{X_{n,\delta}} \omega$ is $\Q$-linear
combination of multizeta values.
\end{thm}
Thus, ${\cal C}$ is also a subalgebra of ${\cal Z}$, proving the equality.
\end{proof}

The structure of the formal multizeta algebra, generated by symbols (formally
representing integrals of the form (\ref{Kont})) subject to relations such as
shuffle and stuffle relations, has been much studied.  The present article 
provides a different approach to the study of this algebra, by turning instead 
to the study of a formal version of ${\cal C}$.

\begin{defn}\label{formalcell} Let $|S|\geq 5$.
The {\it formal algebra of cell-zeta values} ${\cal FC}$ is defined as
follows.  Let ${\cal A}$ be the vector space of formal linear combinations
of standard pairs of polygons in $\tilde{\cal P}_S\otimes \tilde{\cal P}_S$
$$\sum_i a_i(\delta,\omega_i)$$
such that the associated $\ell$-form $\sum_i a_i\omega_i$ converges
on the standard cell $X_{n,\delta}$.  Let ${\cal FC}$ denote the
quotient of ${\cal A}$ by the following families of relations.
\end{defn}

\begin{defn}\label{threerels} The three families of relations defining
${\cal FC}$ are as follows:
\begin{itemize}
\item{\it Product map relations.\ \ }{These relations were defined in section
\ref{prodmapsection}. For every choice of subsets $T_1, T_2$ of
$S=\{1,\ldots,n\}$ such that  $T_1\cup T_2=S$ and $|T_1\cap T_2|=3$,
and every choice of cyclic orders $\gamma_1, \gamma_2$ on $T_1,
T_2$, formula (\ref{pm}) gives a multiplication law expressing the
product of any two standard pairs of polygons of sizes $|T_1|$ and
$|T_2|$ as a linear combination of standard pairs of polygons of
size $n$.}
\item{\it Dihedral relations.\ \ }{For $\sigma$ in the dihedral group
associated to $\delta$, i.e. $\sigma(\delta)=\pm \delta$, there is a
dihedral relation  $(\delta,\omega)=(\sigma(\delta),
\sigma(\omega))$.}
\item{\it Shuffles with respect to one element.\ \ }{The linear combinations
of pairs of polygons $(\delta,(A,e)\sha_e (B,e))$ where $A$ and $B$ are
disjoint of length $n-1$ are zero, as in (\ref{cellfunction1shuffsare0}).}
\end{itemize}

\end{defn}

With the goal of approaching the combinatorial conjectures given in the
introduction, the purpose of the next chapters is to give an explicit
combinatorial description of a set of generators for ${\cal FC}$.  We do
this in two steps.  First we define the notion of a linear
combination of polygons convergent with respect to a chord of the standard
polygon $\delta$, and thence, the notion of a linear combination of
polygon convergent with respect to the standard polygon.  We
exhibit an explicit basis, the basis of {\it Lyndon insertion words and
shuffles} for the subspace of such linear combinations.  In the subsequent
chapter, we deduce from this a set of generators for the formal cell-zeta
value algebra ${\cal FC}$ and also, as a corollary, a basis for the subspace
of the cohomology space $H^\ell(\Mod_{0,n})$ consisting of classes of 
forms converging on the standard cell.

\vspace{.5cm}
\begin{rem}\label{laterremark} One of the most intriguing and important
questions concerning ${\cal FC}$ is the conjectural isomorphism with
the algebra of formal multizeta values ${\cal FZ}$ mentioned earlier
in conjecture \ref{wewish}.  In fact, there is a very natural
``candidate map'' from the generators of ${\cal FZ}$ to elements of
${\cal FC}$, coming from simply mapping the differential forms
in (\ref{itint}) to the corresponding form in the convergent cohomology
group $H^\ell(\Mod_{0,n}^\delta)$ (an explicit expression in terms of
the basis is given in formula (\ref{Kontstoins}) below).  
However, in order to yield an
algebra morphism, this map would have to respect the regularized
double shuffle relations on the multizeta values.  The shuffle relation
is easy to obtain on the images, using the shuffle product maps
corresponding to the partition of $(0,t_1,\ldots,t_\ell,1,\infty)$ into
$(0,t_1,\ldots,t_m,1,\infty)$ and $(0,t_{m+1},\ldots,t_\ell,1,\infty)$
for $2\le m\le \ell-2$ (cf.  \cite{Br2}).  Likewise, one could hope that
the stuffle relations would follow from the so-called stuffle product
maps defined in \cite{Br2}.  These maps can be expressed very simply
in terms of the cubical coordinates $x_1,\ldots,x_\ell$ defined by
$t_1=x_1\cdots x_{\ell}$, $t_2=x_2\cdots x_{\ell},\ldots,t_\ell=x_\ell$,
as
$$(0,x_1,\ldots,x_{\ell},1,\infty)\mapsto (0,x_1,\ldots,x_m,1,\infty)\times
(0,x_{m+1},\ldots,x_{\ell},1,\infty)$$
(it is easy to see that this is indeed a product map \cite{Br2}).  However,
computing the product of two multizeta values as a sum using this product
map yields a sum of cell-zeta values which is not obviously equal to
a sum of multiple zeta values (let alone the desired stuffle sum).

By a method due to P. Cartier, the stuffle relations on multizeta values
written as integrals of the differential forms $\omega$ in (\ref{itint}) 
written in cubical coordinates can be proved using variable
changes of the form
\begin{equation}\label{toomuch}
\int_{[0,1]^\ell} \omega = \int_{[0,1]^\ell} \sigma^*(\omega)
\end{equation}
for $\sigma$ any permutation of the $\ell$ coordinates $x_1,\ldots,x_{\ell}$.
We could choose to forcibly add the relations (\ref{toomuch}), for all
forms $\omega$ such that both $\omega$ and $\sigma^*(\omega)$ are
defined on $\Mod_{0,n}$ and convergent on the standard cell.  This
would ensure the validity of the stuffle relations on multiple zeta values
inside ${\cal FC}$.  However,
we have abstained from doing so in the hopes that some possibly weaker
conditions may be deduced from our relations and imply the stuffle, hence
giving a morphism ${\cal FZ}\rightarrow {\cal FC}$ with the definition of
${\cal FC}$ above.  This
certainly occurs experimentally up to $n=9$.  The paper \cite{Sou} by
I. Soud\`eres takes up this question in the context of motivic multiple
zeta values.

\end{rem}
\begin{rem} By analogy with the situation for mixed Tate motives and
formal multizeta values, we expect that the formal cell-zeta value
algebra will be a Hopf algebra.  However, we have not yet determined
an explicit coproduct.
\end{rem}

\vspace{.5cm}
\section{Polygons and convergence}
\vskip .5cm
The present chapter is devoted to redefining certain familiar geometric notions
from the moduli space situation: differential forms, divisors, convergence
of forms on cells, divergence of forms along divisors, residues, etc., 
in the completely combinatorial setting of polygons.

In this setting, the twin notions of cells and cell-forms are simultaneously
replaced by the single notion of a polygon, as explained in the previous 
chapters.  Boundary divisors then correspond to chords of polygons, and
the issues of divergence become entirely symmetric, with a chord of one 
polygon being ``a bad chord'' for another if the latter corresponds to a 
form which diverges along the divisor represented by the bad chord. 
This language makes it much easier to discuss residue calculations, 
convergence of linear combinations of polygons along bad chords, 
and most importantly, convergence of linear combinations of polygons with 
respect to the standard polygon $\delta$.  In the main result of this chapter,
we exhibit an explicit basis for the space of linear combinations of polygons 
convergent with respect to the standard polygon, consisting
of linear combinations called {\it Lyndon insertion words and 
Lyndon insertion shuffles}.  This
result will be key in the following chapter to determining an explicit
basis for the space of holomorphic differential $\ell$-forms on $\Mod_{0,n}$ 
with logarithmic singularities at the boundary, that converge on the
standard cell $\delta$.  The integrals of these basis elements, baptized 
{\it cell-zeta values}, form the basic generating set of our algebra of 
cell-zeta values, and it is the polygon construction given here that allows
us to define a set of formal cell-zeta values generating the corresponding,
combinatorially defined, formal cell-zeta algebra.
\vspace{.3cm}
\subsection{Bad chords and polygon convergence}
For any finite set $R$ of cardinality $n$, let ${\cal P}_R$ denote the 
$\Q$ vector space of linear combinations of {\it polygons on $R$}, i.e.
cyclic structures on $R$, identified with planar polygons with edges indexed 
by $R$, as in definition \ref{polspace} from section \ref{prodmapsection}.

Let ${\cal V}$ denote the free polynomial shuffle algebra on the alphabet
of positive integers, and let $V$ be the quotient of ${\cal V}$ by the
relations $w=0$ if $w$ is a word in which any letter appears more than once
(these relations imply that $w\sha w'=0$ if $w$ and $w'$ are not disjoint).
A basis for ${\cal V}$ is usually taken to be the set of all words $w$, 
but a theorem of Radford (\cite{Rad} or \cite{Re}, Theorem 6.1 (i)), 
gives an alternative basis for ${\cal V}$ which we use here.

\begin{defn}
Put the lexicographic ordering on the set of all words in a given ordered
alphabet ${\cal A}$.  A {\it Lyndon word} $w$ in the alphabet is a word having 
the following property: for every way of cutting the word $w$ into two 
non-trivial pieces $w_1$ and $w_2$ (so $w$ is the concatenation $w_1w_2$), 
the word $w_2$ is greater than $w$ itself for the lexicographical order.
The {\it Lyndon basis} for the vector space generated by words in ${\cal A}$
is given by Lyndon words and shuffles of Lyndon words.  
\end{defn}

Consider the image of the Lyndon basis of ${\cal V}$ under the quotient map
${\cal V}\rightarrow V$.  The elements of this basis which do not map to zero 
remain linearly independent in $V$, whose basis thus consists of Lyndon
words with distinct letters -- such a word is Lyndon if and only if the
smallest character appears on the left -- and shuffles of disjoint
Lyndon words with distinct letters.  Throughout this chapter, we work
in $V$, so that when we refer to a `word',
we automatically mean a word with distinct letters, and shuffles of such
words are zero unless the words are disjoint.  Let $V_S$ be the
subspace of $V$ spanned by the $n!$ words of length $n$ with distinct letters
in the characters of $S=\{1,\ldots,n\}$.  Then the Lyndon basis for $V_S$
is given by the $(n-1)!$ Lyndon words of degree $n$ and the
$(n-1)\cdot (n-1)!$ shuffles of disjoint Lyndon words
the union of whose letters is equal to $S$.

Recall from definition \ref{polspace} that the vector space ${\cal P}_S$ is 
generated by cyclic structures on $\{1,\ldots,n\}$, identified with
planar $n$-polygons with edges indexed
by $S$.  If we consider $(n+1)$-polygons with edges indexed by $S\cup \{d\}$
for some new letter $d\notin S$, we have a natural isomorphism
\begin{equation}\label{basiciso}
V_S \buildrel\sim\over\rightarrow {\cal P}_{S\cup \{d\}}
\end{equation}
given by writing each cyclic structure on $S\cup \{d\}$ as a word
on the letters of $S$ followed by the letter $d$.

\begin{defn}\label{IS}
Let $I_S\subset {\cal P}_{S\cup\{d\}}$ be the subspace linearly
generated by shuffles of polygons $(A\sha B,d)$, where $A\cup B=S$, 
$A\cap B=\emptyset$ and $A,B\ne \emptyset$.  Here, a shuffle
of polygons simply refers to the linear combination of polygons
indexed by the words in the shuffle sum $(A\sha B,d)$. 
\end{defn}

Then under
the isomorphism (\ref{basiciso}), $I_S$ is identified with the subspace of $V_S$
generated by the part of the Lyndon basis consisting of shuffles.  By a
slight abuse of notation, we use the same notation $I_S$ for the
corresponding subspaces of ${\cal P}_{S\cup\{d\}}$ and of $V_S$.

\vspace{.2cm}
\begin{defn}\label{defchords} Let $D=S_1\cup S_2$ denote a 
stable partition of $S$ (partition
into two disjoint subsets of order $\ge 2$).  Let $\gamma$ be a polygon on
$S$.  We say that the partition $D$ corresponds to a {\it chord of $\gamma$} if
the polygon $\gamma$ admits a chord which cuts $\gamma$ into two pieces
indexed by $S_1$ and $S_2$.  The sets $S_1$, $S_2$ are called {\it blocks}
associated to the chord $D$.  Thus, a chord divides $\gamma$ into two blocks, 
and the set of chords $\chi(\gamma)$ indexes the set of stable partitions which 
are {\it compatible} with $\gamma$ in the sense that the subsets 
$S_1$ and $S_2$ of the partition are blocks of $\gamma$.
\end{defn}

\vspace{.2cm}
\begin{defn} \label{defnconvpoly}
Let $\gamma, \eta$ denote two polygons on $S$. We say
that $\eta$ is \emph{convergent relative to} $\gamma$ if there are no stable
partitions of $S$ compatible with both $\gamma$ and $\eta$:
\begin{equation}\label{convcond}
\chi(\gamma)\cap \chi(\eta) = \emptyset\ .
\end{equation}
In other words, there exists no block of $\gamma$ having the same underlying
set as a block of $\eta$.  If $\eta$ is a polygon on $S$,
then a block of $\eta$ is said to be a {\it consecutive block} if
its underlying set corresponds to a block of the polygon with the
standard cyclic order $\delta$.  The polygon $\eta$ is said to be
{\it convergent} if it has no consecutive blocks at all, i.e., if it
is convergent relative to $\delta$.  A polygon $\eta\in
{\cal P}_{S\cup \{d\}}$ is said to be convergent if it has no
chords partitioning $S\cup \{d\}$ into disjoint subsets $S_1\cup S_2$
such that $S_1$ is a consecutive subset of $S=\{1,\ldots,n\}$.
\end{defn}

\vspace{.2cm}
\begin{defn} \label{defnconvword}  We now adapt the definition of convergence
for polygons in ${\cal P}_{S\cup\{d\}}$ to the corresponding words in $V_S$.
A {\it convergent word} in the alphabet $S$ is a word having no
subword which forms a consecutive block.  In other words, if
$w=a_{i_1}a_{i_2}\cdots a_{i_r}$, then $w$ is convergent if it
has no subword $a_{i_j}a_{i_{j+1}}\cdots a_{i_k}$ such that
the underlying set $\{a_{i_j},a_{i_{j+1}},\ldots,a_{i_k}\}=
\{i,i+1,\ldots,i+r\}\subset \{1,\ldots,n\}$.  A convergent word
is in fact the image in $V_S$ of a convergent polygon in
${\cal P}_{S\cup \{d\}}$ under the isomorphism (\ref{basiciso}).
\end{defn}

\begin{example}
When $1\leq n\leq 4$ there are no convergent polygons in ${\cal P}_S$. For
$n=5$, there is only one convergent polygon up to sign, given by
$\gamma=(13524)$.  The other convergent cyclic structure $(14253)$ is just
the cyclic structure $(13524)$ written backwards.
When $n=6$, there are three convergent polygons up to sign:
$$(135264) \ , \quad (152463) \ , \quad (142635) \ .$$
There are 23 convergent polygons for $n=7$. Note that when $n=8$, the
dihedral structure $\eta=(24136857)$ is not convergent even though
no neighbouring numbers are adjacent, because
$\{1,2,3,4\}$ forms a consecutive block for both $\eta$ and $\delta$.
\end{example}

\begin{rem}
The enumeration of permutations satisfying the single condition
that no two adjacent elements in $\gamma$ should be consecutive (the
case $k=2$) is known as the dinner table problem and is a classic
problem in enumerative combinatorics. The more general problem of
convergent words (arbitrary $k$)
seems not to have been studied previously. The
problems coincide for $n\leq 7$, but the counterexample for $n=8$
above shows that the problems are not equivalent for $n\geq 8$.
\end{rem}

\vskip .5cm
\subsection{Residues of polygons along chords}  In this section, we give
a combinatorial definition on polygons generalizing the notion of the
residue of a differential form at a boundary divisor along which it
diverges.

\vspace{.3cm}
\begin{defn}\label{residues} ({\it Polygon residues})
For every stable partition $D$ of $S$ given by $S=S_1\cup S_2$,
we define a residue map on polygons
$$\Res^p_D:{\cal P}_S \To {\cal P}_{S_1\cup\{d\}} \otimes_\Q {\cal
  P}_{S_2\cup\{d\}}$$
as follows.  Let $\eta$ be a polygon in ${\cal P}_S$.  If the partition
$D$ corresponds to a chord of $\eta$, then it cuts $\eta$ into two subpolygons
$\eta_i$ ($i=1,2$) whose edges are indexed by the set $S_i$ and an edge
labelled $d$ corresponding to the chord $D$.  We set
\begin{equation}
\Res^p_D(\eta)=
\begin{cases}
\eta_1\otimes \eta_2&\hbox{if $D$ is a chord of
  $\eta$}\\
0&\hbox{if $D$ is not a chord of $\eta$}.
\end{cases}
\end{equation}

More generally, we can define the residue for several disjoint chords
simultaneously.
Let $S=S_1\cup \cdots \cup S_{r+1}$ be a partition of
$S$ into $r+1$ disjoint subsets with $r\ge 2$.  For $1\le i\le r$, let
$D_i$ be the partition of $S$ into the two subsets
$(S_1\cup \cdots S_i)\cup (S_{i+1}\cup
\cdots \cup S_{r+1})$.  For any polygon
$\eta\in {\cal P}_S$, we say that $\eta$ admits the chords $D_1,\ldots,D_r$
if there exist $r$ chords of $\eta$, disjoint except possibly for endpoints,
partitioning the edges of $\eta$
into the sets $S_1,\ldots,S_{r+1}$.  If $\eta$ admits the chords
$D_1,\ldots,D_r$, then these chords cut
$\eta$ into $r+1$ subpolygons $\eta_1,\ldots,\eta_{r+1}$.  Let
$T_i$ denote the set indexing the edges of $\eta_i$, so that each
$T_i$ is a union of $S_i$ and elements of the set
$\{d_1,\ldots,d_r\}$ of indices of the chords.  The {\it composed residue map}
$$\Res^p_{D_1,\ldots,D_r}:{\cal P}_S\rightarrow
{\cal P}_{T_1}\otimes \cdots \otimes {\cal P}_{T_r}$$
is defined as follows:
\begin{equation}
\Res^p_{D_1,\ldots,D_r}(\eta)=
\begin{cases}
\eta_1\otimes \cdots \otimes\eta_{r+1}&\mbox{if }\eta\mbox{ admits
}D_1,\ldots,D_r\mbox{ as  disjoint chords}\\
0&\mbox{if }\eta\mbox{ does not admit }D_1,\ldots,D_r
\end{cases}
\end{equation}
\end{defn}
\begin{ex}
In this example,  $n=12$ and the partition of $S$ given by $D_1$, $D_2$, $D_3$
and $D_4$ is $S_1=\{1,2,3\}$,
$S_2=\{4,10,11,12\}$, $S_3=\{5,9\}$, $S_4=\{6\}$, $S_5=\{7,8\}$.
\vskip .3cm
\epsfxsize=12cm
\centerline{\epsfbox{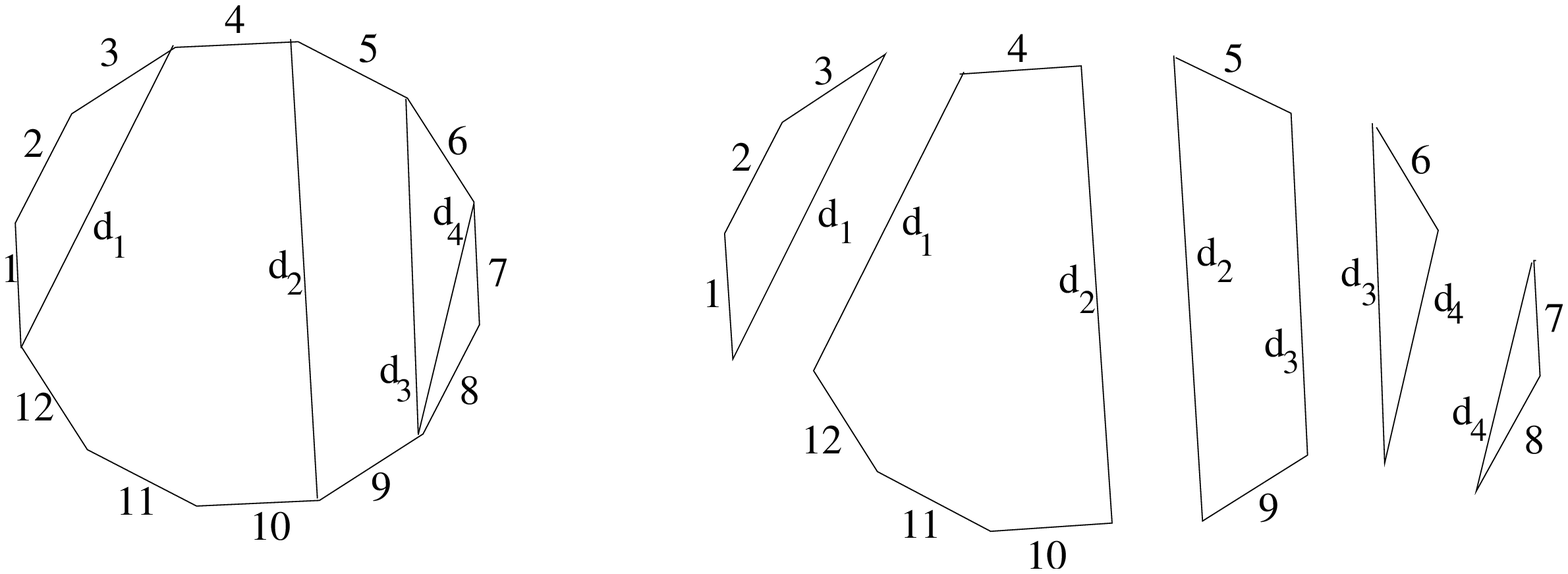}}
\vskip .3cm
We have $T_1=S_1\cup\{d_1\}$, $T_2=S_2\cup \{d_1,d_2\}$,
$T_3=S_3\cup \{d_2,d_3\}$, $T_4=S_4\cup \{d_3,d_4\}$, $T_5=S_5\cup \{d_4\}$.
The composed residue map $\Res^p_{D_1,D_2,D_3,D_4}$ maps the standard
polygon $\delta=(1,2,3,4,5,6,7,8,9,10,11,12)$ to the tensor product of the
five subpolygons shown in the figure.
\end{ex}

The definition of the residue allows us to extend the definition of
convergence of a polygon to linear combinations of polygons.
\vspace{.2cm}
\begin{defn} ({\it Polygon divergence along the standard polygon: bad chords}) Let $E$ be a partition of $S\cup \{d\}$ into two subsets,
one of which is a consecutive subset $T=\{i,i+1,\ldots,i+j\}$ of 
$S$ for the standard order, and let $\eta$ be a polygon.  We say that
$E$ is a {\it bad chord} for $\eta$, or eqiuvalently $\eta$ is a 
{\it bad polygon} for $E$, if $E\in \chi(\eta)$
(this expresses the idea that the cell-form corresponding to $\eta$ 
diverges along the boundary divisor, corresponding to $E$, of the
standard cell $\delta$).  If $\eta =\sum_i a_i\eta_i$, then we say that $E$
is a bad chord for $\eta$ if any $\eta_i$ is a bad polygon for $E$.
\end{defn}
\begin{defn}\label{convergence} ({\it Polygon convergence along the standard polygon}) 
The linear combination $\eta=\sum_i a_i\eta_i$ is said to {\it converge} 
along the chord $E$ of the standard polygon (or along the corresponding
consecutive subset $T$) if the residue satisfies
\begin{equation}\label{conv}
\Res^p_E(\eta)\in I_T\otimes {\cal P}_{S\setminus T\cup \{d\}\cup\{e\}},
\end{equation}
where $I_T$ is as in definition \ref{IS}.
A linear combination $\eta$ is {\it convergent} (along the standard
polygon) if it converges along all of its bad chords.
\end{defn}

\vspace{.2cm}
The goal of the following section is to define a set of particular
linear combinations of polygons, the Lyndon insertion words and 
Lyndon insertion shuffles,
which are convergent, and show that they are linearly independent.  In
the section after that, we will prove that this set forms a basis for the
convergent subspace of ${\cal P}_{S\cup \{d\}}$.
\vskip .5cm
\subsection{The Lyndon insertion subspace}\label{Lyndins}

\vspace{.3cm} 
\begin{defn}\label{WS}
Let a $1n$-word be a word of length $n$ in the distinct
letters of $S=\{1,\ldots,n\}$ in which the letter $1$ appears just to the
left of the letter $n$, and let $W_S\subset V_S\simeq {\cal P}_{S\cup\{d\}}$
denote the subspace generated by these words.   The space $W_S$ is of dimension
$(n-1)!$.  
\end{defn}

The following lemma will show that $V_S=W_S\oplus I_S$, where
$I_S$ is the subspace of shuffles of definition \ref{IS}.

\begin{lem}\label{usefullemma}
Fix two elements $a_1$ and $a_2$ of $S=\{1,\ldots,n\}$.  

\vspace{.2cm}
Let
\begin{equation*}
\tau=\sum_i c_i\eta_i\in V_S,
\end{equation*}
where the $\eta_i$ run over the words of length $n$ in $V_S$ such that
$a_1$ is the leftmost character of $\eta_i$ (resp. the $\eta_i$ run over the
words where $a_1$ appears just to the left of $a_2$ in $\eta_i$). Then 
$\tau \in I_S$ if and only if $c_i=0$ for all $i$. 
\end{lem}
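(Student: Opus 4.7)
The ``if'' direction is immediate (the zero element lies in $I_S$, and a sum with all coefficients zero is zero). For the ``only if'' direction I set $W_1 \subset V_S$ for the span of words whose leftmost character is $a_1$, and $W_2 \subset V_S$ for the span of words in which $a_1$ appears just to the left of $a_2$, and I aim to prove $W_1 \cap I_S = W_2 \cap I_S = 0$. Both $W_i$ have dimension $(n-1)!$, and the discussion of the Lyndon basis immediately preceding the lemma identifies $I_S$ with the span of the shuffle part of the Lyndon basis of $V_S$, so $V_S/I_S$ has dimension $(n-1)!$ as well, spanned by the images of the Lyndon words of length $n$ on $\{1,\ldots,n\}$. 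It therefore suffices to show that each composition $W_i \hookrightarrow V_S \twoheadrightarrow V_S/I_S$ is surjective; injectivity, and hence $W_i \cap I_S = 0$, will follow automatically by equality of dimensions.

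For case (i) I plan to show by induction on the position $k$ of $a_1$ inside a word $w \in V_S$ that $w$ is congruent modulo $I_S$ to a combination of words beginning with $a_1$. If $k \geq 2$, write $w = u a_1 v$ with $|u| = k-1 \geq 1$; then $u \sha (a_1 v) \in I_S$, and its expansion contains $w$ exactly once (the unique term in which all of $u$ precedes $a_1$), while every other interleaving places at least one letter of $u$ after $a_1$, putting $a_1$ at a position strictly less than $k$. The induction on $k$ then closes the case.

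For case (ii) I plan to introduce the scalar measure
\[
\phi(w) = \begin{cases} j - i - 1 & \text{if $a_1$ is at position $i$ and $a_2$ at position $j > i$,} \\ n + i - j & \text{if $a_2$ is at position $j$ and $a_1$ at position $i > j$,} \end{cases}
\]
chosen so that $\phi(w) = 0$ exactly characterises $W_2$ and so that any word with $a_2$ before $a_1$ has $\phi \geq n + 1$, strictly larger than any value of $\phi$ attained by a word with $a_1$ before $a_2$. If $\phi(w) > 0$ and $a_1$ still precedes $a_2$, write $w = u a_1 v a_2 v'$ with $|v| = \phi(w) \geq 1$; the shuffle $v \sha (u a_1 a_2 v') \in I_S$ contains $w$ as the unique term that inserts all of $v$ between $a_1$ and $a_2$, while every other interleaving keeps $a_1$ before $a_2$ but puts strictly fewer letters between them, decreasing $\phi$. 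If instead $a_2$ precedes $a_1$, write $w = u a_2 v a_1 v'$; the shuffle $(u a_2 v) \sha (a_1 v') \in I_S$ contains $w$ as the concatenation term, while every other interleaving pushes $a_1$ to a strictly earlier position, either remaining in the $a_2$-before-$a_1$ regime with smaller $i - j$ or flipping the order of $a_1$ and $a_2$ and thereby dropping $\phi$ from at least $n+1$ down to at most $n - 2$. Induction on $\phi$ then rewrites every $w$ as a combination of elements of $W_2$.

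The genuine difficulty is case (ii): the measure $\phi$ has to be tuned so that it strictly decreases in every secondary term of both shuffle identities and so that the regime change from $a_2$-before-$a_1$ to $a_1$-before-$a_2$ is automatically a decrease. Once $\phi$ is in place, the rest amounts to a direct expansion of shuffle products combined with the Radford-type dimension count already established in the paper.
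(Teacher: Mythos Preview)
Your argument is correct, but it takes a genuinely different route from the paper's own proof.

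The paper does not argue combinatorially inside the shuffle algebra at all. Instead, it passes through the map $\rho$ from polygons to cell-forms: one chooses a bijection $\{1,\ldots,n,d\}\to\{0,1,\infty,t_1,\ldots,t_{n-2}\}$ sending $d\mapsto 0$, $a_1\mapsto 1$ (resp.\ $a_1\mapsto 0$, $a_2\mapsto 1$), so that every $\eta_i$ becomes a $01$ cell-form. If $\tau\in I_S$, then $\tau$ is a sum of one-point shuffles, hence maps to zero by the identity $(\ref{cellfunction1shuffsare0})$; but the $01$ cell-forms are linearly independent by Theorem~\ref{thm01cellsspan}, forcing all $c_i=0$. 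This is a two-line application of machinery already in place.

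Your approach, by contrast, is purely combinatorial and self-contained: you establish $W_i\cap I_S=0$ by a dimension count plus an explicit shuffle-rewriting induction showing $W_i$ surjects onto $V_S/I_S$. The induction on the position of $a_1$ in case~(i) is standard; the measure $\phi$ in case~(ii) is well-chosen, and the verification that $\phi$ strictly drops in every non-leading shuffle term goes through (the key point being that $a_1$, as the first letter of its shuffle factor, can only move left). What your approach buys is independence from the cohomological input of Theorem~\ref{thm01cellsspan}; what the paper's approach buys is brevity and a direct link to the geometric picture that motivates the whole set-up.
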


\begin{proof} The assumption $\tau\in I_S$ means that we can write $\tau=
\sum_i c_i u_i\sha v_i$ for non-empty words $u_i$ and $v_i$. Considering  
this in the space ${\cal P}_{S\cup\{d\}}$
isomorphic to $V_S$, it is a sum of cyclic structures $\sum_i c_i(u_i,d)
\sha (v_i,d)$ shuffled with respect to the point $d$.  Choose any bijection 
$\rho:\{1,\ldots,n,d\}\rightarrow 
\{0,1,\infty,t_1,\ldots,t_{n-2}\}$ which maps $d$ to $0$ and $a_1$ to $1$
(resp. which maps $a_1$ to $0$ and $a_2$ to $1$).  Define a linear map from
${\cal P}_{S\cup \{d\}}$ to $H^{n-2}(\Mod_{0,n+1})$ by first renumbering
the indices $(1,\ldots,n,d)$ of each polygon $\eta\in {\cal P}_{S\cup \{d\}}$
as $(0,1,\infty,t_1,\ldots,t_{n-2})$ via $\rho$, then mapping the
renumbered polygon to the corresponding cell-form (same cyclic order).
By hypothesis,
$\tau=\sum_i c_i \eta_i$ maps to a sum $\omega_\tau=\sum_i c_i\omega_{\eta_i}$
of $01$ cell forms.  Since $\tau$ is a shuffle with respect to one point,
we know by (\ref{cellfunction1shuffsare0}) that $\omega_\tau=0$.
But the $01$ cell-forms $\omega_{\eta_i}$ are linearly
independent by theorem \ref{thm01cellsspan}.  Therefore each $c_i=0$.
\end{proof}

Recall that the shuffles of disjoint Lyndon words form a basis for $I_S$;
we call them {\it Lyndon shuffles}.  A {\it convergent Lyndon shuffle} is a
shuffle of convergent Lyndon words.

\begin{defn}\label{defngrr}
We will recursively define the set $\cal{L}_S$ of {\it Lyndon insertion
shuffles} in $I_S$.  If $S=\{1\}$, then ${\cal L}_S=\emptyset$.
If $S=\{1,2\}$ then ${\cal L}_S=
\{1\sha  2\}$. In general, if $D$ is any (lexicographically ordered)
alphabet on $m$ letters and $S=\{1,\ldots,m\}$, we define
${\cal L}_D$ to be the image of ${\cal L}_S$
under the order-preserving
bijection $S\rightarrow D$ corresponding to the ordering of $D$.

Assume now that $S=\{1,\ldots,n\}$ with $n>2$, and that we have
constructed all of the sets
${\cal L}_{\{1,\ldots, i\}}$ with $i<n$.  Let us construct
${\cal L}_S$.  The elements of these sets are constructed
by taking convergent Lyndon shuffles on a smaller alphabet, and
making ``insertions'' into every letter except for the
leftmost letter of each Lyndon word in the shuffle,
according to the following explicit
procedure.  Let $T=\{a_1,\ldots,a_k\}$ be an alphabet with $3\le k\le n$
letters, ordered by the lexicographical ordering $a_1<\cdots <a_k$,
and choose a convergent Lyndon shuffle $\gamma$ of length $k$ in the
letters of $T$.  Write $\gamma$ as a shuffle of $s>1$ convergent Lyndon words
in disjoint letters:
$$\gamma=(a_{i_1}\cdots a_{i_{k_1}})\sha  (a_{i_{k_1+1}}\cdots a_{i_{k_2}})
\sha \cdots\sha  (a_{i_{k_{s-1}+1}}\cdots a_{i_{k_s}})$$
where $1\le k_1<k_2<\cdots<k_s=k$.  Choose integers
$v_1,\ldots,v_k\ge 1$ such that $\sum_i v_i=n$ and such that for each of the
indices $l=i_1,i_{k_1+1},\ldots,i_{k_{s-1}+1}$ of the leftmost characters of the
$s$ convergent Lyndon words in $\gamma$, we have $v_l=1$.
For $1\le i\le k$, let $D_i$ denote an alphabet $\{b^i_1,\ldots,b^i_{v_i}\}$.
When $v_i=1$, insert $b^i_1$ into the place of the letter $a_i$ in
$\gamma$; when $v_i>1$, choose any element $V_i$ from ${\cal L}_{D_i}$,
and insert this $V_i$ into the place of the letter $a_i$.

The result is a sum of words in the alphabet
$\cup_{i=1}^k D_i$.  Note that this alphabet is of cardinal $n$ and equipped
with a natural lexicographical ordering given by the ordering
$D_1,\ldots,D_k$ and the orderings within each alphabet $D_i$.  We
can therefore renumber this alphabet as $1,\ldots,n$.
Since it is a sum of shuffles, the renumbered
element lies in $I_S$, and we call it a {\it Lyndon insertion shuffle} on $S$.
The original convergent Lyndon shuffle $\gamma$ on $T$ is called the framing;
together with the integers $v_i$, we call this the fixed structure
of the insertion shuffle.
We define ${\cal L}_S$ to be the set of all Lyndon insertion shuffles on $S$,
constructed by varying the choice of $3\le k\le n$, the convergent Lyndon 
shuffle $\gamma$ on $k$ letters, the numbers $v_1,\ldots,v_k$ and the
elements $V_i$ for each $v_i>1$ in every possible way.

In the special case where $k=n$, we have $v_i=1$ for $1\le i\le k$ and there 
are no non-trivial insertions.  The corresponding elements of ${\cal L}_S$ are
thus just convergent Lyndon shuffles.
\end{defn}

\begin{example}\label{examplegrr}
We have
$${\cal L}_{\{1,2\}}=\{1\sha  2\}$$
$${\cal L}_{\{1,2,3\}}=\{1\sha  2\sha  3, \ 2\sha  13\}$$
$${\cal L}_{\{1,2,3,4\}}=\{1\sha  2\sha  3\sha  4,\ 13\sha  2\sha  4,\
14\sha  2\sha  3,\ 24\sha  1\sha  3, $$
$$\qquad \qquad \qquad 3\sha  142,\ 13\sha  24,\ 1(3\sha  4)\sha  2\}$$
The last element of ${\cal L}_{\{1,2,3,4\}}$ is obtained by taking
$T=\{1,2,3\}$ and $\gamma=13\sha  2$.  We can only insert in the
place of the character 3 since 1 and 2 are leftmost letters of the
Lyndon words in $13\sha  2$.  As for what can be inserted in the
place of 3, the only possible choices are $k=1$, $v_1=2$,
$D_1=\{b_1,b_2\}$, and $V_1=b_1\sha  b_2$, the unique element of
${\cal L}_{D_1}$.  The natural ordering on the alphabet
$\{T\setminus 3\}\cup D_1$ is given by $(1,2,b_1,b_2)$ since
$b_1\sha  b_2$ is inserted in the place of 3, so we renumber $b_1$
as 3 and $b_2$ as 4, obtaining the new element $1(3\sha  4)\sha  2=
134\sha 2+143\sha 2=2134+1234+1324+1342+2143+1243+1423+1432$.

For $n=5$, ${\cal L}_{\{1,2,3,4,5\}}$ has 34 elements.  Of these, 25
are convergent Lyndon shuffles which we do not list.  The remaining
nine elements are obtained by insertions into the smaller convergent
Lyndon shuffles: they are given by
$$\begin{cases}
2\sha  1(4\sha  35),\ 2\sha  1(3\sha  4\sha  5)&\mbox{insertions into }2\sha  13\\
3\sha  1(4\sha  5)2,\ 4\sha  15(2\sha  3)&\mbox{insertions into }3\sha  142\\
13\sha  2(4\sha  5),\ 1(3\sha  4)\sha  25&\mbox{insertions into }13\sha  24\\
1(3\sha  4)\sha  2\sha  5&\mbox{insertion into }13\sha  2\sha  4\\
1(4\sha  5)\sha  2\sha  3&\mbox{insertion into }14\sha  2\sha  3\\
2(4\sha  5)\sha  1\sha  3&\mbox{insertion into }24\sha  1\sha  3.
\end{cases}$$

\end{example}

\begin{defn}\label{insw}
We now define a complementary set, the set ${\cal W}_S$ of
{\it Lyndon insertion words}.  Let a {\it special convergent word} $w\in V_S$
denote a convergent word of length $n$ in $S$ such that in the
lexicographical ordering $(1,\ldots,n,d)$, the polygon (cyclic structure)
$\eta=(w,d)$ satisfies $\chi(\delta)\cap \chi(\eta)=\emptyset$; in
other words, the polygon $\eta$ has no chords in common with the standard
polygon.  This condition is a little stronger than asking $w$
to be a convergent word (for instance, $13524$ is a convergent word but
not a special convergent word, since $13524d$ has a bad chord $\{2,3,4,5\}$).
The first elements of ${\cal W}_S$ are given by the special convergent
$1n$-words.  The remaining elements of ${\cal W}_S$ are the
{\it Lyndon insertion words} constructed as follows.  Take a special convergent
word $w'$ in a smaller alphabet $T=\{a_1,\ldots,a_k\}$ with $k<n$ such that
$a_1$ appears just to the left of $a_{k-1}$, and choose positive integers
$v_1,\ldots,v_k$ such that $v_1=v_k=1$ and $\sum_i v_i=n$.  As above, we let
$D_i=\{b^i_1,\ldots,b^i_{v_i}\}$ for $1\le i\le k$, and choose
an element $D_i$ of ${\cal L}_{D_i}$ for each $i$ such that $v_i>1$.  For $i$
such that $v_i=1$, insert $b^i_1$ in the place of $a_i$ in $w'$, and for $i$
such that $v_i>1$ insert $D_i$ in the place of $a_i$.  We obtain a sum of words
$w''$ in the letters $\cup D_i$.  This alphabet has a natural lexicographic
ordering $D_1,\ldots,D_k$ as above, so we can renumber its letters from $1$ to
$n$, which transforms $w''$ into a sum of words $w\in V_S$ called a
{\it Lyndon insertion word}.  Note that by construction, the result is still
a sum of $1n$-words.  The set ${\cal W}_S$ consists of the special convergent
words and the Lyndon insertion words.
\end{defn}

\begin{rem}It follows from lemma \ref{usefullemma} that the intersection
of the subspace $\langle {\cal W}_S\rangle$ in $V_S$ with the subspace
$I_S$ of shuffles is equal to zero.
\end{rem}

\begin{example} We have
$${\cal W}_{\{1,2\}}=\emptyset, \ \
{\cal W}_{\{1,2,3\}}=\emptyset,\ \
{\cal W}_{\{1,2,3,4\}}=\{3142\},$$
$${\cal W}_{\{1,2,3,4,5\}}=\{24153,31524,(3\sha 4)152,415(2\sha 3)\}$$
The last two elements of ${\cal W}_{\{1,2,3,4,5\}}$ are obtained by
taking $v_1=1,v_2=1,v_3=2,v_4=1$ and $v_1=1,v_2=2,v_3=1,v_4=1$ and
creating the corresponding Lyndon insertion word with respect to $3142$.
\end{example}

\vspace{.2cm}
\begin{thm}\label{linind}
The set ${\cal W}_S\cup {\cal L}_S$ of Lyndon insertion words and shuffles
is linearly independent.
\end{thm}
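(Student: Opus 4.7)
The approach is first to split the two sets and then to prove linear independence of each by induction on $n=|S|$. For the separation, observe that every Lyndon insertion shuffle in ${\cal L}_S$ is by construction built from shuffle products, hence belongs to $I_S$, while every Lyndon insertion word in ${\cal W}_S$ is a sum of $1n$-words; the remark following definition \ref{insw} (an instance of lemma \ref{usefullemma} applied with $a_1=1$ and $a_2=n$) then gives $\langle{\cal W}_S\rangle\cap I_S=0$. Consequently $\langle{\cal W}_S\rangle\cap\langle{\cal L}_S\rangle=0$, and it suffices to prove that ${\cal W}_S$ and ${\cal L}_S$ are each linearly independent.

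I would then proceed by induction on $n$, with the small cases $n\le 4$ checked directly from example \ref{examplegrr}. The main device is to attach to each $\Lambda$ in ${\cal L}_S$ or ${\cal W}_S$ a distinguished \emph{leading word} $w_\Lambda$, defined recursively: for a convergent Lyndon shuffle $L_1\sha\cdots\sha L_s$ with Lyndon factors listed in a canonical (say lexicographic) order, set $w_\Lambda$ to be the concatenation $L_1L_2\cdots L_s$; for a special convergent word, set $w_\Lambda=\Lambda$; and for an element built from a framing $\gamma$ with insertions $V_i\in{\cal L}_{D_i}$ at the positions with $v_i>1$, substitute into the leading word of $\gamma$ the inductively defined word $w_{V_i}$ in place of each letter $a_i$ (using the unique letter of $D_i$ when $v_i=1$), and then relabel. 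Using the multilinearity of the shuffle product together with the inductive fact that $w_{V_i}$ appears in $V_i$ with coefficient $\pm 1$, one checks that $w_\Lambda$ appears in the word-expansion of $\Lambda$ with coefficient $\pm 1$ as well.

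Linear independence then reduces to showing that the map $\Lambda\mapsto w_\Lambda$ is injective on each of ${\cal W}_S$ and ${\cal L}_S$. Given $w_\Lambda$, one recovers the fixed structure of $\Lambda$ in stages. First, locate the maximal contiguous subwords of $w_\Lambda$ whose letter-set is a consecutive subset of $\{1,\dots,n\}$: since the framing $\gamma$ is by assumption a convergent (Lyndon shuffle or special convergent) word, no such maximal consecutive block can straddle letters belonging to different insertion alphabets, so each one lies entirely within a single $D_i$. Collapsing each block to a single letter yields the leading word of the framing, from which $\gamma$ is reconstructed via its unique Lyndon factorization, and the positions of the collapsed blocks read off the tuple $(v_1,\dots,v_k)$; restricting $w_\Lambda$ to each block $D_i$ gives $w_{V_i}$, from which $V_i$ is recovered by the inductive hypothesis.

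The main obstacle is precisely this detection step: proving rigorously that the maximal contiguous consecutive-integer subwords of $w_\Lambda$ are exactly the insertion blocks $D_i$. This requires showing that the canonical ordering on Lyndon factors can be chosen so that substitution preserves the ``no extraneous consecutive subwords outside the insertions'' property, and that the leading word of a convergent Lyndon shuffle contains no consecutive-set subword of length $\ge 2$ other than those produced by insertions. This is the only point at which the convergence hypothesis on the framing enters nontrivially, and it is the heart of the argument; once it is in place, the recursion closes and both halves of the theorem follow at once.
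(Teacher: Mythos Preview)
Your separation of ${\cal W}_S$ from ${\cal L}_S$ via lemma \ref{usefullemma} and the overall induction on $n$ are exactly what the paper does.  After that point, however, your strategy diverges from the paper's and contains a genuine gap.

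The central problem is the sentence ``Linear independence then reduces to showing that the map $\Lambda\mapsto w_\Lambda$ is injective.''  This reduction is not valid as stated.  Knowing that each $\Lambda$ contains a distinguished word $w_\Lambda$ with coefficient $\pm 1$, and that distinct $\Lambda$'s give distinct $w_\Lambda$'s, does not by itself force linear independence: nothing prevents $w_\Lambda$ from also occurring in other $\Lambda'$.  What you actually need is a triangularity statement---for instance that $w_\Lambda$ is the lexicographically maximal word in the support of $\Lambda$---and you neither state nor prove this.  For pure Lyndon shuffles the triangularity is classical (the decreasing concatenation of the Lyndon factors is the lex-max term), but whether substitution of leading words into leading words preserves maximality is precisely the kind of compatibility you would have to check, and it interacts with the detection step you already flag as unresolved.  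As it stands, both gaps are real.

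The paper sidesteps these difficulties by never working with individual words at all.  Instead it assigns a \emph{fixed structure} $(\gamma,v_1,\dots,v_k)$ to every Lyndon \emph{shuffle} (not every word): one collapses the maximal consecutive blocks inside each Lyndon factor, which is unambiguous because the Lyndon factorization is unique.  This gives an honest direct-sum decomposition $V_S=\bigoplus L(\gamma,v_1,\dots,v_k)$, and each Lyndon insertion shuffle lands in a single summand by construction.  Linear independence across summands is then free, and within a summand the paper applies the composed residue map $\Res^p_{D_1,\dots,D_m}$ along the chords corresponding to the blocks $B_{i_j}$ with $v_{i_j}>1$: this map sends each insertion shuffle to a tensor product of its insertions (which lie in the smaller ${\cal L}_{B_{i_j}}$) times the fixed framing $\gamma'$, and the inductive hypothesis finishes the argument.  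The residue map is the device that replaces both your leading-word bookkeeping and your detection step, and it does so cleanly because it is linear and extracts exactly the inserted pieces.
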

\begin{proof}
We will prove the result by induction on $n$.  Since ${\cal L}_S\subset
I_S$ and we saw by lemma \ref{usefullemma} that the space generated by
${\cal W}_S$ has zero intersection with $I_S$, we only have to show that
that both ${\cal W}_S$ and ${\cal L}_S$ are linearly independent sets.
We begin with ${\cal L}_S$.  Since
${\cal L}_{\{1,2\}}$ contains a single element, we may assume that $n>2$.

Let $W=A_1\sha \cdots\sha  A_r$ be a Lyndon shuffle, with $r>1$.
We define its {\it fixed
structure} as follows.  Replace every maximal consecutive block (not
contained in any larger consecutive block) in each $A_i$ by a single
letter.  Then $W$ becomes becomes a convergent Lyndon shuffle $W'$ in a
smaller alphabet $T'$ on $k$ letters, which is equipped with an inherited
lexicographical ordering.  If $T=\{1,\ldots,k\}$, then under the
order-respecting bijection $T'\rightarrow T$, $W'$ is mapped to a convergent
Lyndon shuffle $V$ in $T$, called the framing of $W$.
The fixed structure is given by the framing together with
the set of integers $\{v_i\mid 1\le i\le k\}$ defined by $v_i=1$ if that letter
in $T$ does not correspond to a maximal block, and $v_i$ is the length of the
maximal block if it does.  Thus we have $v_1+\cdots+v_k=n$.  We can extend
this definition to the fixed structure of a Lyndon insertion
shuffle, since by definition this is a linear combination of Lyndon
shuffles all having the same fixed structure, and we recover the
framing and fixed structure of the insertion shuffle given in the definition.

\begin{example} If $W$ is the Lyndon shuffle $1546\sha 237$, we
replace the consecutive blocks $23$ and $546$ by letters $b_1$ and $b_2$,
obtaining the convergent shuffle $W'=1b_2\sha b_17$ in the alphabet
$T'=\{1,b_1,b_2,7\}$; renumbering this as $1,2,3,4$ we obtain $V=13\sha 24
\in {\cal L}_{\{1,2,3,4\}}$.  The fixed structure is given by $13\sha 24$ and
integers $v_1=1,v_2=2,v_3=3, v_4=1$.

The Lyndon insertion shuffles $(1, (3 \sha  4 )) \sha  (2, 5 )$ and $(1,3) \sha
(2, (4 \sha  5))$ have the same framing
$13\sha  24$, but since
$(v_1,v_2,v_3,v_4)=(1,1,2,1)$ for the first one and $(1,1,1,2)$ for the
second, they do not have the same fixed structure.  The Lyndon insertion
shuffles $(1, (5) \sha  (3, 4, 6)) \sha  (2, 7) $ and $
(1, (3,5) \sha  (4,6)) \sha  (2,7)$ have the same associated framing
$13\sha  24$ and the same integers $(v_1,v_2,v_3,v_4)=(1,1,4,1)$.
so they have the same fixed structure.
\end{example}
\vskip .2cm
For any fixed structure, given by a convergent Lyndon shuffle $\gamma$
on an alphabet $T$ of length $k$ and associated integers $v_1,\ldots,v_k$
with $v_1+\cdots+v_k=n$, let
$L(\gamma, v_1, ..., v_k)$ be the subspace of
$V_S$ spanned by Lyndon shuffles with that fixed structure.  Since
Lyndon shuffles are linearly independent, we have
\begin{align*}
V_{S}=\bigoplus L(\gamma, v_1, ..., v_k)
\end{align*}
Now, as we saw above, a Lyndon insertion shuffle is a linear combination
of Lyndon shuffles all having the same fixed structure, so every
element of ${\cal W}_S\cup{\cal L}_S$ lies in exactly one subspace
$L(\gamma,v_1,\ldots,v_k)$.
Thus, to prove that the elements of ${\cal L}_S$ are
linearly independent, it is only necessary to prove the linear
independence of Lyndon insertion shuffles with the same fixed structure.
If all of the $v_i=1$, then the fixed structure is just a single
convergent Lyndon shuffle on $S$, and these are linearly independent.
So let $(\gamma,v_1,\ldots,v_k)$ be a fixed structure with not all of the
$v_i$ equal to $1$, and let $\omega=\sum_q c_q \omega_q$ be a linear
combination of Lyndon insertion shuffles of fixed structure
$\gamma,v_1,\ldots,v_k$.

Break up the tuple $(1,\ldots,n)$ into $k$ successive tuples
$$B_1=(1,\ldots,v_1),\ B_2=(v_1+1,\ldots,v_1+v_2),\ldots,
B_k=(v_1+\cdots+v_{k-1}+1,\ldots, n).$$

Let $i_1,\ldots,i_m$ be the indices such that $B_{i_1},\ldots,B_{i_m}$ are
the tuples of length greater than $1$.  These tuples correspond to the
insertions in the Lyndon insertion shuffles of type $(\gamma,v_1,\ldots,
v_k)$.  For $1\le j\le m$, let $T_j=\{B_{i_j}\}\cup \{d_j\}$.  This element
$d_j$ is the index of the chord $D_j$ corresponding to the consecutive subset
$B_{i_j}$, which is a chord of the standard polygon and also of every term of
$\omega$.  The chords $D_1,\ldots,D_r$ are disjoint and cut each term of
$\omega$ into $m+1$ subpolygons, $m$ of which are indexed by $T_j$, and the
last one of which is indexed by $T'=
S\setminus \{B_{i_1}\cup \cdots\cup B_{i_m}\}\cup \{d_1,\ldots,
d_m\}$.  Thus we can take the composed residue map
$$\Res^p_{D_1,\ldots,D_m}(\omega)\in {\cal P}_{T_1}\otimes
\cdots \otimes {\cal P}_{T_m}\otimes {\cal P}_{T'}.$$
Let us compute this residue.

The alphabet $T'$ is of length $k$ and has a natural ordering corresponding
to a bijection $\{1,\ldots,k\}\rightarrow T'$.  Let $\gamma'$ be the
image of $\gamma$ under this bijection, i.e. the framing.
Let $P^q_1,\ldots,P^q_m$ be the insertions corresponding to the $m$ tuples
$B_{i_1},\ldots,B_{i_m}$ in each term of $\omega=\sum_qc_q\omega_q$. Each
$P^q_j$ lies in ${\cal L}_{B_{i_j}}$.
The image of the composed residue map is then
\begin{equation}\label{compres}
\Res^p_{D_1,\ldots,D_m}(\omega)=\sum_q c_q (P^q_1,d_1)\otimes \cdots\otimes
(P^q_m,d_m)\otimes \gamma'.
\end{equation}
Now assume that $\omega=\sum_q c_q\omega_q=0$, and let us show that each
$c_q=0$. We have
$$\sum_q c_q (P^q_1,d_1)\otimes \cdots\otimes (P^q_m,d_m)\otimes \gamma'=0,$$
and since $\gamma'$ is fixed, we have
$$\sum_q c_q (P^q_1,d_1)\otimes \cdots\otimes (P^q_m,d_m)=0.$$
But for $1\le j\le m$, the $P^q_j$ lie in ${\cal L}_{B_{i_j}}$ and
thus, by the induction hypothesis, the distinct $P^q_j$ for fixed $j$
and varying $q$ are linearly independent.  Since $d_i$ is the largest
element in the lexicographic alphabet $T_i$, the sums $(P^q_j,d_j)$ are
also linearly independent for fixed $j$ and varying $q$, because if
$\sum_q e_q(P^q_j,d_j)=0$ then $\sum_q e_q P^q_j=0$ simply by erasing $d_j$.
The tensor products are therefore also linearly independent, so we must have
$c_q=0$ for all $q$.  This proves that ${\cal L}_S$ is a linearly independent
set.

We now prove that ${\cal W}_S$ is a linearly independent set.  For this,
we construct the framing and fixed structure of a Lyndon insertion word
of length $n$ in ${\cal W}_S$
just as above, by replacing consecutive blocks with single letters, obtaining a word in a smaller alphabet $T'$ and a set of integers corresponding to the
lengths of the consecutive blocks.  For instance, replacing the consecutive
block $(3\sha 4)$ in the Lyndon insertion word $(3\sha 4)152$ by the
letter $b_1$ gives a convergent word $b_1152$ in the alphabet $(1,2,b_1,5)$; 
renumbering this as $(1,2,3,4)$ gives the framing as $3124$ and the associated 
integers as $v_1=2,v_2=1,v_3=2,v_4=1$.  For every fixed structure of this type,
now given as a convergent word $\gamma$ of length $k<n$ together with integers
$v_1,\ldots,v_k$, we let $W(\gamma,v_1,\ldots,v_k)$ denote the subspace
of $V_S$ generated by Lyndon insertion words with the fixed structure 
$(\gamma,v_1,\ldots, v_k)$.  As above, the spaces $W(\gamma,v_1,\ldots,v_k)$
do not intersect, so ${\cal W}_S=\oplus W(\gamma,v_1,\ldots,v_k)$,  and we
have only to show that the set
of Lyndon insertion words with a given fixed structure is a linearly
independent set.  So assume that we have some linear combination
$\sum_q c_qw_q=0$, where the $w_q$ are all Lyndon insertion words of
given fixed structure $(\gamma,v_1,\ldots,v_k)$.  If $k=n$, then these
insertion words are just words, so they are linearly independent and
$c_q=0$ for all $q$.  So assume that at least one $v_i>1$.  We proceed
exactly as above.  Breaking up the tuple $(1,\ldots,n)$ into tuples
$B_1,\ldots,B_k$ as above, and letting $D_1,\ldots,D_m$, $T_j$ and $T'$
denote the same objects as before, we compute the composed residue of $\sum_q
c_qw_q$ and obtain (\ref{compres}).  Then because all of the insertions
$P^q_i$ lie in ${\cal L}_{B_{i_j}}$ and we know that these sets are
linearly independent, we find as above that $c_q=0$ for all $q$.
\end{proof}

\vspace{.3cm}
\subsection{Convergent linear combinations of polygons}

\begin{defn} Let $S=\{1,\ldots,n\}$.
Let $J_S$ be the subspace of ${\cal P}_{S\cup \{d\}}$ spanned
by the set ${\cal L}_S$ of Lyndon insertion shuffles, and let $K_S$ be the 
subspace of ${\cal P}_{S\cup \{d\}}$ spanned by the set ${\cal W}_S$
of Lyndon insertion words.
\end{defn}
\vskip .3cm
We prove the main convergence results in two separate theorems, concerning
the subspaces $I_S$ and $W_S$ of $V_S\simeq {\cal P}_{S\cup \{d\}}$
respectively (cf. definitions \ref{IS} and \ref{WS}).

\vspace{.2cm}
\begin{thm}\label{convJS}
An element $\omega\in I_S\subset {\cal P}_{S\cup \{d\}}$ is convergent if and
only if $\omega\in J_S$.
\end{thm}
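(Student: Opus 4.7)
The plan is to prove both inclusions by induction on $n = |S|$, with the principal tool being the composed residue map of Definition \ref{residues} taken along the chords that bound the insertion blocks of a Lyndon insertion shuffle. The base cases $n \leq 2$ are trivial since ${\cal L}_{\{1,2\}}=\{1\sha 2\}$ exhausts the relevant space.

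For the inclusion $J_S \subseteq (I_S)^{\mathrm{conv}}$, I would show that every Lyndon insertion shuffle $\omega$ is convergent. Let $\omega \in {\cal L}_S$ have fixed structure $(\gamma, v_1, \ldots, v_k)$, framing $\gamma$ on the smaller alphabet of size $k$, and insertions $V_j \in {\cal L}_{B_{i_j}}$ at those positions $i_j$ with $v_{i_j} > 1$. Given a bad chord $E$ of $\omega$ corresponding to a consecutive subset $T' \subseteq S$, the key observation is that $T'$ must lie entirely inside a single insertion block $B_{i_j}$: otherwise, the collapse operation that replaces each maximal consecutive sub-block in the terms of $\omega$ by a single letter would send $\omega$ to the framing $\gamma$ and send $T'$ to a consecutive subset yielding a chord of $\gamma$, contradicting convergence of $\gamma$ (itself a convergent Lyndon shuffle, handled by the induction hypothesis or directly via the sub-base case $k=n$ with no insertions). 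Once $T' \subseteq B_{i_j}$, the residue $\mathrm{Res}^p_E(\omega)$ factors as a tensor product, one tensor factor of which is, up to reindexing, the residue of $V_j$ at the induced bad chord; by the induction hypothesis applied to $V_j \in {\cal L}_{B_{i_j}}$, this factor lies in the shuffle subspace $I_{T'}$, establishing the convergence condition (\ref{conv}).

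For the reverse inclusion, the strategy is the fixed-structure decomposition from the proof of Theorem \ref{linind}, which gives a direct sum decomposition $I_S = \bigoplus L(\gamma, v_1, \ldots, v_k)$. Given a convergent $\omega \in I_S$, write uniquely $\omega = \sum \omega_{(\gamma, v_1, \ldots, v_k)}$, and as a first step show that each summand is itself convergent. Having reduced to a single fixed structure, applying the composed residue $\mathrm{Res}^p_{D_1, \ldots, D_m}$ along the chords $D_1, \ldots, D_m$ that bound the non-trivial insertion blocks $B_{i_1}, \ldots, B_{i_m}$ expresses $\omega_{(\gamma, v_1, \ldots, v_k)}$ exactly as in formula (\ref{compres}), namely as a sum $\sum_q c_q (P^q_1, d_1) \otimes \cdots \otimes (P^q_m, d_m) \otimes \gamma'$ for certain $P^q_j \in V_{B_{i_j}}$. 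The convergence condition then forces each slice (grouping terms with fixed choices of the other $P^q$'s and the framing piece $\gamma'$) to be a convergent element of $I_{B_{i_j}}$, and by the induction hypothesis each such $P^q_j$ lies in $J_{B_{i_j}}$, i.e.\ is a linear combination of Lyndon insertion shuffles on the smaller alphabet. Reassembling the insertions then exhibits $\omega_{(\gamma, v_1, \ldots, v_k)}$ as an element of $J_S$.

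The main obstacle will be rigorously justifying the first step of the backward direction: that the fixed-structure decomposition of a convergent element produces convergent summands. This requires tracking which bad chords can occur in a term with a given fixed structure, and verifying that the composed residue maps send distinct fixed-structure components of $\omega$ into linearly independent pieces of an analogous decomposition of the tensor target, so that no cancellation between different fixed structures can mask a divergent component as convergent in the sum. Once this separation-of-residues lemma is in place, the rest of the argument is the essentially mechanical induction sketched above.
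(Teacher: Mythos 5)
Your overall strategy (induction on $n$, residues along the chords of maximal consecutive blocks, induction hypothesis applied to the extracted insertion pieces) is the same as the paper's, but two of your central claims are respectively false and unproved. In the easy direction, the ``key observation'' that a bad chord of a Lyndon insertion shuffle must lie inside a single insertion block is wrong: for $\omega=2\sha 1(4\sha 35)$ the consecutive sets $\{1,2\}$ and $\{2,3,4,5\}$ are bad chords (they occur as blocks in the terms $21435$ and $12435$, say) although neither lies in the insertion $\{3,4,5\}$. Your purported contradiction rests on reading ``convergent Lyndon shuffle'' as ``has no bad chords'', but a shuffle of convergent Lyndon words generally does have bad chords ($2\sha 13$ has $\{1,2\}$ and $\{2,3\}$); convergence only means that the residues along them land in the shuffle subspace. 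The correct argument, which is the one the paper gives, is that whenever the letters of a consecutive set are distributed over several shuffled factors (or over an insertion together with other factors), they occur only in shuffle combinations across the terms, so $\Res^p_E(\omega)\in I_T\otimes{\cal P}_{S\setminus T\cup\{d\}\cup\{e\}}$ directly; no dichotomy is needed.

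In the hard direction, the two steps you defer are precisely the mathematical content of the theorem, and they do not follow mechanically. (i) That the fixed-structure components of a convergent element of $I_S$ are separately convergent is not established (residues along a given chord receive contributions from several fixed structures, and you must rule out cancellation); the paper avoids this claim altogether by working one maximal block $T$ at a time, splitting $\omega=\gamma_1+\gamma_2$ according to whether $T$ sits inside a single factor, and grouping the residue of $\gamma_1$ by its right-hand factors, which are distinct Lyndon shuffles on the collapsed alphabet and hence linearly independent, to extract elements $(S_{[i]},e)\in I_T$. (ii) To apply the induction hypothesis one must show each extracted piece is convergent along every subchord $E'\subset E$; in the paper this is Step 4, a delicate argument using the composed residue $\Res^p_{E,E'}$ and a further splitting of the index sets, and nothing in your sketch replaces it. Finally, your reassembly ignores that Lyndon insertion shuffles forbid insertions at the leftmost letter of each Lyndon factor: one must show (as the paper does via lemma \ref{usefullemma}) that the would-be insertions $(S_{[i]},e)$ at such positions vanish, since otherwise the reassembled expression is not visibly a combination of elements of ${\cal L}_S$. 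As it stands, the proposal identifies the right skeleton but leaves the genuinely difficult steps unproved and bases the easy direction on a false statement.
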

\begin{proof} {\it Step 1.  The easy direction.} 
One direction of this theorem is easy.  Since $J_S$ is spanned
by Lyndon insertion shuffles, which lie in $I_S$, we only need to show 
that any Lyndon insertion shuffle is convergent.
If it is a shuffle of convergent Lyndon words,
then there are no consecutive blocks in any of the words.  Therefore
if the letters of any consecutive subset $T$ of $S$ appear as a block
in any term of $\omega$, it must be because they appeared in more than
one of the convergent words which are shuffled together.  So
these letters appear as a shuffle, so the residue lies in
$I_T\otimes {\cal P}_{S\setminus T\cup\{d\}}$, which by definition
\ref{convergence} means that $\omega$ is convergent.
Now, if we are dealing with a Lyndon
insertion shuffle with non-trivial insertions, then there are
two kinds of bad chords: those corresponding to these insertions, and
those corresponding to consecutive subsets of the insertion sets.  For
example, in the Lyndon insertion shuffle
\begin{equation}\label{funnyex}\omega=(2\sha 1(4\sha 35,d)=
2\sha (1435+1345+1354)=
\end{equation}
$$ 21435+12435+14235+14325 +14352+21345+12345+13245+$$
$$13425+13452+ 21354+12354+13254+13524+13542, $$
in which $(4\sha 35)$ is inserted into the Lyndon shuffle $2\sha 13$,
and we write $\omega$ in $V_S$ rather than ${\cal P}_{S\cup \{d\}}$
to avoid adding the index $d$ to the end of every word above.
The bad chord $345$ corresponds to the insertion, and the bad chords
$34$ and $45$ appear in certain terms of the shuffle within the insertion.  
For the latter type, since they appear inside an insertion which is
itself a shuffle, their letters only appear in shuffle combinations
within the insertion (for instance $1435+1345=1(3\sha 4)5$ in the
example above), so the residue along these chords is a shuffle.  But also, 
for the bad chords corresponding to an insertion set, the insertion itself 
lies in ${\cal L}_T\subset I_T$, and is precisely one factor of the residue, 
which is thus also a shuffle.  For example, the residue in the example
above along the chord $E=345$ comes from considering only the terms in
(\ref{funnyex}) which have $\{3,4,5\}$ as a consecutive subset, i.e.
the terms which are polygons admitting the chord $345$, namely 
$$\omega=21435+12435+14352+21345+12345+13452+21354+12354+13542 $$
$$=21(435+345+354)+12(435+345+354)+1(435+345+354)2$$
$$=21(4\sha 35)+ 12(4\sha 35)+1(4\sha 35)2$$
and the residue is thus simply
$${\mathrm Res}_{345}(\omega)=(4\sha 35)\otimes (21e+12e+1e2),$$
where $e$ labels the chord $E$, and the insertion itself is the
left-hand factor. Since insertions always
lie in ${\cal L}_T$, they are always shuffles, therefore $\omega$ converges
along the corresponding chords.
\vskip .3cm
\noindent {\it Step 2. The other direction: Induction hypothesis and
base case.} Assume now that $\omega$ is convergent
and lies in $I_S$, so that
we can write $\omega=\sum_i a_i\omega_i$ where each
$\omega_i=(A^i_1\sha \cdots \sha A^i_{r_i},d)$ is a Lyndon shuffle, $r_i>1$.
We say that a consecutive block appearing in any $A^i_j$ is
maximal if the same block does not appear in that factor or in any other factor
inside a bigger consecutive block.  Factors may appear which contain more than
one consecutive block, but the maximal blocks are disjoint.

We prove the result by induction on the length of the alphabet
$S=\{1,\ldots,n\}$.  The smallest case is $n=3$, since for $n=2$,
the polygons are triangles and have no chords.  For $n=3$, let
$$\omega=c_1(12\sha 3,d)+c_2(13\sha 2,d)+
c_3(1\sha 2\sha 3,d)+c_4(23\sha 1,d)$$
be a linear combination of all the Lyndon shuffles for $n=3$.
The bad chords are $E=\{1,2\}$, $F=\{2,3\}$. We have
$$\Res^p_E(\omega)=c_1(1,2,e)\otimes (e\sha 3,d)
+c_2(1\sha 2,e)\otimes (e,3,d)$$
$$+c_3(1\sha 2,e)\otimes (e\sha 3,d)+c_4(1\sha 2,e)\otimes (e,3,d).$$
For this to converge means that the left-hand parts of the two right-hand
tensor factors $(e,3,d)$ and $(e\sha 3,d)$ must lie in $I_{\{1,2\}}$.
Since three of the four left-hand parts already lie in $I_{\{1,2\}}$,
the fourth one must as well, which must mean that $c_1=0$.  This is
the condition for $\omega$ to converge on $E$.  Now let us consider $F=
\{2,3\}$.  We have
$$\Res^p_F(\omega)=c_1(2\sha 3,f)\otimes (1,f,d)
+c_2(2\sha 3,f)\otimes (1,f,d)$$
$$+c_3(2\sha 3,e)\otimes (1\sha f,d)+c_4(2,3,f)\otimes (1\sha f,d).$$
This gives $c_4=0$ as the condition for $\omega$ to converge on $F$.
Therefore, we find that $\omega$ is a linear combination of $13\sha 2$
and $1\sha 2\sha 3$, which are exactly the elements of the basis
${\cal L}_{\{1,2,3\}}$ of $J_S$. This settles the base case $n=3$.

\vskip .3cm
The induction hypothesis is that for every alphabet
$S'=\{1,\ldots,i\}$ with $i<n$, if $\omega\in I_{S'}$
is convergent, then $\omega\in J_{S'}$.

\vspace{.2cm}
\noindent {\it Step 3. Construction of the insertion terms $(S_{[i]},e)\in
I_T$.}
Now let $S=\{1,\ldots,n\}$ and assume that $\omega\in I_S$ is convergent.
Write $\omega$ as a linear combination of Lyndon shuffles 
$$\omega=\sum_i c_i\omega_i=\sum_ic_i(A^i_1\sha A^i_2\sha
\cdots A^i_{r_i},d).$$
If no consecutive block appears in any $A^i_j$, then $\omega$ is a linear
combination of convergent Lyndon words, so it is in $J_S$ by definition.  
Assume some consecutive blocks do appear, and consider a maximal consecutive 
block $T$, corresponding to a bad chord $E$.  
Decompose $\omega=\gamma_1+\gamma_2$ where $\gamma_k$ is the
sum $\sum_{i\in I_k} c_i\omega_i$, with $I_1$ the set of indices $i$
for which $T$ appears as a block in some $A^i_j$, which by reordering 
shuffled pieces we may
assume to be $A^i_1$, and $I_2$ is the set of indices for which $T$ does not
appear as a block in any $A^i_j$.  Then because letters of $T$ appear
scattered in different $A^i_j$ in each term of $\gamma_2$, any time they
appear as a block in a term of $\gamma_2$, they must appear in several
terms as a shuffle combination, so 
$\Res^p_E(\gamma_2)\in I_T\otimes {\cal P}_{S\setminus T\cup\{e\}\cup \{d\}}$.
Thus $\gamma_2$ converges along $E$. Since we are assuming that $\omega$
is convergent, $\gamma_1$ must then also converge, so we must have
\begin{equation}\label{inshuf}
\Res^p_E(\gamma_1)\in I_T\otimes {\cal P}_{S\setminus T\cup\{d\}\cup \{e\}}.
\end{equation}

For each $i\in I_1$, write $A^i_1=B^i_1Y^iC^i_1$, where $Y^i$ consists of the
letters of $T$ in some order and $B^i_1$ is a (possibly empty) Lyndon word.  

We have
\begin{equation}\label{small}
\Res^p_E(\gamma_1)=\sum_{i\in I_1} c_i(Y^i,e)\otimes (B^i_1eC^i_1\sha
A^i_2\sha\cdots\sha A^i_{r_i},d)
\end{equation}
Note that the alphabet $(S\setminus T)\cup \{e\}$
corresponding to all of the right-hand factors has the lexicographic
ordering inherited from $S$ by deleting the consecutive block
of letters $T$ and replacing it with the unique character $e$.  Thus,
all of the words appearing in the shuffles of the right-hand factors
are Lyndon words.  Indeed, the $A_j^i$, $j>1$, are Lyndon by definition, 
the words $B^i_1eC^i_1$ with non-empty $B^i_1$ are Lyndon because of
the assumption that $A^i_1=B^i_1Y^iC^i_1$ is a Lyndon word and therefore the
smallest character appears on the left of $B^i_1$, and the words $eC^i_1$ 
which appear when $B^i_1$ is empty are Lyndon because $A^i_1=Y^iC^i_1$ is
Lyndon and the characters of $Y^i$ (i.e. those of $T$) are consecutive,
so they are all smaller than those appearing in $C^i_1$; thus $e$ is 
less than any character of $C^i_1$ in the inherited ordering.  Thus, 
all of the right-hand factors of (\ref{small}) are Lyndon shuffles.  

Putting an equivalence relation on $I_1$ by letting
$i\sim i'$ if the right-hand factors of (\ref{small}) are
equal, and letting $[i]$ denote the equivalence classes for this
relation, we write the residue as
\begin{equation}\label{smallbis}
\Res^p_E(\gamma_1)=\sum_{[i]\subset I_1} \bigl(\sum_{i\in [i]} c_i(Y^i,e)\bigr)
\otimes (B_1^{[i]}eC_1^{[i]}\sha A^{[i]}_2\sha\cdots\sha A^{[i]}_{r_{[i]}},d).
\end{equation}
Since the right-hand factors in the sum over $[i]$ are distinct
Lyndon shuffles, the set of right-hand factors forms a linearly independent 
set.  Therefore by (\ref{inshuf}), we must have
\begin{equation}\label{defins}(S_{[i]},e)=\sum_{i\in [i]} c_i(Y^i,e)\in I_T
\end{equation}
for each $[i]\subset I_1$.

Let us show that $(S_{[i]},e)=0$ whenever $B^{[i]}_1$ is empty.
For all $i\in I_1$ such that $B^i_1$ is empty, we have $A^i_1=Y^iC^i_1$, 
and since these are all Lyndon words, the smallest character of $T$, 
say $a$, is always on the left of $Y^i$, so we can write $Y^i=aY^i_0$ and 
$A^i_1=aY_0^iC^i_1$ for all such $i$.  Then for an equivalence class
$[i]$ of such $i$, the $(S_{[i]},e)$ of (\ref{defins}) can be written
$$(S_{[i]},e)=\sum_{i\in [i]} c_i(Y^i,e)=\sum_{i\in [i]} c_i(aY_0^i,e)\in I_T.$$
But by lemma \ref{usefullemma}, a sum of words all having the same character
(here $a$) on the left and the same character (here $e$) on the right 
cannot be a shuffle unless it is zero, so $(S_{[i]},e)=0$ if $B^{[i]}_1$
is empty.

\vspace{.2cm}
\noindent {\it Step 4. Proof that the insertion terms 
$(S_{[i]},e)$ lie in $J_T$.}  
For this, we first need to show that $(S_{[i]},e)$ converges on every
subchord of $E$, i.e. every consecutive subset inside the set $T$, before
applying the induction hypothesis.  Let $E'$ be a subchord of $E$, 
corresponding to a consecutive block $T'$ strictly contained in $T$.

Decompose the set of indices $I_1$ into two subsets $I_3$ and
$I_4$, where $I_3$ contains the indices $i\in I_1$ such that $T'$ appears
as a consecutive block inside the block $T$ appearing in $A^i_1$,
and $I_4$ contains the indices $i\in I_1$ such that the letters of $T'$ do not
appear consecutively inside the block $T$. Similarly, partition $I_2$,
the set of indices in the sum $\omega=\sum_i c_i\omega_i$ for
which $T$ does not appear as a block in $A^i_1$,
into two sets $I_5$ and $I_6$, where $I_5$ contains the indices $i\in I_2$
such that $T'$ appears as a block in some $A^i_j$ which we may assume
to be $A^i_1$, and $I_6$ contains the indices $i\in I_2$ of the terms in which
$T'$ does not appear as a block in any $A^i_j$.  We have corresponding
decompositions $\gamma_1=\gamma_3+\gamma_4$, $\gamma_2=\gamma_5+\gamma_6$.

As before, $T'$ must appear as a shuffle in $\gamma_6$, so $\gamma_6$
converges along $E'$.  As for $\gamma_4$, since $T'$ does not appear as either
a block or a shuffle, the residue along $E'$ is $0$.  Since by assumption
$\omega=\gamma_3+\gamma_4+\gamma_5+\gamma_6$ converges along $E'$, we see that
$\gamma_3+\gamma_5$ must converge
along $E'$.  Let us show that in fact both $\gamma_3$ and $\gamma_5$
converge along $E'$.

Write $A^i_1=R^iZ^iS^i$ for every $i\in I_3\cup I_5$, where $Z^i$ is a word
in the letters of $T'$.  Note that $R^i$ is Lyndon, and non-empty by the
identical reasoning to that used above to show that $B^i_1$ is non-empty.
Then for $k=3,5$, we have
\begin{equation}\label{small2}
\Res^p_{E'}(\gamma_k)=\sum_{i\in I_k} c_i(Z^i,e')\otimes (R^ie'S^i\sha
A^i_2\sha\cdots\sha A^i_{r_i},d).
\end{equation}
For $k=3,5$, put the equivalence relation on $I_k$ for which
$i\sim  i'$ if the right-hand factors of (\ref{small2}) are equal, and
let $\langle i\rangle$ denote the equivalence classes for this relation.
Note that because for $i\in I_3$, $T'$ appears as a block of $T$,
the word $B^i_1$ must appear as the left-hand part of $R^i$, and the word
$C^i_1$ must appear as the right-hand part of $S^i$.
Therefore, in particular, the new equivalence relation
is strictly finer than the old, i.e. the equivalence class $[i]$ breaks
up into a finite union of equivalence classes $\langle i\rangle$.
The residues for $k=3,5$ can now be written
\begin{equation}
\Res^p_{E'}(\gamma_k)=\sum_{\langle i\rangle\subset I_k}
\bigl(\sum_{i\in\langle i\rangle} c_i(Z^i,e')
\bigr) \otimes (R^{\langle i\rangle }e'S^{\langle i\rangle}\sha
A^{\langle i\rangle }_2 \sha\cdots\sha
A^{\langle i\rangle}_{r_{\langle i\rangle}}).
\end{equation}
Then since the right-hand factors for each $k$ are distinct Lyndon shuffles,
they are linearly independent. Furthermore, none of the right-hand
factors occurring in the sum for $k=3$
can ever occur in the sum for $k=5$ for the following reason:
the Lyndon words $R^ie'S^i$ appearing for $k=3$ all have the letters of
$T\setminus T'$ grouped around $e'$, whereas none of the Lyndon words
$R^ie'S^i$ have this property.  Therefore all the right-hand
factors from the residues of $\gamma_3$ and $\gamma_5$ together form
a linearly independent set, so we find that all the left-hand factors
\begin{equation}\label{small3}
\sum_{i\in \langle i\rangle \subset I_k} (Z^i,e')\in I_{T'},
\end{equation}
so that both $\gamma_3$ and $\gamma_5$ converge along $E'$.  In
particular, this means that both $\gamma_1$ and $\gamma_2$ converge
along $E'$.

Now, to determine that the $(S_{[i]},e)$ of (\ref{defins}) converge
along $E'$, we will use (\ref{smallbis}) to compute the composed residue 
map $\Res^p_{E,E'}(\gamma_1)$.  We are only concerned with the set
of indices $I_1=I_3\cup I_4$ in (\ref{smallbis}).
For each $i\in I_3$, write $Y^i=U^iZ^iV^i$ where $Z^i$ is a word in the
letters of $T'$, so that $R^i=B^iU^i$, $S^i=V^iC^i$, and
$A^i_1=B^iU^iZ^iV^iC^i$.
Then by (\ref{small2}), we have
$$\Res^p_E(\gamma_1)=\sum_{[i]\in I_3}\biggl(\sum_{i\in [i]} c_i(U^iZ^iV^i,e)
\biggr)\otimes \bigl(B^{[i]}_1eC^{[i]}_1\sha A^{[i]}_2 \sha  \cdots \sha
A^{[i]}_{r_{[i]}},d\bigr)+$$
$$\sum_{[i]\in I_4}\biggl(\sum_{i\in [i]} c_i(Y^i,e)\biggr)\otimes
\bigl(B^{[i]}_1eC^{[i]}_1\sha A^{[i]}_2 \sha  \cdots \sha
A^{[i]}_{r_{[i]}},d\bigr).  $$
The terms for $i\in I_4$ converge along $T'$, so they vanish when taking
the composed residue, and we find
$$\Res^p_{E,E'}(\gamma_1)=
\sum_{[i]\in I_3} \biggl(\sum_{i\in [i]} c_i(Z^i,e')
\otimes (U^ie'V^i,e)\biggr)\otimes
\bigl(B^{[i]}_1eC^{[i]}_1\sha A^{[i]}_2 \sha  \cdots \sha
A^{[i]}_{r_{[i]}},d\bigr).$$
Since for each $[i]\subset I_3$, the right-hand factors are as usual distinct
and linearly independent, this means that for each $[i]\subset I_3$,
$$\Res^p_{E'}(S_{[i]},e)=\sum_{i\in [i]}
c_i(Z^i,e')\otimes (U^ie'V^i,e)\in {\cal P}_{T'\cup \{e'\}}\otimes
{\cal P}_{T\setminus T'\cup\{e'\}\cup \{e\}}.$$
Now, the equivalence relation on $i\in [i]\subset I_3$ given by
$i\sim i'$ if $U^i=U^{i'}$ and $V^i=V^{i'}$ is the same as the 
equivalence relation $i \sim i'$ if $R^i=R^{i'}$ and $S^i=S^{i'}$ since 
$R^i=B^iU^i$ and $S^i=V^iC^i$.  So the classes $\langle i\rangle$ correspond
to sets of $i$ for which $U^i$ and $V^i$ are identical.
Thus for each $[i]\subset I_3$, we can write
$$\Res^p_{E'}(S_{[i]},e)=\sum_{\langle i\rangle\subset [i]}
\biggl(\sum_{i\in \langle i\rangle} c_i(Z^i,e')\biggr)
\otimes (U^{\langle i\rangle}e'V^{\langle i\rangle},e),$$
where the right-hand factors are all distinct words.
Then (\ref{small3}) shows that this sum lies in
$I_{T'}\otimes {\cal P}_{T\setminus T'\cup \{e'\}\cup \{e\}}$, so in fact
$(S_{[i]},e)$ converges along $E'$.  For $[i]\subset I_4$, we have
saw that $\Res^p_{E'}((S_{[i]},e))=0$ since $T'$ never occurs as a block
for $i\in I_4$.  Thus $(S_{[i]},e)$ converges along $E'$
for all $[i]\subset I_1$.  

Since we have just shown that $(S_{[i]},e)$ converges along every
subchord $E'$ of $E$, i.e. along the chords corresponding to every
consecutive subblock $T'$ of $T$, we see that each term
$(S_{[i]},e)$ is convergent along all its bad chords.
Thus, by the induction hypothesis, $(S_{[i]},e)\in J_T$.

\vspace{.2cm}
\noindent {\it Step 5. Construction of the insertions.}
The above construction shows that we can write $\omega=\gamma_1+\gamma_2$ with
$$\gamma_1=\sum_{[i]\in I_1}  c_{[i]} \bigl(B^{[i]}S_{[i]}C^{[i]}\sha
A^{[i]}_2\sha \cdots \sha A^{[i]}_{r_i},d\bigr)$$
with $S_{[i]}\in J_T$.  This means that the maximal block $T$, which appeared
only in $\gamma_1$, has been replaced by an insertion in the sense of the
definition of Lyndon insertion shuffles.
To conclude the proof of the theorem, we successively replace each of the
maximal blocks in $\omega$ by insertion terms in the same way, in any
order, since maximal blocks are disjoint.  
The final result displays $\omega$ as a linear combination of convergent
Lyndon shuffles and Lyndon insertion shuffles, so $\omega\in J_S$.
\end{proof}

The following theorem is the exact analogy of the previous one, but with
the actual shuffles in $I_S$ replaced by the words in $W_S$ that have
$1$ just to the left of $n$, and the set of Lyndon insertion shuffles
replaced by Lyndon insertion words, which considerably simplifies the proof.
\begin{thm}\label{convKS}
Let $\eta\in W_S\subset {\cal P}_{S\cup \{d\}}$.
Then $\eta$ is convergent if and only if $\eta\in K_S=\langle
{\cal W}_S\rangle$.
\end{thm}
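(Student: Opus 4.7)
The plan is to mimic the structure of the proof of Theorem \ref{convJS}, replacing shuffles with $1n$-words throughout; as the author notes, this simplification removes the need to track shuffle structure inside each summand.

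For the easy direction, I would show that every element of $\mathcal{W}_S$ is convergent. A special convergent $1n$-word has no consecutive block by definition, hence no bad chord. For a proper Lyndon insertion word built from a special convergent outer word $w'$ with insertions $V_i\in\mathcal{L}_{D_i}$, the bad chords fall into two classes: chords $E$ corresponding to an insertion set $D_i$, and subchords $E'$ corresponding to consecutive subsets $T'\subsetneq D_i$. For the first class, the residue factors as $(V_i,e)\otimes(\text{outer skeleton})$, and $(V_i,e)\in I_{D_i}$ because $V_i\in\mathcal{L}_{D_i}\subset J_{D_i}$, so the residue lies in $I_{D_i}\otimes\mathcal{P}_{\cdots}$, as required. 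For the second class, the residue is supported entirely inside the insertion, and since $V_i\in J_{D_i}$ is itself convergent by Theorem \ref{convJS}, the required shuffle property propagates.

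For the hard direction, I would proceed by induction on $n=|S|$, with a small base case handled directly. Given $\eta=\sum_i c_i\eta_i\in W_S$ convergent, with each $\eta_i$ a distinct $1n$-word, if no term has any consecutive block then each $\eta_i$ is already a special convergent $1n$-word and $\eta\in K_S$. Otherwise, pick a maximal consecutive block $T\subset\{1,\ldots,n\}$ corresponding to a bad chord $E$, and split $\eta=\gamma_1+\gamma_2$, where $\gamma_1=\sum_{i\in I_1}c_i\eta_i$ collects the terms in which the letters of $T$ appear as a consecutive subword $Y^i$, so $\eta_i=B^iY^iC^i$. Since no term of $\gamma_2$ admits the chord $E$, we have $\Res^p_E(\gamma_2)=0$. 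Grouping the terms of $\gamma_1$ by the equivalence relation $i\sim i'$ iff $B^i=B^{i'}$ and $C^i=C^{i'}$, I get
\begin{equation*}
\Res^p_E(\gamma_1)=\sum_{[i]}\Bigl(\sum_{i\in[i]}c_i(Y^i,e)\Bigr)\otimes(B^{[i]}eC^{[i]},d),
\end{equation*}
whose right-hand factors are distinct words, hence linearly independent. The assumed convergence of $\eta$ along $E$ then forces $S_{[i]}:=\sum_{i\in[i]}c_i(Y^i,e)\in I_T$ for each class $[i]$.

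Next I would show that each $S_{[i]}$ is itself convergent along every subchord $E'$ of $E$, by running the same residue analysis at $E'$ on $\gamma_1$ and refining the equivalence relation, exactly as in Step 4 of the proof of Theorem \ref{convJS}. Once this is established, Theorem \ref{convJS} gives $S_{[i]}\in J_T$, so $\gamma_1$ is a sum of terms of the form $B^{[i]}S_{[i]}C^{[i]}$ with $S_{[i]}\in J_T$ — precisely the effect of replacing the maximal block $T$ by a bona fide insertion. Iterating this construction over the (disjoint) maximal blocks of $\eta$, the outer skeleton eventually becomes a special convergent word and the whole $\eta$ is displayed as a linear combination of Lyndon insertion words, proving $\eta\in K_S$. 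The main technical obstacle I expect is bookkeeping around the positions of the distinguished letters $1$ and $n$ in the outer skeleton: since $T$ is a consecutive subset of $\{1,\ldots,n\}$ it contains at most one of $\{1,n\}$, and one must check case by case (namely $1,n\notin T$; $1\in T$; $n\in T$) that after the substitution $Y^i\mapsto e$ the outer word $B^ieC^i$ retains the relative adjacency of the smallest and largest characters in the inherited lexicographic order on $(S\setminus T)\cup\{e\}$, so that it qualifies as a $1n$-word in the reduced alphabet and matches the outer-word requirement in Definition \ref{insw}.
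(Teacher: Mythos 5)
Your outline reproduces the paper's strategy almost exactly (residue along a maximal block $T$, extraction of the insertion terms $S_{[i]}\in I_T$, convergence along subchords, appeal to theorem \ref{convJS} to get $S_{[i]}\in J_T$, iteration over disjoint maximal blocks), and the easy direction is fine. The genuine gap is in the final point, which you dismiss as ``bookkeeping'': the treatment of a maximal block $T$ meeting $\{1,n\}$. Your proposed fix — check case by case that after the substitution $Y^i\mapsto e$ the reduced word retains the adjacency of the smallest and largest letters — does not resolve anything, because in these cases the adjacency \emph{is} retained (if $n\in T$ the block must begin with $n$, so the reduced word has $1$ just left of $e$, with $e$ now the largest letter; symmetrically if $1\in T$), and yet the resulting object is \emph{not} a Lyndon insertion word: definition \ref{insw} requires $v_1=v_k=1$, i.e.\ no nontrivial insertion at the positions of the smallest and largest letters of the outer alphabet. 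So your iteration would terminate with expressions carrying an insertion at an extremal position, which are not in $\langle{\cal W}_S\rangle$ by definition, and the proof cannot conclude $\eta\in K_S$.

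What is actually needed — and what the paper proves as a separate lemma (lemma \ref{lemleft}) before running the block-replacement argument — is that such blocks simply cannot occur in a convergent element of $W_S$. The reason is a linear-independence statement, not bookkeeping: since every term of $\eta$ is a $1n$-word, any block containing $n$ must begin with $n$ (the letter preceding $n$ is always $1\notin T$), so after grouping by identical right-hand residue factors, convergence along the chord $E$ would force a sum $\sum_i a_i(n,Z^i,e)$ of words all sharing the leftmost letter $n$ to lie in $I_T$; by lemma \ref{usefullemma} this forces all $a_i=0$. The case $1\in T$ is handled by the mirror-image argument (all blocks end in $1$). Only after excluding these blocks can one restrict attention to $T\subset\{2,\ldots,n-1\}$ and carry out your replacement procedure, which then does land in the span of Lyndon insertion words. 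Without this step your argument is incomplete, and your anticipated resolution points in the wrong direction.
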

\begin{proof} The proof that $\omega\in K_S$ is convergent is exactly
as at the beginning of the proof of the previous theorem.  So consider
the other direction.  Let $\omega\in W_S$, so that we can write
$$\omega=\sum_i a_i\eta_i$$
where each $\eta_i$ is a $1n$-polygon (a $1n$-word concatenated with $d$),
and assume $\omega$ is convergent.
The only possible bad chords for $\omega$ are the consecutive blocks
appearing in the $\eta_i$.  Let $T$ be a subset of $S$ corresponding
to a maximal consecutive block.
\vskip .3cm
\begin{lem}\label{lemleft}No maximal consecutive block
having non-trivial intersection with $\{1,n\}$ can appear in
any of the $1n$-words $\eta_i$ of $\omega$.
\end{lem}
\begin{proof}  If $T$ is a maximal block containing both $1$ and $n$,
then $T=\{1,\ldots,n\}$ which does not correspond to a chord.

Assume now that $T=\{m,\ldots,n\}$ with $m>1$.  Write 
$\eta_i=(K^i,1,n,Z^i,H^i,d)$ where $Z^i$ is an ordering of 
$\{m,\ldots,n-1\}$.  Let $E$ be the chord corresponding to $T$.
We have
$$\Res^p_E(\sum_i a_i\eta_i)=\sum_i a_i(n,Z^i,e)\otimes (K^i,1,e,H^i,d).$$
Convergence implies that for any constant words $K$, $H$, the sum
\begin{equation}
\sum_{i\mid K^i=K,H^i=H} a_i(n,Z^i,e)\in I_T.
\end{equation}
But by lemma \ref{usefullemma}, it is impossible for a sum of words 
all having the same character on the left to be equal to a shuffle.

The case where $T=\{1,\ldots,m\}$ with $m<n$ is identical, except for
an easy adaptation of lemma \ref{usefullemma} to show that a sum of words 
all having the same character on the right cannot be equal to a shuffle.
\end{proof}

Now we can complete the proof of the theorem.  
Let $\omega=\sum_i a_i\eta_i$ be
a sum of $1n$-words which converges, and consider a maximal consecutive
block $T\subset \{2,\ldots,n-1\}$.  Let $I_1$ be the set of indices
$i$ such that $\eta_i$ contains the block $T$ and $I_2$ the other indices.
For $i\in I_1$, write $\eta_i=(K^i,Z^i,H^i,d)$ where $Z^i$ is an ordering of
$T$.  Then
$$\Res^p_T(\omega)=\sum_{i\in I_1} a_i(Z^i,e)\otimes (K^i,e,H^i,d).$$
Let $i\sim i'$ be the equivalence relation on $I_1$ given by
$K^i=K^{i'}$ and $H^i=H^{i'}$.  Then
$$\Res^p_T(\omega)=\sum_{[i]\in I_1} \bigl(\sum_{i\in [i]}
a_i(Z^i,e)\bigr)\otimes (K^{[i]},e,H^{[i]},d),$$
and the right-hand factors are all distinct (linearly independent) words, so
by the assumption that $\omega$ convergence along $E$, we have
$$(S_{[i]},e)=\sum_{i\in [i]} a_i(Z^i,e)\in I_T$$
for each $[i]\subset I_1$.  Therefore we can write $\omega$ as
$$\omega=\sum_{[i]\subset I_1} a_i(K^{[i]},S_{[i]},H^{[i]},d)+
\sum_{i\in I_2} a_i\eta_i,$$
with the maximal block $T$ replaced by the insertion $S_{[i]}$.
We prove that $S_{[i]}\in J_T$  exactly as in the proof of the previous
theorem: considering a maximal consecutive block $T'\subset T$ occurring
in a factor of $S_{[i]}$, one shows that $S_{[i]}$ converges along $T'$
if and only if $\omega$ converges along $T'$.  Since $\omega$ does
converge by assumption, $S_{[i]}$ also converges, and since this holds
for all consecutive blocks $T'\subset T$, $S_{[i]}$ converges on all
its subdivisors and therefore $S_{[i]}\in J_S=\langle {\cal L}_S\rangle$.
Finally, one deals with the disjoint maximal blocks appearing in $\omega$ one
at a time until no blocks at all remain, expressing $\omega$ explicitly
as a linear combination of Lyndon insertion words.
\end{proof}

\vspace{.3cm}
\noindent {\bf A summary of the results in this chapter.} We introduced
the following spaces, where $S=\{1,\ldots,n\}$:
\begin{itemize}
\item $V_S$: the $\Q$-vector space generated by words in $S$ having distinct
letters 
\item $I_S$: the $\Q$-vector space generated by shuffles of disjoint words of $V_S$ (definition \ref{IS})
\item ${\cal L}_S$: the set of Lyndon insertion shuffles (definition \ref{defngrr}),
which are linearly independent (theorem \ref{linind})
\item $J_S$: the subspace of $I_S$ spanned by ${\cal L}_S$, which forms the 
set of convergent elements of $I_S$ (theorem \ref{convJS})
\item $W_S$: the $\Q$-vector space generated by words in $V_S$, so that by
Radford's theorem, we have $V_S=I_S\oplus W_S$
\item ${\cal W}_S$: the set of Lyndon insertion words (definition \ref{insw}), which
are linearly independent (theorem \ref{linind})
\item $K_S$: the subspace spanned by ${\cal W}_S$, which forms the set of
convergent elements of $W_S$ (theorem \ref{convKS}).
\end{itemize}
\vspace{.3cm}
\section{Explicit generators for ${\cal FC}$ and $H^\ell(\Mod_{0,n}^\delta)$}

In this chapter, we show that the map from polygons to cell-forms is
surjective, and compute its kernel.  From this and the previous
chapter, we will conclude that the pairs $(\delta,\omega)$, where
$\omega$ runs through the set ${\cal W}_S$ of Lyndon insertion words
for $n\ge 5$, form a generating set for the formal cell-zeta algebra
${\cal FC}$. In the final section, we show that the images of the
elements of ${\cal W}_S$ in the cohomology $H^\ell(\Mod_{0,n})$
yield an explicit basis for the convergent cohomology
$H^\ell(\Mod_{0,n}^\delta)$, determine its dimension, and compute the
cohomology basis explicitly for small values of $n$.  We recall that
$\Mod_{0,n}^\delta$ is defined in section \ref{thing}, and that by
the ``convergent cohomology'', we mean the cohomology classes of $\ell$-forms
with logarithmic singularities which converge on the closure of the 
standard cell.

\subsection{From polygons to cell-forms}\label{sec41}
Let $S=\{1,\ldots,n\}$.  The bijection $\rho:S\cup \{d\}\rightarrow
\{0,t_1,\ldots,t_{\ell+1},1,\infty\}$ given by associating the
elements $1,\ldots,n,d$ to $0,t_1,\ldots,t_{\ell+1}, 1,\infty$
respectively, induces a  map $f$ from polygons to cell-forms:
$$\eta=(\sigma(1),\ldots,\sigma(n),d)\buildrel{f}\over\rightarrow
\omega_\eta=[\rho(\sigma(1)),\ldots,\rho(\sigma(n)),\infty].$$ The
map $f$ extends by linearity to a map from ${\cal P}_{S\cup \{d\}}$
to the cohomology group $H^{n-2}(\Mod_{0,n+1})$.  The purpose of
this section is to prove that $f$ is a surjection, and to determine
its kernel.

Recall that $I_S\subset {\cal P}_{S\cup \{d\}}$ denotes the subvector space of
${\cal P}_{S\cup \{d\}}$ spanned by the {\it shuffles with respect to the
element $d$}, namely by the linear combinations of polygons
$$(S_1 \sha S_2, d)$$
for all partitions $S_1\coprod S_2$ of $S$.

\begin{prop}\label{prop41} Let $S=\{1,\ldots,n\}$.  Then the cell-form map
$$f:{\cal P}_{S\cup \{d\}} \To H^{n-2}(\Mod_{0,n+1})$$
is surjective with kernel equal to the subspace $I_S$.
\end{prop}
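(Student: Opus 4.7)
The proof naturally splits into three pieces: surjectivity of $f$, the inclusion $I_S \subset \ker(f)$, and the reverse inclusion $\ker(f) \subset I_S$. The key structural input tying them together is Radford's theorem, which, as recorded in the chapter summary, gives a direct sum decomposition $V_S = I_S \oplus W_S$ and hence, via the isomorphism \eqref{basiciso}, a decomposition $\mathcal{P}_{S\cup\{d\}} = I_S \oplus W_S$.

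For surjectivity, I would invoke Theorem \ref{thm01cellsspan} applied to $\mathfrak{M}_{0,n+1}$: the $(n-1)!$ $01$-cell-forms constitute a basis of $H^{n-2}(\mathfrak{M}_{0,n+1})$. Under the bijection $\rho$ (which sends the labels $1$ and $n$ to $0$ and $1$ respectively, and $d$ to $\infty$), a $01$-cell-form is precisely $f(\eta)$ for a polygon $\eta \in \mathcal{P}_{S\cup\{d\}}$ in which $1$ appears adjacent to $n$, i.e., for some $\eta \in W_S$. Hence $f(W_S)$ already spans the entire cohomology, proving surjectivity. Meanwhile, the inclusion $I_S \subset \ker(f)$ is an immediate consequence of Corollary \eqref{cellfunction1shuffsare0}: a generator $(A \sha B, d)$ of $I_S$ maps under $f$ to a cell-form whose underlying cell-function is a shuffle with respect to the single element $\rho(d)$, and all such shuffles vanish.

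For the reverse inclusion $\ker(f) \subset I_S$, the crucial numerical observation is that $\dim W_S = (n-1)!$ by definition \ref{WS}, which equals $\dim H^{n-2}(\mathfrak{M}_{0,n+1})$ by Theorem \ref{thm01cellsspan}. Combined with the surjectivity of $f|_{W_S}$ established above, this forces $f|_{W_S}$ to be an isomorphism, hence injective. Together with $I_S \subset \ker(f)$ and the direct sum decomposition, this yields $\ker(f) = I_S$. The one bit of care required, which is the only genuine step beyond assembling prior results, is matching the purely combinatorial notion of a $1n$-polygon (definition \ref{WS}) with that of a $01$-cell-form under the chosen bijection $\rho$; once this labeling convention is pinned down, every piece of the argument reduces to a direct citation of Theorem \ref{thm01cellsspan}, Corollary \eqref{cellfunction1shuffsare0}, and Radford's decomposition.
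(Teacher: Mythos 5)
Your proposal is correct and follows essentially the same route as the paper: surjectivity from the $01$ cell-form basis of Theorem \ref{thm01cellsspan}, the inclusion $I_S\subset\ker(f)$ from the one-point shuffle identity (\ref{cellfunction1shuffsare0}), and the reverse inclusion by a dimension count resting on Radford's theorem. The only cosmetic difference is that you package the count as $\dim W_S=(n-1)!$ together with the splitting ${\cal P}_{S\cup\{d\}}=W_S\oplus I_S$, whereas the paper counts $\dim\bigl({\cal P}_{S\cup\{d\}}/I_S\bigr)=(n-1)!$ directly via the Lyndon-word basis; these are equivalent formulations of the same argument.
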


\begin{proof} The surjectivity is an immediate consequence of the
fact that $01$ cell-forms form a basis of $H^{n-2}(\Mod_{0,n+1})$
(theorem \ref{thm01cellsspan}), since all such cell-forms are the
images under $f$ of polygons having the edge labelled $1$ next to
the one labelled $n$.

Now, $I_S$ lies in the kernel of $f$ by the corollary to proposition
\ref{shufprod}. So it only remains to show that the kernel of $f$ is
equal to $I_S$.  But this is a consequence of counting the
dimensions of both sides.  By theorem \ref{thm01cellsspan}, we know
that the dimension of $H^{n-2}(\Mod_{0,n+1})$ is equal to $(n-1)!$.
As for the dimension of ${\cal P}_{S\cup \{d\}}/I_S$, recall from
the beginning of chapter 3 that ${\cal P}_{S\cup \{d\}}\simeq V_S$,
which can be identified with the graded $n$ part of the quotient of
the polynomial algebra on $S$ by the relation $w=0$ for all words
$w$ containing repeated letters.  Thus $V_S$ is the vector space
spanned by words on $n$ distinct letters, so it is of dimension
$n!$. But instead of taking a basis of words, we can take the Lyndon
basis of Lyndon words (words with distinct characters whose smallest
character is on the left) and shuffles of Lyndon words.  The
subspace $I_S$ is exactly generated by the shuffles, so the
dimension of the quotient is given by the number of Lyndon words on
$S$, namely $(n-1)!$. Therefore ${\cal P}_{S\cup \{d\}}/I_S\simeq
H^{n-2}(\Mod_{0,n+1})$.
\end{proof}

\begin{rem}
The above proof has an interesting consequence.  Since the map from
polygons to differential forms does not depend on the role of $d$,
the kernel cannot depend on $d$, and any other element of $S\cup
\{d\}$ could play the same role.  Therefore $I_S$, which is defined
as the space generated by shuffles with respect to the element $d$,
is equal to the space generated by shuffles of elements of $S\cup
\{d\}$ with respect to any element of $S$; it is simply the subspace
generated by {\it shuffles with respect to one element} of $S\cup
\{d\}$.
\end{rem}

\begin{cor}
Let $W_S\subset {\cal P}_{S\cup \{d\}}$ be the subset of polygons
corresponding to $1n$-words (concatenated with $d$).  Then
$$f:W_S\simeq H^{n-2}(\Mod_{0,n+1}).$$
\end{cor}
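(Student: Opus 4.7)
The plan is to deduce this corollary from Proposition \ref{prop41} combined with Theorem \ref{thm01cellsspan}, with Lemma \ref{usefullemma} handling the only nontrivial point. The argument is essentially a dimension count, the real content being the identification of $f(W_S)$ with the span of the $01$ cell-forms.

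First I would establish the following matching. Under the bijection $\rho:S\cup\{d\}\to\{0,t_1,\ldots,t_{n-2},1,\infty\}$ with $1\mapsto 0$, $n\mapsto 1$, and $d\mapsto\infty$, a $1n$-word $a_1\cdots a_n$ extended to the polygon $(a_1,\ldots,a_n,d)\in{\cal P}_{S\cup\{d\}}$ maps under $f$ to a cyclic structure in which $0$ sits immediately to the left of $1$; that is, to a $01$ cell-form. Conversely, every $01$ cyclic structure on $\{0,t_1,\ldots,t_{n-2},1,\infty\}$ can be rotated so that $\infty$ occupies the final position, and is therefore the image under $f$ of some $1n$-word. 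Consequently $f(W_S)$ is precisely the span of the $01$ cell-forms, and by Theorem \ref{thm01cellsspan} this equals $H^{n-2}(\Mod_{0,n+1})$. Since $\dim W_S=(n-1)!$ (there are $n-1$ positions for the block $1n$ and $(n-2)!$ orderings of the remaining letters) and by Theorem \ref{thm01cellsspan} $\dim H^{n-2}(\Mod_{0,n+1})=(n-1)!$ as well, a surjective linear map between spaces of the same finite dimension is automatically an isomorphism.

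A more structural route, which is the one suggested by the remark preceding the corollary, is to invoke Proposition \ref{prop41} to identify $\ker f=I_S$ and then verify the direct sum decomposition $V_S=W_S\oplus I_S$. The sum equals $V_S$ by a dimension count, using $\dim I_S=n!-(n-1)!$ from Radford's theorem, so the content reduces to $W_S\cap I_S=0$. This is exactly the content of Lemma \ref{usefullemma} applied to the pair $(a_1,a_2)=(1,n)$: any element of $W_S$ is by definition a linear combination of words having the character $1$ just to the left of the character $n$, and such a combination can lie in $I_S$ only if all its coefficients vanish. There is no real obstacle here; everything has been prepared by the earlier sections, and the main conceptual point is simply the translation between $1n$-words and $01$ cell-forms.
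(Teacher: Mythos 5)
Your proposal is correct, and your second route is exactly the paper's own argument: the text's proof is the single observation that ${\cal P}_{S\cup\{d\}}=W_S\oplus I_S$, which comes from Proposition \ref{prop41} ($f$ surjective with kernel $I_S$) together with Lemma \ref{usefullemma} applied to the pair $(1,n)$ and the Radford dimension count. Your first, more direct route (matching $1n$-words with the $01$ cell-forms of Theorem \ref{thm01cellsspan} and counting dimensions) is essentially the same content, since that matching is precisely how surjectivity is established in the proof of Proposition \ref{prop41}.
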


\begin{proof} The proof follows from the fact that ${\cal P}_{S\cup \{d\}}=
W_S\oplus I_S$.
\end{proof}

\subsection{Generators for ${\cal FC}$}

By definition, ${\cal FC}$ is generated by all linear combinations
of pairs of polygons $\sum_i a_i (\delta,\omega_i)$ whose associated
differential form converges on the standard cell, but modulo the
relation (among others) that shuffles are equal to zero.  In other
words, since ${\cal P}_{S\cup \{d\}}=W_S\oplus I_S$, we can redefine
${\cal FC}$ to be generated by linear combinations $\sum_i
a_i(\delta,\omega_i)$ such that $\sum_i a_i\omega_i\in W_S$ and such
that the associated differential form converges on the standard
cell.

The following proposition states that the notion of the residue of a
polygon and the residue of the corresponding cell-form coincide. In
order to state it, we must recall that one can define the map
$$\rho: \cal{P}_S \To \Omega^\ell(\Mod_{0,S})\ ,$$
from polygons labelled by $S$ to cell forms in a coordinate-free way
(one can do this directly from equation $(\ref{omegalift})$). In
$\S1$, this map was defined in explicit coordinates by fixing any
three marked points at $0,1$ and $\infty$. This essence of lemma
$\ref{lemsymaction}$ is that $\rho$ is independent of the choice of
three marked points, and is thus  coordinate-free.

\begin{prop} \label{propresformula}  Let $S=\{1,\ldots,n\}$ and let
$D$ be a stable partition $S_1\cup S_2$ of $S$ corresponding to a boundary
divisor of $\Mod_{0,n}$, with $|S_1|=r$ and $|S_2|=s$.  Let $\rho$ denote
the usual map from polygons to cell-forms.  Then the following
diagram is commutative:
$$\xymatrix{{\cal P}_S\ar[r]^\rho\ar[d]_{\Res^p_D} &H^\ell(\Mod_{0,n})
\ar[d]^{\Res_D}\\
{\cal P}_{S_1\cup \{d\}}\otimes  {\cal P}_{S_2\cup \{d\}}\ar[r]^{\!\!\!
\!\!\!\!\!\!\!\!\!\!\!\!\!\!\!\!\!\!\rho \otimes\rho}&\
H^{r-2}(\Mod_{0,r+1})\otimes H^{s-2}(\Mod_{0,s+1}).  }$$
In other words, the usual residue of differential forms
corresponds to the combinatorial residue of polygons.
\end{prop}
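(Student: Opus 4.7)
By $\Q$-linearity of $\rho$, $\Res^p_D$, and $\Res_D$, it suffices to prove commutativity of the diagram on a single polygon $\eta \in {\cal P}_S$, and we split into two subcases depending on whether $D$ is a chord of $\eta$. If $D \notin \chi(\eta)$, then $\Res^p_D(\eta) = 0$ by definition, so the lower composite vanishes. On the other hand, Proposition \ref{CORomegapoles} identifies the singular locus of $\omega_\eta = \rho(\eta)$ as the union of boundary divisors of the cell $X_{S,\eta}$, i.e.\ the divisors indexed by chords of $\eta$; since $D$ is not such a chord, $\omega_\eta$ is regular along $D$, so $\Res_D(\omega_\eta) = 0$ as well, and both composites agree.

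Suppose now $D \in \chi(\eta)$, with blocks $S_1, S_2$ of cardinalities $r$ and $s$. Writing the cyclic order $\eta = (a_1, \ldots, a_r, b_1, \ldots, b_s)$ with $\{a_i\} = S_1$ and $\{b_j\} = S_2$, we have $\eta_1 = (a_1, \ldots, a_r, d)$ and $\eta_2 = (b_1, \ldots, b_s, d)$, and what must be shown is that $\Res_D(\omega_\eta) = \omega_{\eta_1} \otimes \omega_{\eta_2}$. Since the cell-form map $\rho$ is $\PSL_2$-invariant and hence coordinate-free by Lemma \ref{lemsymaction}, we may choose any local chart adapted to $D$. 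The plan is to fix three marked points indexed by $S_2$ at $0, 1, \infty$ and introduce a local parameter $\epsilon$ for $D$ by writing the $S_1$-coordinates as $v_{a_i} = p + \epsilon u_i$, where $p$ is the point on the $S_2$-component to which the $v_{a_i}$ collapse, and the $u_i$, after an auxiliary $\PSL_2$ gauge fixing that places three of the points $u_1, \ldots, u_r, d$ at $0, 1, \infty$, give coordinates on $\Mod_{0,r+1}$. Then $D = \{\epsilon = 0\}$ and this chart realizes the canonical product decomposition $D \simeq \Mod_{0,r+1} \times \Mod_{0,s+1}$.

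Substituting into the cell-function form of $\omega_\eta$, each interior $S_1$-difference $v_{a_{i+1}} - v_{a_i} = \epsilon(u_{i+1} - u_i)$ contributes one power of $\epsilon$ in the denominator, whereas the two crossing differences $v_{b_1} - v_{a_r}$ and $v_{a_1} - v_{b_s}$ are regular at $\epsilon = 0$ and specialize to $v_{b_1} - p$ and $p - v_{b_s}$, which are precisely the denominator factors of $\cfl \eta_2 \cfr$ involving the chord-edge $d$ placed at $p$; the remaining $S_2$-differences contribute the other factors of $\cfl \eta_2 \cfr$ verbatim. The pullback of the top $S_1$-differentials produces a compensating $\epsilon^{r-1}\, d\epsilon$ factor, so the $\epsilon$-powers collectively balance to a single simple pole $d\epsilon/\epsilon$ whose coefficient rearranges into $\omega_{\eta_1} \wedge \omega_{\eta_2}$; extracting this residue yields the required identity. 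The main obstacle is the $\epsilon$-bookkeeping in this last computation: verifying that the $\epsilon$-powers in numerator and denominator cancel exactly to a simple pole, and that the regular limits of the crossing differences reassemble into the correct cell-functions of $\eta_1$ and $\eta_2$ with $d$ interpreted as the collision point $p$. This is the geometric counterpart of the factorization in Proposition \ref{shufprod}, and can equivalently be extracted from Proposition 7.5 of \cite{Br2} applied to the $\epsilon$-expansion of $\omega_\eta$ near $D$.
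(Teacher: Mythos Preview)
Your overall strategy is sound and is a legitimate alternative to the paper's argument, but there is a bookkeeping slip in the $\epsilon$-count that makes the sketch internally inconsistent. You claim the denominator contributes $\epsilon^{-(r-1)}$ (correct: there are $r-1$ interior $S_1$-differences) and that the $S_1$-differentials contribute a factor $\epsilon^{r-1}\,d\epsilon$; but those two powers would cancel completely, leaving no pole at all, contradicting your own conclusion. In fact, once you gauge-fix $u_d=\infty$ and two of the $u_i$ at $0$ and $1$, only $r-2$ of the $u_i$ remain free, and the Jacobian of $(v_{a_1},\ldots,v_{a_r})\mapsto(p,\epsilon,u_{\text{free}})$ is $\epsilon^{r-2}$, not $\epsilon^{r-1}$. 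The extra differential you have not accounted for is $dp$: it is not lost, but becomes the differential of the node-coordinate $v_d$ on the $\Mod_{0,s+1}$ factor, and is precisely what is needed to complete $\omega_{\eta_2}$ (the two crossing differences you identified specialize to $(v_{b_1}-p)$ and $(p-v_{b_s})$, i.e.\ the two denominator factors of $\cfl\eta_2\cfr$ adjacent to $d$). With this correction your argument goes through.

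The paper's proof sidesteps this delicate Jacobian entirely by a different device. It first uses $\Sym(n)$-equivariance (Lemma~\ref{lemsymaction}) to reduce to the single case where $\eta=\delta$ is the standard polygon and $D$ is one of its chords. For that case it passes to cubical coordinates $t_i=x_i\cdots x_\ell$, in which $\omega_\delta$ factorises completely as $\prod_i dx_i/\bigl(x_i(1-x_i)\bigr)$ and the divisor $D$ is simply $\{x_k=0\}$; the residue is then read off by inspection with no $\epsilon$-bookkeeping, and a second coordinate change identifies the two factors with the standard cell-forms on $\Mod_{0,r+1}$ and $\Mod_{0,s+1}$. Your collision-coordinate approach is more direct in that it handles an arbitrary $\eta$ at once, but pays for this generality with the Jacobian computation; the paper trades that computation for the equivariance reduction.
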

\begin{proof}
 Let $\eta\in {\cal P}_S$ be a polygon, and let $\omega_\eta$
be the associated cell-form.  If $D$ is not compatible with
$\omega_\eta$, then $\omega_\eta$ has no pole on $D$ by proposition
\ref{CORomegapoles}, so $\Res_D(\omega)=0$.

  We shall
work in explicit coordinates, bearing in mind that this does not
affect the answer, by the remarks above.
%
 Therefore assume that $\eta$ is the polygon
numbered with the standard cyclic order on $\{1,\ldots,n\}$, and
that $D$ is compatible with $\eta$.  The corresponding cell-form is
given in simplicial coordinates by $[0,t_1,\ldots,t_\ell,1,\infty]$.
By applying a cyclic rotation, we can assume that $D$ corresponds to
the partition
$$S_1 = \{1,2,3,\ldots, k+1\} \ \hbox{ and } \ \
S_2=\{k+2,\ldots, n-1, n\}$$ for some $1\leq k\leq \ell$.
 In simplicial coordinates, $D$ corresponds to the  blow-up of the
cycle $0=t_1=\cdots=t_k$. We compute the residue of $\omega_\eta$
along $D$ by applying the variable change $t_1  = x_1\ldots x_\ell,
\ldots,  t_{\ell-1} = x_{\ell-1}x_\ell,  t_\ell = x_\ell$ to the
form $\omega_\eta=[0,t_1,\ldots,t_\ell,1,\infty]$. The standard cell
$X_\eta$ is given by $\{0<x_1,\ldots,x_\ell<1\}$. In these
coordinates, the divisor $D$ is given by $\{x_k=0\}$, and the form
$\omega_\eta$ becomes
\begin{equation} \label{omegacubicalform}
\omega_{\eta} = {dx_1\ldots dx_\ell \over x_1(1-x_1)\ldots
x_\ell(1-x_\ell)}.  \end{equation} The residue of $\omega_\eta$
along $x_k=0$ is given by
\begin{equation}\label{proofres}
{dx_1\ldots dx_{k-1} \over x_1(1-x_1)\ldots x_{k-1}(1-x_{k-1}) }
\otimes {dx_{k+1}\ldots dx_{\ell} \over x_{k+1}(1-x_{k+1})\ldots
x_{\ell}(1-x_{\ell})}\ .\end{equation} Changing back to simplicial
coordinates via
  $x_1=a_1/a_2,\ldots,x_{k-2}=a_{k-2}/a_{k-1}$,
$x_{k-1}=a_{k-1}$,  and $x_\ell=b_\ell$,
$x_{\ell-1}=b_{\ell-1}/b_\ell, \ldots,x_{k+1}=b_k/b_{k+1}$ defines
simplicial coordinates on $D\cong \Mod_{0,r+1}\times \Mod_{0,s+1}$.
The standard cells induced by $\eta$ are
$(0,a_1,\ldots,a_{k-1},1,\infty)$ on $\Mod_{0,r+1}$ and
$(0,b_k,\ldots, b_{\ell},1,\infty)$ on $\Mod_{0,s+1}$. If we compute
$(\ref{proofres})$ in these new coordinates, it gives precisely
$$ [0,a_1,\ldots,a_{k-1},1,\infty]\otimes [0,b_k,\ldots,b_\ell,1,\infty]\ ,$$
which is the tensor product of the cell forms corresponding to the
standard cyclic orders $\eta_1,\eta_2$ on $S_1\cup\{d\}$ and
$S_2\cup\{d\}$ induced by $\eta$. Therefore $\rho(\Res^p_D
\eta)=\Res_D \omega_\eta$.

To conclude the proof of the proposition, it is enough to notice
that applying $\sigma\in \Sym(n)$ to the formula
$\hbox{Res}_D\omega_\eta=\omega_{\eta_1}\otimes \omega_{\eta_2}$
yields
$$\hbox{Res}_{\sigma(D)}\sigma^*(\omega_\eta)=
\hbox{Res}_{\sigma(D)}\omega_{\sigma(\eta)}=\sigma^*(\omega_{\eta_1})
\otimes \sigma^*(\omega_{\eta_2})=\omega_{\sigma(\eta_1)}\otimes
\omega_{\sigma(\eta_2)}.$$
Here, 
$\sigma(\eta_i)$ is the  cyclic order induced by $\sigma(\eta)$ on
the set $\sigma(S_1)\cup \{\sigma(d)\}$, where $\sigma(d)$
corresponds to the partition $S=\sigma(S_1)\cup \sigma(S_2)$. Thus
$\rho(\Res^p_{\sigma(D)} \sigma(\eta))=\Res_{\sigma(D)}
\omega_{\sigma(\eta)}$ for all $\sigma \in \Sym(n)$, which proves
that $\rho(\Res^p_D \gamma)=\Res_D \omega_\gamma$ for all cyclic
structures $\gamma\in\cal{P}_S$, and all divisors $D$.
\end{proof}

\begin{cor} A linear combination $\eta=\sum_i a_i\eta_i\in W_S
\subset {\cal P}_{S\cup \{d\}}$ converges with respect to the standard polygon
if and only if its associated form $\omega_\eta$ converges on the standard cell.
\end{cor}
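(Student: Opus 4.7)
The plan is to combine Proposition \ref{propresformula}, which states that the residue of cell-forms along a divisor $D$ corresponds under $\rho$ to the combinatorial residue of polygons along the matching chord $E$, with Proposition \ref{prop41}, whose content (together with the remark following it) is that $\ker\rho = I$, the subspace of shuffles with respect to any single label. Since $\omega_\eta$ is logarithmic, convergence on the closure of the standard cell is equivalent to the vanishing of $\Res_D(\omega_\eta)$ for every boundary divisor $D$ of the cell, and these divisors correspond bijectively to the bad chords $E$ (partitions $S = T\sqcup (S\setminus T)$ in which $T\subsetneq S$ is a consecutive block for the standard ordering). The forward direction is immediate: polygon convergence means $\Res^p_E(\eta)\in I_T\otimes {\cal P}_{S\setminus T\cup\{d,e\}}$ for every bad chord $E$, and since $\rho$ annihilates $I_T$, applying $\rho\otimes\rho$ gives $\Res_D(\omega_\eta)=0$ by Proposition \ref{propresformula}.

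For the converse, suppose $\omega_\eta$ converges. Then $\Res^p_E(\eta)$ lies in $\ker(\rho\otimes\rho) = I_T\otimes V_2 + V_1\otimes I'$ for every bad chord $E$, writing $V_1 = {\cal P}_{T\cup\{e\}}$, $V_2 = {\cal P}_{S\setminus T\cup\{d,e\}}$, and $I_T$, $I'$ for the respective kernels of $\rho$ on $V_1$ and $V_2$. We must eliminate the $V_1\otimes I'$ summand, and here the hypothesis $\eta\in W_S$ (each $\eta_i$ is a $1n$-polygon) is essential. Since $T$ is a proper consecutive subset of $\{1,\ldots,n\}$, it cannot contain both $1$ and $n$, so we split into two cases. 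In the interior case $T\subseteq \{2,\ldots,n-1\}$, the edges $1$ and $n$ both remain in $S\setminus T$ and keep their adjacency after the cut, so $\Res^p_E(\eta)\in V_1\otimes U_2$, where $U_2\subset V_2$ is the subspace of polygons with $1$ adjacent to $n$. Lemma \ref{usefullemma} gives $U_2\cap I' = 0$, and a dimension count shows $V_2 = U_2\oplus I'$; a direct linear-algebra computation then yields $(V_1\otimes U_2)\cap \ker(\rho\otimes\rho) = I_T\otimes U_2 \subseteq I_T\otimes V_2$, as required. In the endpoint case where $T$ contains $1$ but not $n$ (the case $n\in T$ is symmetric), each contributing $\eta_i$ must place the block $T$ immediately against $n$, so after cutting the left factor lies in the subspace $U_1\subset V_1$ of polygons with $1$ adjacent to $e$, while the right factor lies in the subspace $U_2'\subset V_2$ of polygons with $e$ adjacent to $n$. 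Lemma \ref{usefullemma} gives both $U_1\cap I_T = 0$ and $U_2'\cap I' = 0$, so $(U_1\otimes U_2')\cap \ker(\rho\otimes\rho) = 0$ and $\Res^p_E(\eta)=0$, which trivially lies in $I_T\otimes V_2$.

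The delicate point will be the book-keeping of which adjacencies in the residue tensor are forced by the $1n$-hypothesis on $\eta$, and matching them against the correct instance of Lemma \ref{usefullemma} in each case. Once the adjacency subspaces $U_1, U_2, U_2'$ are identified and their trivial intersections with the relevant shuffle kernels are confirmed, the elementary tensor decomposition argument completes the proof.
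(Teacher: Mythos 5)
Your proposal is correct and follows essentially the same route as the paper: the residue compatibility of proposition \ref{propresformula} together with the kernel description from proposition \ref{prop41} reduce the statement to eliminating the ${\cal P}\otimes I$ summand of $\ker(\rho\otimes\rho)$, which you do via lemma \ref{usefullemma} exactly as in the text (using that the right-hand factors keep $1$ adjacent to $n$ when the chord misses $\{1,n\}$). Your explicit tensor-decomposition treatment of the chords meeting $\{1,n\}$, forcing the residue to vanish through the adjacencies imposed on both factors, is just a spelled-out version of the paper's appeal to the argument of lemma \ref{lemleft} (including the same easy ``fixed rightmost letter'' adaptation of lemma \ref{usefullemma}), so the two proofs coincide in substance.
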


\begin{proof}
We first show that \begin{equation}\label{rescond}\Res^p_D(\eta)\in
I_{S_1}\otimes {\cal P}_{S_2\cup \{d\}}+ {\cal P}_{S_1\cup
\{d\}}\otimes I_{S_2}\ ,\end{equation} if and only if $\omega_\eta$
converges along the corresponding divisor $D$ in the boundary of the
standard cell. If $(\ref{rescond})$ holds,
then by proposition \ref{prop41} together with the previous
proposition, $\Res_D(\omega_\eta)=0$. Conversely, if
$\Res_D(\omega_\eta)=0$ for a divisor $D$ in the boundary of the
standard cell, then by the previous proposition, $\Res^p_D(\eta)\in$
Ker$(\rho\otimes\rho)$, which is exactly equal to $I_{S_1}\otimes
{\cal P}_{S_2\cup \{d\}}+ {\cal P}_{S_1\cup \{d\}}\otimes I_{S_2}$.

We now show that $(\ref{rescond})$ is equivalent to the convergence
of $\eta$. But since $\eta\in W_S$, the argument of lemma
$\ref{lemleft}$ implies that $(\ref{rescond})$ holds automatically
for any $D$ which intersects $\{1,n\}$ non-trivially. If $D$
intersects $\{1,n\}$ trivially, then we can assume that $\{1,n\}
\subset S_2$. In that case, the fact that $W_{S_2}\cap I_{S_2}=0$
(lemma \ref{usefullemma}) implies that $(\ref{rescond})$ is
equivalent to the apparently stronger condition
$$\Res^p_D(\eta)\in
I_{S_1}\otimes {\cal P}_{S_2\cup \{d\}}\ ,$$ and thus $\eta$
converges along $S_1$ in the sense of definition (\ref{conv}). This
holds for all divisors $D$ and thus completes the proof of the
corollary.
\end{proof}

\begin{cor} The Lyndon insertion words of ${\cal W}_S$ form a generating
set for ${\cal FC}$.  Furthermore, ${\cal FC}$ is defined by subjecting
this generating set to only two sets of relations (cf. definition \ref{threerels})
\begin{itemize}
\item{dihedral relations}
\item{product map relations}
\end{itemize}
\end{cor}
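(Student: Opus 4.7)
The strategy is to combine the splitting $V_S = W_S \oplus I_S$ from proposition \ref{prop41} with the convergence characterization in theorem \ref{convKS} and the previous corollary, which identifies convergence of an element $\eta \in W_S$ as a polygon with convergence of the associated cell-form on the standard cell.

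For the generation statement, I would take any standard pair $(\delta, \omega)$ whose associated form converges on the standard cell, lift $\omega$ to a combination in ${\cal P}_{S\cup\{d\}}$, and split it along the direct sum decomposition as $\omega = \omega_W + \omega_I$ with $\omega_W \in W_S$ and $\omega_I \in I_S$. The third family of relations in definition \ref{threerels} kills $(\delta, \omega_I)$ in ${\cal FC}$, so $(\delta, \omega) = (\delta, \omega_W)$. Since $\omega_I$ maps to zero as a cell-form by formula (\ref{cellfunction1shuffsare0}), $\omega_W$ also represents the original form and is therefore convergent on the standard cell. The previous corollary then says that $\omega_W$ converges in the polygon sense with respect to $\delta$, and theorem \ref{convKS} gives $\omega_W \in K_S = \langle {\cal W}_S\rangle$. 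Hence every generator of ${\cal FC}$ reduces, modulo the shuffle relation, to a linear combination of Lyndon insertion words.

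For the second assertion, I would argue that the shuffle-with-respect-to-one-element relation is entirely absorbed by the choice of the generating set: by lemma \ref{usefullemma} we have $W_S \cap I_S = 0$, and by theorem \ref{linind} the elements of ${\cal W}_S$ are linearly independent, so no nontrivial shuffle relation can hold among them. The shuffle relation is used exactly once above, namely to perform the projection $V_S \twoheadrightarrow W_S$ when passing from an arbitrary convergent form to its canonical Lyndon insertion expansion. To present ${\cal FC}$ on the generating set ${\cal W}_S$ subject only to dihedral and product map relations, one must then project the outputs of these two families (which a priori live in all of ${\cal P}_{S\cup\{d\}}$) onto $W_S$ along the direct sum decomposition.

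The main obstacle will be verifying that this projection onto $W_S$ is compatible with both the dihedral and product map operations, i.e.\ that projecting the relations does not lose information and does not introduce spurious identifications. Concretely, one needs to check that the $I_S$-component of any product-map or dihedral output is carried to zero on the cell-form side, which follows from proposition \ref{prop41} together with the corollary to proposition \ref{shufprod}. Once this compatibility is established, the projected dihedral and product map relations on ${\cal W}_S$ generate exactly the same equivalence classes in ${\cal FC}$ as the original three-family presentation, completing the proof of the corollary.
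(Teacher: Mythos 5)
Your proposal is correct and follows essentially the same route as the paper: split along $V_S = W_S \oplus I_S$ (killing the $I_S$-part by the shuffle relation, which is invisible on the cell-form side since $I_S$ is the kernel of the polygon-to-form map), then invoke the corollary identifying form-convergence on the standard cell with polygon-convergence for elements of $W_S$, and theorem \ref{convKS} to land in $K_S=\langle{\cal W}_S\rangle$; the paper likewise disposes of the third family of relations by noting that shuffles simply do not occur once attention is restricted to ${\cal W}_S$. Your additional observation that the dihedral and product map outputs must be projected back onto $W_S$, and that this is harmless because the $I_S$-component dies under the map of proposition \ref{prop41}, is exactly the point the paper's remark leaves implicit.
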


\begin{rem}The third relation from definition \ref{threerels} is not needed
because we have restricted attention from all linear combinations of pairs
of polygons to only those in the basis ${\cal W}_S$, where such shuffles
do not occur. 
\end{rem}

\vspace{.3cm}
\subsection{The insertion basis for $H^\ell(\Mod_{0,n}^\delta)$}\label{sec43}

\begin{defn}\label{definsforms} Let an {\it insertion form} be the sum of $01$-cell forms
obtained by renumbering the Lyndon insertion words of ${\cal W}_S$ via
$(1,\ldots,n,d)\rightarrow (0,t_1,\ldots,t_{\ell+1},1,\infty)$.
\end{defn}

\begin{thm}\label{insformsspan} The insertion forms form a basis for
$H^{n-2}(\Mod_{0,n+1}^\delta)$.
\end{thm}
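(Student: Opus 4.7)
The proof is essentially a synthesis of the previous results, assembling the combinatorial and geometric threads developed in the paper so far. The strategy is to transport the combinatorial basis statement for convergent polygons (Theorems \ref{convKS} and \ref{linind}) across the isomorphism between polygons and cohomology (Corollary to Proposition \ref{prop41}), using Proposition \ref{propresformula} to match convergence on both sides.

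First I would observe that by the Corollary following Proposition \ref{prop41}, the cell-form map $\rho$ restricts to an isomorphism $W_S \xrightarrow{\sim} H^{n-2}(\Mod_{0,n+1})$, where $W_S$ is the subspace of ${\cal P}_{S\cup\{d\}}$ spanned by $1n$-polygons. Under this identification, insertion forms are by definition the images of the Lyndon insertion words ${\cal W}_S \subset W_S$ after the relabeling $(1,\ldots,n,d)\mapsto(0,t_1,\ldots,t_{\ell+1},1,\infty)$.

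Next, I would argue that $\rho$ restricts to an isomorphism from the convergent subspace $K_S = \langle {\cal W}_S\rangle$ of $W_S$ onto $H^{n-2}(\Mod_{0,n+1}^\delta)$. This is precisely the content of the Corollary to Proposition \ref{propresformula}: for $\eta \in W_S$, polygon convergence (in the sense of Definition \ref{convergence}) is equivalent to convergence of $\omega_\eta = \rho(\eta)$ on the closure of the standard cell. Thus $\rho$ identifies the convergent part of $W_S$ with the convergent cohomology $H^{n-2}(\Mod_{0,n+1}^\delta)$.

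Finally, I would invoke Theorem \ref{convKS}, which states exactly that $K_S = \langle {\cal W}_S\rangle$ is the set of convergent elements of $W_S$, together with the linear independence of ${\cal W}_S$ established in Theorem \ref{linind}. These together say that ${\cal W}_S$ is a basis of $K_S$. Applying the isomorphism $\rho$, the insertion forms therefore form a basis of $H^{n-2}(\Mod_{0,n+1}^\delta)$. There is no serious obstacle at this stage; the main difficulty has already been absorbed into the inductive residue argument of Theorem \ref{convKS} and the geometric residue-matching of Proposition \ref{propresformula}. The only point that deserves explicit verification is that the restriction of the $W_S$-isomorphism to the convergent subspace really lands in, and surjects onto, $H^{n-2}(\Mod_{0,n+1}^\delta)$, which follows immediately from the compatibility of combinatorial and geometric residues along each boundary divisor.
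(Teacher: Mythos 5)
Your proposal is correct and follows exactly the route the paper intends: its own proof is the one-line remark that the theorem is an immediate corollary of the preceding results, and the chain you assemble (the isomorphism $W_S\simeq H^{n-2}(\Mod_{0,n+1})$ from the corollary to Proposition \ref{prop41}, the equivalence of polygon convergence and convergence of the associated form from the corollary to Proposition \ref{propresformula}, and Theorems \ref{convKS} and \ref{linind} identifying ${\cal W}_S$ as a basis of the convergent part of $W_S$) is precisely that intended argument, spelled out.
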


This is an immediate corollary of all the preceding results.

It is interesting to attempt to determine the dimension of the
spaces $H^\ell(\Mod_{0,n}^\delta)$.  The most important numbers
needed to compute these are the numbers $c_0(n)$ of special
convergent words (convergent 01 cell-forms) on $\Mod_{0,n}$.  
These can be computed by counting the number of polygons indexed by
symbols $(0,t_1,\ldots,t_\ell,1,\infty)$ (or $(1,\ldots,n)$)
which are convergent with respect to the standard cyclic order and
also have the index $0$ next to $1$ (or $1$ next to $n-1$); in other words, 
the number of cyclic orders having $0$ next to $1$ and in which no
$k$ consecutive labels occur as a single block of $k$ consecutive elements
of the cyclic order.  By direct counting, we
find $c_0(4)=0$, $c_0(5)=1$, $c_0(6)=2$, $c_0(7)=11$, $c_0(8)=64$,
$c_0(9)=461$.

\begin{prop}
Set $I_1=1$, and let $I_r$ denote the cardinal of the set
${\cal L}_{\{1,\ldots,r\}}$ for $r\ge 2$ given in definition \ref{defngrr}.  
The dimensions $d_n=$dim$\,H^\ell(\Mod_{0,n}^\delta)$ are given by
\begin{equation}\label{diminsertionformula}
d_n = \sum^n_{r=5} \ \ \ \sum_{i_1+\cdots + i_{r-3}= n-3} I_{i_1}\ldots
I_{i_r} c_0(r)\ , \end{equation} where the inner sum is over all
partitions of $(n-3)$ into $(r-3)$ strictly positive integers. This formula
can be written as follows in terms of generating series.  Let
$I(x)=\sum_{n=1}^\infty I_nx^n=x+x^2+2x^3+7x^4+\cdots$, and let
$C(x)=\sum_{r=5}^\infty c_0(r)x^{r-3} = x^2+2x^3+11x^4+64x^5+\cdots$.
Then if $D(x)=\sum_{n=5}^\infty d_nx^{n-3}$, we have the identity
$$D(x)=C\big(I(x)\bigr).$$
\end{prop}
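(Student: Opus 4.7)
The plan is to exhibit $d_n$ as a pure count of the set ${\cal W}_S$ for $S = \{1,\ldots,n-1\}$, and then reorganize that count along the lines of the recursive construction in definition \ref{insw}. By theorem \ref{insformsspan}, together with the renumbering of definition \ref{definsforms}, the Lyndon insertion words form a basis of $H^\ell(\Mod_{0,n}^\delta)$, so $d_n = |{\cal W}_{\{1,\ldots,n-1\}}|$ on the nose.

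First, I would unpack definition \ref{insw}: every element of ${\cal W}_S$ is produced from (i) a \emph{framing} $w'$, which is a special convergent $1k$-word on an alphabet $T$ of size $k$ for some $4 \le k \le n-1$; (ii) a tuple $(v_1,\ldots,v_k)$ of positive integers with $v_1 = v_k = 1$ and $v_1 + \cdots + v_k = n-1$; and (iii) for each $i$ with $v_i > 1$, a choice of $V_i \in {\cal L}_{D_i}$ on an alphabet $D_i$ of size $v_i$, using the convention $I_1 = 1$ for trivially-inserted positions. The critical combinatorial point I would invoke is that this data is \emph{uniquely} recovered from the resulting insertion word: one contracts each maximal consecutive block of $w$ to a single letter, exactly as in the opening steps of the proof of theorem \ref{linind}, which recovers the framing together with its fixed structure. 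This identifies ${\cal W}_S$ with the disjoint union, over framings and compositions, of products of the sets ${\cal L}_{D_i}$.

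Next, I would convert this identification into the displayed sum. By the definition of $c_0$, the number of special convergent $1k$-words on $k$ letters equals the number of convergent $01$ cell-forms on $\Mod_{0,k+1}$, namely $c_0(k+1)$; and by the definition of $I_{v_i}$, the number of choices for each $V_i$ is $I_{v_i}$. For a fixed framing length $k$, the constraints $v_1 = v_k = 1$ and $\sum_i v_i = n-1$ collapse to a composition $v_2 + \cdots + v_{k-1} = n-3$ into $k-2$ positive parts. Substituting $r = k+1$ and relabeling $(v_2,\ldots,v_{r-2}) = (i_1,\ldots,i_{r-3})$ produces exactly the sum displayed in \eqref{diminsertionformula}.

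The generating-function form then falls out as pure bookkeeping: the coefficient of $x^{n-3}$ in $I(x)^{r-3}$ is precisely the inner sum $\sum_{i_1+\cdots+i_{r-3}=n-3} I_{i_1}\cdots I_{i_{r-3}}$, so the coefficient of $x^{n-3}$ in $C(I(x)) = \sum_{r \ge 5} c_0(r) I(x)^{r-3}$ equals $d_n$, yielding $D(x) = C(I(x))$. The only step of real substance is the uniqueness of the framing, which is already baked into the linear-independence argument of theorem \ref{linind}; everything else is a change of summation variables.
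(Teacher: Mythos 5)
Your proposal is correct and follows essentially the same route as the paper, whose proof is simply the one-line observation that the formula counts all ways of making insertions into the $c_0(r)$ convergent $01$ cell-forms for $5\le r\le n$; you have merely made explicit the bookkeeping (framing of size $k=r-1$, composition $v_2+\cdots+v_{k-1}=n-3$, factors $I_{v_i}$) and the fact that the fixed structure is recoverable, which the paper leaves implicit via theorem \ref{linind}. Nothing further is needed.
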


\begin{proof} This recursive counting formula is a direct consequence
of the definition, counting all possible ways of making
insertions into the $c_0(r)$ convergent $01$-cell forms for $5\le r\le n$.
\end{proof}
\begin{rem}
We have $I_1=I_2=1$, $I_3=2$, $I_4=7$, $I_5=34$, $I_6=206$ (see example
\ref{examplegrr}).  The formula gives
\begin{equation*}
\begin{cases}
d_5=I_1^2c_0(5)=1\ ,\\
d_6=I_1I_2c_0(5)+I_2I_1c_0(5)+I_1^3c_0(6)=1+1+2=4\ ,\\
d_7=I_1I_3c_0(5)+I_2^2c_0(5)+I_3I_1c_0(5)+I_1^2I_2c_0(6)+I_1I_2I_1c_0(6)
+I_2I_1^2c_0(6)+c_0(7)\\
\ \ =5c_0(5)+3c_0(6)+c_0(7)=5+6+11=22\ .
\end{cases}
\end{equation*}
The authors thank Don Zagier for the restatement of formula 
(\ref{diminsertionformula}) in terms of generating series.  
In the forthcoming preprint \cite{BB}, the following remarkably
simple identity concerning the $d_n$ is proven.
Let $E(x)=x-x^2-\sum_{n=4}^\infty d_nx^{n-1}$, and set
$F(x)=\sum_{n=1}^\infty (n-1)! x^n$.  Then
$$E\bigl(F(x)\bigr)=x,$$
in other words $E(x)$ is the formal inversion of the power series $F(x)$. 

While the present paper was in the final stages of correction, 
a preprint \cite{ST} appeared in which a sequence of numbers $d_n$,
of which the first ones are equal to the $d_n$ defined above, are
discovered and interpreted in terms of free Lie operads.
In this paper, the authors give the same expression for the generating
series of their $d_n$ as the inverse of $F(x)$, thus their result
provides a new interpretation of the dimensions $d_n$.

Note that the formula (\ref{diminsertionformula}) gives the dimensions 
as sums of positive terms. A very different  formula for 
dim$\,H^\ell(\Mod_{0,n}^\delta)$ is given in \cite{BB} using point-counting 
methods.  The relations between the proof in \cite{ST}, the geometry of moduli 
spaces, the intermediate power series $I(x)$ and $C(x)$, and 
the counting method in \cite{BB}, will be discussed in
a forthcoming paper.

\end{rem}

\vspace{.3cm}
\subsection{The insertion basis for $\Mod_{0,n}$, $5\le n\le 9$}
 \label{calculations} In this section we list the insertion bases
in low weights.  In the case $\Mod_{0,5}$, there is a single
convergent cell form: \begin{equation}\label{insbasis5}
\omega=[0,1,t_1,\infty,t_2].
\end{equation}
The corresponding period integral is the cell-zeta value:
$$\zeta(\omega) = \int_{(0,t_1,t_2,1,\infty)} [0,1,t_1,\infty,t_2] =
\int_{0\leq t_1\leq t_2\leq 1}  {dt_1 dt_2 \over (1-t_1) t_2 } =
\zeta(2)\ .$$ Here we use the notation of round brackets for cells
in the moduli space $\Mod_{0,n}$ introduced in section
\ref{prodmaps}: the cell $(0,t_1,t_2,1,\infty)$ is the same as the
cell $X_{5,\delta}$ corresponding to the standard dihedral order on
the set $\{0,t_1,t_2,1,\infty\}$. Since $C_0(5)$ is 1-dimensional,
the space of periods in weight $2$, namely the weight 2 graded part
${\cal C}_2$ of the algebra of cell-zeta values ${\cal C}$ of
section \ref{cellalg}, is just the 1-dimensional space spanned by
$\int_{X_{5,\delta}} \omega=\zeta(2)$.

\subsubsection{The case $\Mod_{0,6}$}
The space $C(6)$ is four-dimensional, generated by two
$01$-convergent cell-forms (the first row in the table below) and
two forms (the second row in the table below) which come from
inserting $\cal{L}_{1,2}=\{1\sha2\}$ and $\cal{L}_{2,3}=\{2\sha 3\}$
into the unique convergent $01$ cell form on $\Mod_{0,5}$
(\ref{insbasis5}). The position of the point $\infty$ plays a
special role. It gives rise to another grading, corresponding to the
two columns in the table below, since $\infty$ can only occur in two
positions. \vspace{0.1in}
\begin{center}
\begin{tabular}{|c|c|c|}
  \hline
$C_0(6)$ & $\omega_{1,1}=[0,1,t_2,\infty,t_1, t_3]$ &
$\omega_{1,2}=[0,1,t_1,t_3, \infty, t_2]$ \\
\hline
$C_1(6)$ & $\omega_{2,1}= [0,1,t_1, \infty, t_2\sha t_3]$ &
$\omega_{2,2}=[0,1,t_1\sha t_2,\infty, t_3]$ \\
  \hline
\end{tabular}
\end{center}
\vspace{0.1in} We therefore have four generators in weight 3. There
are no product relations on $\Mod_{0,6}$, so in order to compute the space
of cell-zeta values, we need only compute the action of the dihedral
group on the four differential forms.  In particular, the order 6 cyclic
generator $0\mapsto t_1\mapsto t_2\mapsto t_3\mapsto 1\mapsto \infty\mapsto 0$
sends
$$\omega_{1,1}\mapsto -\omega_{2,1}-\omega_{2,2},\ \
\omega_{1,2}\mapsto \omega_{1,1},\ \ \omega_{2,1}\mapsto
-\omega_{1,2}-\omega_{2,1},\ \ \omega_{2,2}\mapsto \omega_{2,1}.$$
Thus, letting $X$ denote the standard cell $X_{6,\delta}=
(0,t_1,t_2,t_3,1,\infty)$, we have $\int_X \omega_{1,1}=\int_X
\omega_{1,2}$, $\int_X \omega_{2,1} =\int_X \omega_{2,2}$ and
$2\int_X \omega_{2,2}=\int_X \omega_{1,2}$, so in fact the periods
form a single orbit under the action of the cyclic group of order 6
on $H^\ell(\Modf_{0,S})$. We deduce that the space of periods of
weight 3 is of dimension $1$, generated for instance by $\int
\omega_{2,1}$. Since $\omega_{2,1}$ is the standard form for
$\zeta(3)$, we have
$$ \begin{array}{ccccc}
\zeta(0,1,t_2,\infty,t_1, t_3)& = & \displaystyle{\int_{X} {dt_1
dt_2 dt_3 \over (1-t_2)(t_1-t_3)t_3}} & = &2\, \zeta(3)\ , \nonumber
\vspace{0.04in} \\ \vspace{0.04in} \zeta(0,1,t_1,t_3,\infty, t_2) &=
&\displaystyle{\int_{X} {dt_1 dt_2 dt_3 \over
(1-t_1)(t_1-t_3)t_2}} &=& 2\, \zeta(3)\ , \nonumber \\
\zeta(0,1,t_1,\infty, t_2\sha t_3) &=& \displaystyle{\int_{X} {dt_1
dt_2 dt_3 \over
(1-t_1)t_2t_3}} &=  &\zeta(3)\ , \nonumber \vspace{0.04in}\\
\zeta(0,1,t_1\sha t_2,\infty,t_3) &=& \displaystyle{\int_{X} {dt_1
dt_2 dt_3 \over (1-t_1)(1-t_2)t_3}}& = & \zeta(3)\ ,\nonumber
\end{array}
$$
Note that $\omega_{2,2}$ is the  standard form usually associated to
$\zeta(2,1)$, so that we have recovered the well-known identity
$\zeta(2,1)=\zeta(3)$, which is normally obtained using stuffle,
shuffle and Hoffmann relations on multizetas.

\vspace{.3cm}
\subsubsection{The case $\Mod_{0,7}$} The insertion basis is listed in the
following table. It consists of 22 forms, eleven of which lie in
$C_0(7)$, six of which come from making one insertion into a
convergent $01$ cell-form from $C_0(6)$ (using
$\cal{L}_{1,2}=\{1\sha 2\}$ and $\cal{L}_{2,3}=\{2\sha 3\}$), and
five of which come from making two insertions into the unique
convergent $01$ cell-form from $C_0(5)$ (which also uses
$\cal{L}_{1,2,3}=\{1\sha 2\sha3, 2\sha13 \}$ and
$\cal{L}_{2,3,4}=\{2\sha 3\sha4, 3\sha24\}$).
\begin{center}
\begin{tabular}{|c|c|c|c|}
  \hline
$C_0(7)$ & $[0,1,t_2,\infty,t_3, t_1,t_4]$ & $[0,1,t_1,t_3, \infty,
t_2,t_4]$  & $[0,1,t_1,t_4,t_2,\infty,t_3]$ \\
 &$[0,1,t_2,\infty,t_4, t_1,t_3]$ &  $[0,1,t_1,t_3, \infty, t_4,t_2]$&
 $[0,1,t_2,t_4,t_1,\infty,t_3]$ \\
& $[0,1,t_3,\infty,t_1, t_4,t_2]$ &  $[0,1,t_2,t_4, \infty, t_1,t_3]$&
$[0,1,t_3,t_1,t_4,\infty,t_2]$ \\
& &  $[0,1,t_3,t_1, \infty, t_2,t_4]$ & \\
& &  $[0,1,t_3,t_1, \infty, t_4,t_2]$& \\
 \hline
$C_1(7)$ &  $[0,1,t_2, \infty,t_1, t_3\sha t_4]$ &
$[0,1,t_1,t_4,\infty, t_2\sha t_3]$  & $[0,1,t_1\sha t_2,t_4,\infty,t_3]$\\
 &$[0,1,t_3,\infty,t_1\sha t_2,t_4]$ & $[0,1,t_2\sha t_3, \infty,t_1,
 t_4]$ & $[0,1,t_1,t_3\sha t_4, \infty, t_2]$  \\
  \hline
$C_2(7)$ & $[0,1,t_1,\infty, t_3\sha(t_2,t_4)]$ &
$[0,1,t_1\sha t_2,\infty, t_3\sha t_4]$
& $[0,1,t_2\sha(t_1,t_3), \infty, t_4]$ \\
& $[0,1,t_1, \infty, t_2 \sha t_3 \sha t_4]$ & & $[0,1, t_1\sha
t_2\sha t_3,
\infty, t_4]$\\
 \hline
\end{tabular}
\end{center}

\vspace{0.1in} The standard multizeta forms can be decomposed into sums
of insertion forms as follows:
\begin{equation}
\begin{split}
&{{dt_1dt_2dt_3dt_4}\over{(1-t_1)t_2t_3t_4}}\qquad\
\,=[0,1,t_1,\infty,t_2\sha t_3\sha
t_4]\\
&{{dt_1dt_2dt_3dt_4}\over{(1-t_1)(1-t_2)t_3t_4}}=[0,1,t_1\sha t_2,\infty,
t_3\sha t_4]\\
&{{dt_1dt_2dt_3dt_4}\over{(1-t_1)t_2(1-t_3)t_4}}=[0,1,t_1,t_3,\infty,t_2,t_4]+
[0,1,t_1,t_3,\infty,t_4,t_2]+\\
&\qquad\qquad \qquad\qquad \qquad\qquad
[0,1,t_3,t_1,\infty,t_2,t_4]+ [0,1,t_3,t_1,\infty,t_4,t_2]\\
&{{dt_1dt_2dt_3dt_4}\over{(1-t_1)(1-t_2)(1-t_3)t_4}}=[0,1,t_1\sha
t_2\sha t_3, \infty,t_4]
\end{split}
\end{equation}
In general, the standard multizeta form having factors $(1-t_{i_1}),\ldots,
(1-t_{i_r})$ (with $i_1=1$) and $t_{j_1},\ldots,t_{j_s}$ (with $j_s=n$)
in the denominator is equal to the shuffle form:
\begin{equation}\label{Kontstoins}
[0,1,t_{i_1}\sha\cdots\sha t_{i_r},\infty,t_{j_1}\sha \cdots\sha
t_{j_s}],
\end{equation} 
so to decompose it into insertion forms it is simply
necessary to decompose the shuffles $t_{i_1}\sha\cdots\sha t_{i_r}$
and $t_{j_1}\sha \cdots\sha t_{j_s}$ into linear combinations of
Lyndon insertion shuffles.

Computer computation confirms that the space of periods on $\Mod_{0,7}$ is of
dimension $1$ and is generated by $\zeta(2)^2$. Indeed,
up to dihedral equivalence, there are six product maps on
$\Mod_{0,7}$, given by
\begin{equation}
\begin{cases}
(0,t_1,t_2,t_3,t_4,1,\infty)\mapsto (0,t_1,t_2,1,\infty)\times (0,t_3,t_4,1,
\infty)\\
(0,t_1,t_2,1,t_3,t_4,\infty)\mapsto
(0,t_1,t_2,1,\infty)\times(0,1,t_3,t_4,\infty)\\
(0,t_1,t_2,1,t_3,\infty,t_4)\mapsto
(0,t_1,t_2,1,\infty)\times(0,1,t_3,\infty,t_4)\\
(0,t_1,t_2,1,t_3,\infty,t_4)\mapsto
(0,t_1,1,t_3,\infty)\times(0,t_2,1,\infty,t_4)\\
(0,t_1,t_2,t_3,1,t_4,\infty)\mapsto
(0,t_1,t_2,1,\infty)\times(0,t_3,1,t_4,\infty)\\
(0,t_1,t_2,1,t_3,t_4,\infty)\mapsto
(0,t_1,1,t_3,\infty)\times(0,t_2,1,t_4,\infty)
\end{cases}
\end{equation}
Following the algorithm from section \ref{prodmaps}, we have six associated
relations between the integrals of the 22 cell-forms.   Then, explicitly
computing the dihedral action on the forms yields a further set of
linear equations, and it is a simple matter to solve the entire
system of equations to recover the 1-dimensional solution.  It also
provides the value of each integral of an insertion form as a rational
multiple of any given one; for instance all the values can be computed
as rational multiples of $\zeta(2)^2$.  In particular, we easily recover
the usual identities
$$\zeta(4)={{2}\over{5}}\zeta(2)^2,\ \
\zeta(3,1)={{1}\over{10}}\zeta(2)^2,\ \
\zeta(2,2)={{3}\over{10}}\zeta(2)^2,\ \
\zeta(2,1,1)={{2}\over{5}}\zeta(2)^2.$$

\vspace{.3cm}
\subsubsection{The cases $\Mod_{0,8}$ and $\Mod_{0,9}$}

There are 64 convergent $01$ cell-forms in on $\Mod_{0,8}$, and the
dimension of $H^5(\Mod^{\delta}_{0,8})$ is 144.  The remaining 80
forms are obtained by Lyndon insertion shuffles as follows:
\begin{itemize}
\item{44 forms obtained by making the four insertions
$(t_1\sha t_2,t_3,t_4,t_5)$, $(t_1,t_2\sha t_3,t_4,t_5)$,
$(t_1,t_2,t_3\sha t_4,t_5)$, $(t_1,t_2,t_3,t_4\sha t_5)$  into the
eleven $01$ cell-forms of $\Mod_{0,7}$}
\item{12 forms obtained by the six insertion possibilities
$(t_1\sha t_2\sha t_3,t_4,t_5)$, $(t_2\sha t_1t_3,t_4,t_5)$,
$(t_1,t_2\sha t_3\sha t_4,t_5)$, $(t_1,t_3\sha t_2t_4,t_5)$,
$(t_1,t_2,t_3\sha t_4\sha t_5)$, $(t_1,t_2,t_4\sha t_3t_5)$ into the
two $01$ cell-forms of $\Mod_{0,6}$}
\item{6 forms obtained by the three insertion possibilities
$(t_1\sha t_2,t_3\sha t_4,t_5)$, $(t_1\sha t_2,t_3,t_4\sha t_5)$,
$(t_1,t_2\sha t_3,t_4\sha t_5)$
into the two $01$ cell-forms of $\Mod_{0,6}$}
\item{4 forms obtained by the four insertions
$(t_1\sha t_2\sha t_3,t_4\sha t_5)$,
$(t_2\sha t_1t_3,t_4\sha t_5)$,
$(t_1\sha t_2, t_3\sha t_4\sha t_5)$,
$(t_1\sha t_2, t_4\sha t_3 t_5)$
into the single $01$ cell-form of $\Mod_{0,5}$}
\item{14 forms obtained by the fourteen insertions
$(t_1t_3\sha t_2t_4,t_5)$
$(t_3\sha t_1t_4t_2,t_5)$
$(t_1t_3\sha t_2\sha t_4,t_5)$
$(t_1t_4\sha t_2\sha t_3,t_5)$
$(t_2t_4\sha t_1\sha t_3,t_5)$
$(t_2\sha t_1(t_3\sha t_4),t_5)$
$(t_1\sha t_2\sha t_3\sha t_4,t_5)$
$(t_1,t_2t_4\sha t_3t_5)$
$(t_1,t_4\sha t_2t_5t_3)$
$(t_1,t_2t_4\sha t_3\sha t_5)$
$(t_1,t_2t_5\sha t_3\sha t_4)$
$(t_1,t_3t_5\sha t_2\sha t_4)$
$(t_1,t_3\sha t_2(t_4\sha t_5))$
$(t_1,t_2\sha t_3\sha t_4\sha t_5)$
into the single $01$ cell-form of $\Mod_{0,5}$.}
\end{itemize}

The case of $\Mod_{0,9}$ is too large to give explicitly.  There are
461 convergent $01$ cell-forms, and
dim$\,H^6(\Mod^{\delta}_{0,9})=1089$. An interesting phenomenon
occurs first in the case $\Mod_{0,9}$; namely, this is the first
value of $n$ for which convergent (but not $01$) cell-forms do not
generate the cohomology.  The 1463 convergent cell-forms for
$\Mod_{0,9}$ generate a subspace of dimension 1088.

For $5\le n\le 9$, computer computations have confirmed the
main conjecture, namely:  {\it for $n\le 9$, the weight $n-3$ part
${\cal FC}_{n-3}$ of the formal cell-zeta algebra ${\cal FC}$ is
of dimension $d_{n-3}$, where $d_n$ is given by the Zagier formula
$d_n=d_{n-2}+d_{n-3}$ with $d_0=1$, $d_1=0$, $d_2=1$.}

\end{document}